\documentclass[largeformat,11pt]{interact}

\usepackage{epstopdf}
\usepackage[caption]{subfig}

\usepackage[numbers,sort&compress]{natbib}
\bibpunct[, ]{[}{]}{,}{n}{,}{,}
\makeatletter
\def\NAT@def@citea{\def\@citea{\NAT@separator}}
\makeatother

\usepackage{algorithm}
\usepackage{color}
\usepackage{algorithmic}
\theoremstyle{plain}
\newtheorem{theorem}{Theorem}[section]
\newtheorem{lemma}[theorem]{Lemma}
\newtheorem{corollary}[theorem]{Corollary}

\theoremstyle{definition}
\newtheorem{definition}[theorem]{Definition}
\newtheorem{assumption}{Assumption}
\newtheorem{example}[theorem]{Example}

\theoremstyle{remark}
\newtheorem{remark}{Remark}

\usepackage{url}
\usepackage{convex}
\usepackage{accents}
\begin{document}


\title{Parametric analysis of semidefinite optimization}

\author{
\name{Ali Mohammad-Nezhad\thanks{CONTACT Ali Mohammad-Nezhad. Email: alm413@lehigh.edu; Tel.: +1(610)758-2903.} and Tam\'as Terlaky}
\affil{\textsuperscript{a}Department of Industrial and Systems Engineering, Lehigh University, Harold S. Mohler Laboratory, 200 West Packer Avenue, Bethlehem, PA 18015-1582}
}

\maketitle

\begin{abstract}
In this paper, we study parametric analysis of semidefinite optimization problems w.r.t. the perturbation of the objective function. We study the behavior of the optimal partition and optimal set mapping on a so-called nonlinearity interval. Furthermore, we investigate the sensitivity of the approximation of the optimal partition in a nonlinearity interval, which has been recently studied by Mohammad-Nezhad and Terlaky. The approximation of the optimal partition was obtained from a bounded sequence of interior solutions on, or in a neighborhood of the central path. We derive an upper bound on the distance between the approximations of the optimal partitions of the original and perturbed problems. Finally, we examine the theoretical bounds by way of experimentation.
\end{abstract}

\begin{keywords}
Parametric semidefinite optimization; Optimal partition; Nonlinearity interval; Maximally complementary solution
\end{keywords}

\begin{amscode}
90C51, 90C22, 90C25 
\end{amscode}

\section{Introduction}\label{intro}
In this paper, we study parametric analysis of a semidefinite optimization (SDO) problem in which the objective function is perturbed along a fixed direction. Mathematically, we consider
 \begin{align*}
&(\mathrm{P_{\epsilon}}) \qquad \min\!\Big \{\langle C + \epsilon \bar{C},  X \rangle \mid \langle A^{i} , X \rangle=b_i, \quad  i=1,\ldots, m, \ X \succeq 0 \Big \},\\[-1\jot]
&(\mathrm{D_{\epsilon}}) \qquad \max\!\bigg \{b^Ty  \mid  \sum_{i=1}^m y_i A^i+S=C + \epsilon \bar{C}, \ S \succeq 0, \ y \in \mathbb{R}^m \bigg \},
 \end{align*}
where $C,X,A^i \in \mathbb{S}^n$ for $i=1,\ldots, m$, $b \in \mathbb{R}^m$, $\bar{C} \in \mathbb{S}^n$ is a fixed direction, and $\mathbb{S}^n$ denotes the vector space of symmetric $n \times n$ matrices endowed with the inner product $\langle C,X \rangle:=\trace(CX)$. The optimal value of $(\mathrm{P_{\epsilon}})$ yields the optimal value function $v:\mathbb{R} \to \mathbb{R}\cup\{-\infty,+\infty\}$. In this context, $X \succeq 0$ means that $X$ belongs to the cone of positive semidefinite matrices, which is denoted by $\mathbb{S}_+^{n}$. 

\vspace{5px}
\noindent
In order to guarantee zero duality gap and attainment of the optimal values, see e.g.,~\cite{BS00}, we make the following assumptions throughout this paper:
\begin{assumption}\label{ass1}
The coefficient matrices $A^1,\ldots,A^m$ are linearly independent.
\end{assumption}
\begin{assumption}\label{ass2}
The interior point condition holds at $\epsilon = 0$, i.e., there exists a strictly feasible solution $\big(X^{\circ}(0),y^{\circ}(0),S^{\circ}(0)\big)$ with $X^{\circ}(0),S^{\circ}(0) \succ 0$, where $\succ 0$ means positive definite.
 \end{assumption}

\vspace{5px}
\noindent
Let $\mathcal{E}\subseteq \mathbb{R}$ be the set of all $\epsilon$ for which $v(\epsilon) > -\infty$. By Assumption~\ref{ass2}, $\mathcal{E}$ is nonempty and non-singleton, since both $(\mathrm{P_{\epsilon}})$ and $(\mathrm{D_{\epsilon}})$ have strictly feasible solutions for all $\epsilon$ in a sufficiently small neighborhood of $0$. Further, $v(\epsilon)$ is a proper concave function on $\mathcal{E}$, and $\mathcal{E}$ is a closed, possibly unbounded, interval, see e.g., Lemma 2.2 in~\cite{BJRT96}. The continuity of $v(\epsilon)$ on $\interior(\mathcal{E})$ follows from its concavity on $\mathcal{E}$, see Corollary 2.109 in~\cite{BS00}.

\vspace{5px}
\noindent
The primal and dual optimal set mappings are defined as
\begin{align*}
&\mathcal{P}^*(\epsilon):=\big\{X \mid \langle C + \epsilon \bar{C}, X \rangle = v(\epsilon), \ \langle A^{i} , X \rangle=b_i, \ \  i=1,\ldots, m, \ X \succeq 0 \big\}, \\[-1\jot]
&\mathcal{D}^*(\epsilon):=\big \{(y,S) \mid b^T y = v(\epsilon), \ \sum_{i=1}^m y_i A^i+S=C + \epsilon \bar{C}, \ S \succeq 0\big \}.
\end{align*}
\noindent
Our analysis relies on the existence of central path and maximally complementary solutions~\cite{Kl02}, as formally defined below.
\begin{definition}
We call an optimal solution $\big(X^*(\epsilon),y^*(\epsilon),S^*(\epsilon)\big)$ maximally complementary if 
\begin{align*}
X^*(\epsilon) \in \ri\big(\mathcal{P}^*(\epsilon)\big), \quad \text{and} \quad \big(y^*(\epsilon),S^*(\epsilon)\big) \in \ri\big(\mathcal{D}^*(\epsilon)\big),
\end{align*}
where $\ri(.)$ denotes the relative interior of a convex set. An optimal solution $\big(X^*(\epsilon),y^*(\epsilon),S^*(\epsilon)\big)$ is called strictly complementary if
\begin{align*}
X^*(\epsilon)+S^*(\epsilon) \succ 0.
\end{align*}
\end{definition}
\noindent
As a result of a theorem of the alternative~\cite{CSW13}, Assumption~\ref{ass2} implies the interior point condition at every $\epsilon \in \interior(\mathcal{E})$, see Lemma 3.1 in~\cite{GS99}. Therefore, strong duality%
\footnote{Strong duality in this paper means that both the primal and dual problems admit optimal solutions with equal objective values.} holds, and both $\mathcal{P}^*(\epsilon)$ and $\mathcal{D}^*(\epsilon)$ are nonempty and compact for all $\epsilon \in \interior(\mathcal{E})$, see e.g., Theorem 5.81 in~\cite{BS00}. Consequently, for every $\epsilon \in \interior(\mathcal{E})$ there exists a maximally complementary solution, and an optimal solution $\big(X(\epsilon),y(\epsilon),S(\epsilon)\big)$ satisfies the complementarity condition $X(\epsilon) S(\epsilon)=0$. It is known that for an SDO problem, and in general a linear conic optimization problem, there might be no strictly complementary solution. 

\subsection{Related works}
Steady advances in computational optimization have enabled us to solve a wide variety of SDO problems in polynomial time. Nevertheless, sensitivity analysis tools are still the missing parts of SDO solvers, e.g., interior point methods (IPMs) in SeDuMi~\cite{St99}, SDPT3~\cite{TTT99,TTT03}, and MOSEK%
\footnote{\url{https://www.mosek.com/}}. Shapiro~\cite{S97} established the differentiability of the optimal solution for a nonlinear SDO problem using the standard implicit function theorem. Under linear perturbations in the objective vector, the coefficient matrix, and the right hand side, Nayakkankuppam and Overton~\cite{NO99} derived the region of stability around an optimal solution of SDO which satisfies the strict complementarity and nondegeneracy conditions. The sensitivity of central solutions for SDO was considered in~\cite{FN01,St01}. Based on IPMs, Yildirim and Todd~\cite{Y2001} proposed a sensitivity analysis approach for linear optimization (LO) and SDO. Recently, Cheung and Wolkowicz~\cite{CW14} and Sekiguchi and Waki~\cite{SW16} studied the continuity of the optimal value function for SDO problems, which fail the interior point condition. A comprehensive study on sensitivity and stability of nonlinear optimization problems was given by Bonnans and Shapiro~\cite{BS00}. The results  are mostly valid in a neighborhood of a given optimal solution, and they depend on strong second-order sufficient conditions. We refer the reader to~\cite{Fi83} for a survey of classical results. 

\vspace{5px}
\noindent
 Adler and Monteiro~\cite{AM92} studied the parametric analysis of LO problems using the concept of optimal partition. Another treatment of sensitivity analysis for LO based on the optimal partition approach was given by Jansen et al.~\cite{JRT93} and Greenberg~\cite{G94}. Berkelaar et al.~\cite{BJRT96,BRT97} extended the optimal partition approach to linearly constrained quadratic optimization (LCQO) with perturbation in the right hand side vector and showed that the optimal value function is convex and piecewise quadratic. There have been further studies on optimal partition and parametric analysis of conic optimization problems. In contrast to LO, the optimal partition of SDO is defined as a 3-tuple of mutually orthogonal subspaces of $\mathbb{R}^n$, see Section~\ref{optimal_partition_definition}. Goldfarb and Scheinberg~\cite{GS99} considered a parametric SDO problem, where the objective is perturbed along a fixed direction. They derived auxiliary problems to compute the directional derivatives of the optimal value function and the so-called invariancy set of the optimal partition. Yildirim~\cite{Y2004} extended the concept of the optimal partition and the auxiliary problems in~\cite{GS99} for linear conic optimization problems. 

\subsection{Contributions}
To the best of our knowledge, for the past twenty years, the regularity and stability of the trajectory of the optimal set mapping for conic optimization problems have received very little attention. In this paper, we take an initial step to fill this gap by revisiting parametric analysis of SDO problems. Our main goal is to elaborate on the optimal partition approach given in~\cite{GS99} for a parametric SDO problem. To that end, we introduce the concepts of nonlinearity interval and transition point for the optimal partition, and provide sufficient conditions for the existence of a nonlinearity interval and a transition point, see Theorems~\ref{nonlinearity_continuity} and~\ref{nonsingularity_sufficient}. On a nonlinearity interval, the rank of both $X^*(\epsilon)$ and $S^*(\epsilon)$ are invariant w.r.t. $\epsilon$, while a transition point is a singleton invariancy set, which does not belong to a nonlinearity interval. Further, we quantify the sensitivity of the approximation of the optimal partition given in~\cite{MT19}, see Theorems~\ref{Stewart_eigenspace_theorem} and~\ref{existence_Q_central_solutions}. In~\cite{MT19}, the approximation of the optimal partition was obtained from the eigenvectors of interior solutions on, or in a neighborhood of the central path. Such interior solutions are usually generated by a feasible primal-dual IPM, whose accumulation points belong to the relative interior of the optimal set. We conclude the paper with numerical experiments and directions for future research.

\vspace{5px}
\noindent
Roughly speaking, our main contributions are
\begin{itemize}
\item Theoretical characterization of nonlinearity intervals and transition points of the optimal partition;
\item Upper bounds for the sensitivity of the approximation of the optimal partition in a nonlinearity interval.
\end{itemize}

\vspace{5px}
\noindent
Along with an invariancy set, a nonlinearity interval can be construed as a stability region and its identification has a high impact on the post-optimal analysis of SDO problems, see e.g.,~\cite{CAPT17,CHS18}, in which the stability region of rank-one primal optimal solutions is of interest. Interestingly, the optimal value function for SDO problems has been shown to be piecewise algebraic~\cite{N10}, i.e., for each piece there exists a polynomial function $\Psi(.,.)$ so that $\Psi(v(\epsilon),\epsilon) = 0$. Further characterization of nonlinear pieces of $v(\epsilon)$ can be obtained by first identifying the nonlinearity intervals of the optimal partition.

\subsection{Organization of the paper}\label{organization}
The rest of this paper is organized as follows. In Section~\ref{optimal_partition}, we concisely review the concepts of the nondegeneracy and optimal partition for SDO. In Section~\ref{sensitivity_partition}, we review the concept of an invariancy set from~\cite{GS99} and prove additional results. Furthermore, we introduce the notions of a nonlinearity interval and a transition point and present the behavior of the optimal partition on a nonlinearity interval. In Section~\ref{perturbation_optimal_partition}, we investigate the sensitivity of the approximation of the optimal partition in a nonlinearity interval. In Section~\ref{experiments}, we provide numerical experiments to illustrate the numerical behavior of the central solutions and the optimal partition w.r.t. $\epsilon$. Our concluding remarks and topics for future studies are stated in Section~\ref{conclusion}.

\vspace{5px}
\noindent
\textbf{Notation:} Throughout this paper, for any given $\epsilon \in \interior(\mathcal{E})$, $\big(X(\epsilon),y(\epsilon),S(\epsilon)\big)$ denotes a primal-dual optimal solution, and a maximally complementary solution is denoted by $\big(X^*(\epsilon),y^*(\epsilon),S^*(\epsilon)\big)$. For a given matrix, $\mathcal{R}(.)$ stands for its column space, and $\Null(.)$ represents its null space. Given a symmetric matrix $X$, $\svectorize:\mathbb{S}^{n} \to \mathbb{R}^{n(n+1)/2}$ is a linear transformation, which multiplies the off-diagonal entries of a symmetric matrix by $\sqrt{2}$, and stacks the upper triangular part into a vector, i.e.,
\begin{align}
\svectorize(X):=\big(X_{11}, \sqrt{2}X_{12},\ldots, \sqrt{2}X_{1n}, X_{22}, \sqrt{2} X_{23},\ldots,\sqrt{2}X_{2n},\ldots, X_{nn}\big)^T, \label{linear_transformation_svec}
\end{align}
and $\svecinv\!:\mathbb{R}^{n(n+1)/2} \to \mathbb{S}^{n}$ is the inverse of $\svectorize(.)$. Furthermore, $\lambda_{[i]}(X)$ denotes the $i^{\mathrm{th}}$ largest eigenvalue of $X$. Thus, $\lambda_{\max}(X):=\lambda_{[1]}(X)$, $\lambda_{\min}(X):=\lambda_{[n]}(X)$, and $\Lambda(X)$ denotes the diagonal matrix of the eigenvalues of $X \in \mathbb{S}^n$. The Frobenius norm is denoted by $\|.\|$, and the $l_2$ norm and the induced $2$-norm (spectral norm) for the vectors and matrices are indicated by $\|.\|_2$. By $\distance(\mathcal{S}_1,\mathcal{S}_2)$ we mean the distance between two subspaces $\mathcal{S}_1$ and $\mathcal{S}_2$ of $\mathbb{R}^n$ with the same dimension, which is defined as
\begin{align}\label{metric}
\distance(\mathcal{S}_1,\mathcal{S}_2):=\big \|\proj_{\mathcal{S}_1}-\proj_{\mathcal{S}_2} \big \|_2,
\end{align} 
where $\proj_{\mathcal{S}_1}$ and $\proj_{\mathcal{S}_2}$ are the orthogonal projections onto the subspaces $\mathcal{S}_1$ and $\mathcal{S}_2$, respectively, see Section 2.5.3 in~\cite{GL13}. The notation $(.;.;\ldots;.)$ and $(.,.,\ldots,.)$ is adopted for the concatenation and side by side arrangement of column vectors, respectively. 
\section{Preliminaries}\label{optimal_partition}
For the sake of simplicity, we assume that $\epsilon = 0$, and define $(\mathrm{P}):=(\mathrm{P_{0}})$ and $(\mathrm{D}):=(\mathrm{D_{0}})$. We adopt this notation for an optimal solution, a maximally complementary solution, and central solutions as well. 

\subsection{Nondegeneracy conditions}\label{Prima_dual_nondeg} 
The primal and dual nondegeneracy conditions were studied for SDO in~\cite{AHO97}. Let $(X,y,S) \in \mathcal{P} \times \mathcal{D}$ be a primal-dual feasible solution. Consider the eigendecompositions 
\begin{align*}
X=M\Lambda(X)M^T, \qquad S=N\Lambda(S)N^T,
\end{align*}
where $M:=(M_1,M_2)$ and $N:=(N_1,N_2)$ are orthogonal matrices, and $M_1$ and $N_2$ correspond to the positive eigenvalues of $X$ and $S$, respectively. A primal feasible solution $X$ is called primal nondegenerate if the matrices
\begin{align*}
\begin{pmatrix} M^T_1 A^i M_1 & M^T_1 A^i M_2\\M^T_2 A^i M_1 & 0 \end{pmatrix} 
\end{align*}
for $i=1,\ldots,m$ are linearly independent in $\mathbb{S}^n$. A dual feasible solution $(y,S)$ is called dual nondegenerate if the matrices $N^T_1 A^i N_1$ for $i=1,\ldots,m$ span $\mathbb{S}^{n-\rank(S)}$. If there exists a primal nondegenerate (dual nondegenerate) optimal solution, then the dual (primal) optimal solution is unique. In case that the strict complementarity condition holds, then a unique primal (dual) optimal solution implies the existence of a dual (primal) nondegenerate optimal solution. The proofs can be found in~\cite{AHO97,Kl02}.
\subsection{Optimal partition of SDO}\label{optimal_partition_definition}
It is known~\cite{Kl02} that the optimal partition is well-defined under the interior point condition. Let $(X^*,y^*,S^*) \in \ri\big(\mathcal{P}^* \times \mathcal{D}^*\big)$ be a maximally complementary solution, where $X^*$ and $S^*$ have a common eigenvector basis due to the complementarity condition $X^*S^*=0$. The subspaces $\mathcal{R}(X^*)$ and $\mathcal{R}(S^*)$ are orthogonal by the complementarity condition, and those subspaces are spanned by the eigenvectors corresponding to the positive eigenvalues of $X^*$ and $S^*$, respectively. Let $\mathcal{B}:=\mathcal{R}(X^*)$, $\mathcal{N}:=\mathcal{R}(S^*)$, and $\mathcal{T}:=\big(\mathcal{R}(X^*) + \mathcal{R}(S^*)\big)^{\perp}$, where $\perp$ denotes the orthogonal complement of a subspace. Then $(\mathcal{B}, \mathcal{T},\mathcal{N})$ is called the \textit{optimal partition} of $(\mathrm{P})$ and $(\mathrm{D})$. Note that $\mathcal{T}=\{0\}$ if and only if a strictly complementary solution exists. Since $(X^*,y^*,S^*)$ has the highest rank in $\mathcal{P}^* \times \mathcal{D}^*$, see e.g., Lemma 2.3 in~\cite{Kl02}, the optimal partition is uniquely defined. We use the notation $Q:=\big(Q_{\mathcal{B}},Q_{\mathcal{T}}, Q_{\mathcal{N}}\big)$ to denote an orthonormal basis partitioned according to the subspaces $\mathcal{B}$, $\mathcal{T}$, and $\mathcal{N}$. 

\vspace{5px}
\noindent
Let us define $n_{\mathcal{B}}:=\dim\!\big(\mathcal{B}\big)$, $n_{\mathcal{N}}:=\dim\!\big(\mathcal{N}\big)$, and $n_{\mathcal{T}}:=\dim\!\big(\mathcal{T}\big)$. By the interior point condition, at least one of $n_{\mathcal{B}}$ or $n_{\mathcal{N}}$ has to be positive, see Remark 2 in~\cite{MT19}.
\begin{theorem}[Theorem 2.7 in~\cite{Kl02}]\label{optimal_sol_rep}
Any $\big(\check{X},\check{y},\check{S}\big) \in \mathcal{P}^* \times \mathcal{D}^*$ can be represented as
\begin{align*}
\check{X}=Q_{\mathcal{B}} U_{\check{X}}Q^T_{\mathcal{B}}, \qquad \check{S}=Q_{\mathcal{N}} U_{\check{S}} Q^T_{\mathcal{N}},
\end{align*}
with unique $U_{\check{X}} \succeq 0 $ and $U_{\check{S}} \succeq 0$. If $n_{\mathcal{B}} > 0$ and $\check{X} \in \ri(\mathcal{P}^*)$, then we have $U_{\check{X}} \succ 0$. Analogously, if $n_{\mathcal{N}} > 0$ and $\big(\check{y},\check{S}\big) \in  \ri\big(\mathcal{D}^*\big)$, then we have $U_{\check{S}} \succ 0$. \qed
\end{theorem}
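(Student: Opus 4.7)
The plan is to derive the decomposition by combining the complementarity condition with the maximality of the range of a maximally complementary solution, and then to handle uniqueness and the relative-interior refinement separately. The core observation I would exploit is that for positive semidefinite matrices $A,B \succeq 0$ one has $\mathcal{R}(A+B) = \mathcal{R}(A) + \mathcal{R}(B)$, so in particular $\mathcal{R}(A) \subseteq \mathcal{R}(A+B)$.

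First I would establish the range inclusions $\mathcal{R}(\check{X}) \subseteq \mathcal{B}$ and $\mathcal{R}(\check{S}) \subseteq \mathcal{N}$. Fix a maximally complementary solution $(X^*,y^*,S^*)$, so that $\mathcal{B} = \mathcal{R}(X^*)$ and $\mathcal{N} = \mathcal{R}(S^*)$. Since $X^* \in \ri(\mathcal{P}^*)$ and $\check{X} \in \mathcal{P}^*$, the definition of relative interior yields some $t>0$ for which $X_t := (1+t)X^* - t\check{X} \in \mathcal{P}^*$, so $X_t \succeq 0$. Rewriting as $X^* = \frac{1}{1+t}X_t + \frac{t}{1+t}\check{X}$ and invoking the PSD range identity gives $\mathcal{R}(\check{X}) \subseteq \mathcal{R}(X^*) = \mathcal{B}$. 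The identical argument, applied to the dual maximally complementary pair and $(\check{y},\check{S})$, yields $\mathcal{R}(\check{S}) \subseteq \mathcal{N}$.

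With these inclusions in hand, the decomposition follows by a standard change-of-basis argument. Since $\mathcal{R}(\check{X}) \subseteq \mathcal{R}(Q_{\mathcal{B}})$, the orthogonal projection $P_{\mathcal{B}} = Q_{\mathcal{B}} Q_{\mathcal{B}}^T$ onto $\mathcal{B}$ satisfies $P_{\mathcal{B}} \check{X} = \check{X}$, and by symmetry of $\check{X}$ also $\check{X} P_{\mathcal{B}} = \check{X}$; hence $\check{X} = Q_{\mathcal{B}} U_{\check{X}} Q_{\mathcal{B}}^T$ with $U_{\check{X}} := Q_{\mathcal{B}}^T \check{X} Q_{\mathcal{B}}$. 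The matrix $U_{\check{X}}$ is positive semidefinite by congruence from $\check{X}$, and uniqueness is immediate from $Q_{\mathcal{B}}^T Q_{\mathcal{B}} = I$, since any candidate representation must satisfy $Q_{\mathcal{B}}^T \check{X} Q_{\mathcal{B}} = U_{\check{X}}$. The corresponding statement for $\check{S} = Q_{\mathcal{N}} U_{\check{S}} Q_{\mathcal{N}}^T$ is symmetric.

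For the relative-interior refinement, the point is that all elements of $\ri(\mathcal{P}^*)$ share the maximal rank $n_{\mathcal{B}}$: if $\check{X} \in \ri(\mathcal{P}^*)$, the same argument with the roles of $\check{X}$ and $X^*$ reversed yields $\mathcal{R}(X^*) \subseteq \mathcal{R}(\check{X})$, hence $\mathcal{R}(\check{X}) = \mathcal{B}$ and $\rank(\check{X}) = n_{\mathcal{B}}$. Since $Q_{\mathcal{B}}$ has full column rank, $\rank(U_{\check{X}}) = \rank(\check{X}) = n_{\mathcal{B}}$, forcing the $n_{\mathcal{B}} \times n_{\mathcal{B}}$ PSD matrix $U_{\check{X}}$ to be positive definite; the same reasoning handles $U_{\check{S}}$. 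The main obstacle is really the first step, where the abstract relative-interior property has to be translated into the concrete range inclusion; once that is in hand, the remainder of the proof is essentially linear algebra of orthogonal bases and congruence transformations.
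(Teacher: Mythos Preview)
Your argument is correct. The paper does not supply its own proof of this statement; it is quoted verbatim as Theorem~2.7 from~\cite{Kl02} and closed with a \qed, so there is no in-paper proof to compare against. Your route---using the relative-interior overshoot to get $\mathcal{R}(\check{X})\subseteq\mathcal{R}(X^*)$ via the PSD range identity, then reading off the compressed representation from the projector $Q_{\mathcal{B}}Q_{\mathcal{B}}^T$---is essentially the standard proof one finds in the cited reference, and the uniqueness and relative-interior refinements are handled cleanly.
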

%
\subsection{Approximation of the optimal partition}\label{approximation_optimal_partition}
The central path is a smooth trajectory of solutions to
\begin{equation}\label{central_path_equations}
\begin{aligned}
\langle A^i , X \rangle&=b_i, \qquad   i=1,\ldots, m, & X &\succeq 0,\\[-1\jot]
\sum_{i=1}^m A^i y_i+S&=C, & S &\succeq 0,\\[-1\jot]
XS&=\mu I_n,
\end{aligned}
\end{equation}
where $I_n$ denotes the $n \times n$ identity matrix. For a given $\mu>0$, the unique solution of~\eqref{central_path_equations}, denoted by $\big(X^{\mu},y^{\mu},S^{\mu}\big)$, is called a central solution. The existence and uniqueness follow from Assumptions~\ref{ass1} and~\ref{ass2}, see Theorem 3.1 in~\cite{Kl02}. Furthermore, it is immediate from the nonlinear equations $XS=\mu I_n$ that $X^{\mu}$ and $S^{\mu}$ have a common eigenvector basis for every $\mu > 0$. 

\vspace{5px}
\noindent
The derivation of bounds in~\cite{MT19} for the vanishing blocks of $Q^T X^{\mu}Q$ and $Q^T S^{\mu}Q$ is based on a condition number $\sigma$ defined as 
\begin{align*}
   \sigma_{\mathcal{B}} &:=\begin{cases} \max\limits_{\check{X} \in \mathcal{P}^*} \lambda_{\min}\big(Q^T_{\mathcal{B}} \check{X}Q_{\mathcal{B}}\big),  \quad \ &\mathcal{B} \neq \{0\},\\ \infty,   &\mathcal{B}=\{0\}, \end{cases} \\
   \sigma_{\mathcal{N}} &:=\begin{cases} \max\limits_{(\check{y},\check{S}) \in \mathcal{D}^*} \lambda_{\min}\big(Q_{\mathcal{N}}^T\check{S}Q_{\mathcal{N}}\big), &\mathcal{N} \neq\{0\}, \\ \infty, &\mathcal{N} = \{0\}, \end{cases} \\
   \sigma &:= \min\{\sigma_{\mathcal{B}},\sigma_{\mathcal{N}}\}.
\end{align*}
\begin{remark}
Due to the interior point condition and the compactness of the optimal set, $\sigma$ is well-defined, see Lemma 3.1 in~\cite{MT19}. 
\end{remark}
\noindent
Furthermore, we need an error bound result for the following linear matrix inequality (LMI) systems 
\begin{equation}
\label{KKT_equivalent}
\begin{cases}
\mathcal{A}x=b,\\
 \svectorize(\check{S})^Tx = 0,\\
\svecinv(x) \succeq 0,
\end{cases}
\qquad
\begin{cases}
\mathcal{A}^Ty + s=\svectorize(C),\\
\svectorize(\check{X})^T s = 0,\\
\svecinv(s) \succeq 0,
\end{cases}
\end{equation}
where
 \begin{align}
\mathcal{A}:=\big(\svectorize(A^1),\ldots,\svectorize(A^m)\big)^T, \label{coefficient_transformation}
\end{align}
and $\big(\check{X},\check{y},\check{S}\big) \in \mathcal{P}^* \times \mathcal{D}^*$ is a primal-dual optimal solution. The LMIs given in~\eqref{KKT_equivalent} indeed define the set of primal and dual optimal solutions.  

\vspace{5px}
\noindent
By the orthogonal projection of $\svectorize\big(X^{\mu}\big)$ and $\svectorize\big(S^{\mu}\big)$ onto the affine subspaces 
\begin{align*}
\bar{\mathcal{L}}_{\mathcal{P}}&:=\!\Big \{x \in \mathbb{R}^{n(n+1)/2} \mid x \in \svectorize(\check{X}) + \Null(\mathcal{A}), \ \svectorize(\check{S})^Tx = 0 \Big \},\\
\bar{\mathcal{L}}_{\mathcal{D}}&:=\!\Big\{s \in  \mathbb{R}^{n(n+1)/2} \mid s \in \svectorize(\check{S}) + \mathcal{R}\big(\mathcal{A}^T\big), \ \svectorize(\check{X})^T s = 0 \Big \}, 
\end{align*}
 we get
\begin{align}
\distance\!\big(\svectorize\!\big(X^{\mu}\big),\bar{\mathcal{L}}_{\mathcal{P}}\big) \le \theta_1 n\mu,\quad \text{and} \quad \distance\!\big(\svectorize\!\big(S^{\mu}\big),\bar{\mathcal{L}}_{\mathcal{D}}\big) \le \theta_2n\mu, \label{Hoffman_upper_bound}
\end{align}
in which 
\begin{align*}
\distance\!\big(\svectorize\!\big(X^{\mu}\big), \bar{\mathcal{L}}_{\mathcal{P}} \big ):=\min\!\Big\{\big \|x - \svectorize\big(X^{\mu}\big)\big \|_2 \mid \mathcal{A}x=b, \  \svectorize(\check{S})^Tx = 0\Big\}
\end{align*}
stands for the distance of $\svectorize\big(X^{\mu}\big)$ from the affine subspace $\bar{\mathcal{L}}_{\mathcal{P}}$, and the condition numbers $\theta_1$ and $\theta_2$ depend on $\mathcal{A}$ and $\check{S}$, and $\mathcal{A}$ and $\check{X}$, respectively%
\footnote{The upper bounds in~\eqref{Hoffman_upper_bound} can be simply obtained by minimizing $\big\|x-\svectorize\big(X^{\mu}\big)\big\|^2_2$ and $\big\|s-\svectorize\big(S^{\mu}\big)\big\|^2_2$ on the affine subspaces $\bar{\mathcal{L}}_{\mathcal{P}}$ and $\bar{\mathcal{L}}_{\mathcal{D}}$, respectively, and using Lagrange multipliers method.}, see Section 3.2 in~\cite{MT19}. Interestingly, $\theta_1$ and $\theta_2$ can be considered as Hoffman~\cite{Hoff52} condition numbers. As a consequence, if 
\begin{align}
\mu \le \hat{\mu}:=\frac 1n \min\!\Big \{\theta_1^{-1},\theta_2^{-1} \Big \}, \label{mu_hat}
\end{align}
then it holds that 
\begin{align*}
\distance\!\big(\svectorize\!\big(X^{\mu}\big),\bar{\mathcal{L}}_{\mathcal{P}}\big) \le 1, \qquad \distance\!\big(\svectorize\!\big(S^{\mu}\big),\bar{\mathcal{L}}_{\mathcal{D}}\big) \le 1. 
 \end{align*}
 Lemma~\ref{Holderian_error_bound} is in order.
\begin{lemma}[Lemma 3.5 in~\cite{MT19}]\label{Holderian_error_bound}
Let a central solution $\big (X^{\mu},y^{\mu},S^{\mu} \big )$ be given, where $\mu$ satisfies~\eqref{mu_hat}. Then there exist a positive condition number $c$ independent of $\mu$ and a positive exponent $\gamma \ge 2^{1-n}$ such that
\begin{equation}\label{error_bounds}
\begin{aligned}
\distance\!\big(X^{\mu}, \svecinv\!\big(\bar{\mathcal{L}}_{\mathcal{P}}\big) \cap \mathbb{S}^n_+\big) \le c(n\mu)^{\gamma}, 
\qquad \distance\!\big(S^{\mu}, \svecinv\!\big(\bar{\mathcal{L}}_{\mathcal{D}}\big) \cap \mathbb{S}^n_+\big) \le c(n\mu)^{\gamma}.
\end{aligned}
\end{equation}
\end{lemma}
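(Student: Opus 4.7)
The plan is to split the distance $\distance\!\big(X^\mu,\svecinv(\bar{\mathcal{L}}_{\mathcal{P}})\cap\mathbb{S}^n_+\big)$ into two factors: how far $X^\mu$ is from the affine subspace $\svecinv(\bar{\mathcal{L}}_{\mathcal{P}})$, which~\eqref{Hoffman_upper_bound} already controls linearly in $\mu$, and how far a point that is \emph{both} PSD and almost in $\svecinv(\bar{\mathcal{L}}_{\mathcal{P}})$ can be from their exact intersection. The second factor is not Lipschitz---it is only H\"olderian, with the explicit exponent $2^{1-n}$ arising from the degeneracy of the PSD cone. The argument for $S^\mu$ is the symmetric mirror image.

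First, I would use that $\svectorize$ is a linear isometry from $(\mathbb{S}^n,\|\cdot\|)$ onto $(\mathbb{R}^{n(n+1)/2},\|\cdot\|_2)$ to translate~\eqref{Hoffman_upper_bound} into the Frobenius-norm estimate $\distance\!\big(X^\mu,\svecinv(\bar{\mathcal{L}}_{\mathcal{P}})\big)\le\theta_1 n\mu$, which is $\le 1$ since $\mu\le\hat{\mu}$. Second, I would observe that $\svecinv(\bar{\mathcal{L}}_{\mathcal{P}})\cap\mathbb{S}^n_+$ is the primal optimal set $\mathcal{P}^*$, a nonempty spectrahedron cut out by the equations $\langle A^i,X\rangle=b_i$, the complementarity equation $\langle\check{S},X\rangle=0$, and $X\succeq 0$, and invoke Sturm's H\"olderian error bound for semidefinite feasibility: there exist a constant $c_1>0$ and an integer $d$ with $0\le d\le n-1$ (the singularity degree of $\mathcal{P}^*$) such that for every $X\in\mathbb{S}^n_+$ in a bounded neighborhood of $\mathcal{P}^*$,
\begin{align*}
\distance\!\big(X,\svecinv(\bar{\mathcal{L}}_{\mathcal{P}})\cap\mathbb{S}^n_+\big)\le c_1\,\distance\!\big(X,\svecinv(\bar{\mathcal{L}}_{\mathcal{P}})\big)^{2^{-d}}.
\end{align*}
Applying this at $X=X^\mu$, setting $\gamma:=2^{-d}\ge 2^{1-n}$, and chaining with the first step yields $\distance\!\big(X^\mu,\svecinv(\bar{\mathcal{L}}_{\mathcal{P}})\cap\mathbb{S}^n_+\big)\le c_1(\theta_1 n\mu)^{\gamma}$; absorbing $c_1\theta_1^{\gamma}$ into a single constant $c$ gives the primal estimate. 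The dual estimate is produced the same way with $\theta_2$ and the dual singularity degree, and a common $c,\gamma$ are obtained by taking the maximum of the two constants and the minimum of the two exponents.

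The main obstacle is the uniform applicability of Sturm's error bound for all admissible $\mu$. Three points must be checked: (i) that $\mathcal{P}^*$ is nonempty, which is immediate from $\check{X}\in\mathcal{P}^*$; (ii) that the singularity degree of $\mathcal{P}^*$ is finite and at most $n-1$, a standard fact for spectrahedra in $\mathbb{S}^n_+$; and (iii) that the constant $c_1$ and the neighborhood can be chosen once-and-for-all so as to cover the entire central-path segment $\{X^\mu:0<\mu\le\hat{\mu}\}$. Point (iii) is handled by noting that $c_1$ is a structural property of the spectrahedron and that $X^\mu$ lives in a bounded region (the primal optimal set is compact under Assumption~\ref{ass2}, and $X^\mu$ stays bounded for $\mu\le\hat{\mu}$, e.g.\ by a uniform bound on $\trace(X^\mu)$). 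Once these items are in place, the two-step composition produces the claimed H\"olderian bound with $\gamma\ge 2^{1-n}$ and $c$ independent of $\mu$.
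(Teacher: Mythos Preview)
The paper itself does not prove this lemma; it is quoted verbatim from~\cite{MT19} and followed only by a remark pointing the reader to Section~3.2 of that reference for details on $c$ and $\gamma$. Your proposal is correct and is precisely the argument used in~\cite{MT19}: combine the linear Hoffman-type estimate~\eqref{Hoffman_upper_bound} for the distance of $X^\mu$ (resp.\ $S^\mu$) to the affine hull $\bar{\mathcal{L}}_{\mathcal{P}}$ (resp.\ $\bar{\mathcal{L}}_{\mathcal{D}}$) with Sturm's H\"olderian error bound for spectrahedra, which supplies the exponent $2^{-d}$ with $d$ the singularity degree, hence $\gamma\ge 2^{1-n}$.

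Two small tightenings. First, for point~(iii) you should state explicitly why the central-path segment $\{X^\mu:0<\mu\le\hat\mu\}$ is bounded: this follows from Assumption~\ref{ass2}, since pairing $X^\mu$ with the strictly feasible dual slack $S^\circ(0)$ gives $\langle X^\mu,S^\circ(0)\rangle=\langle X^\circ(0),S^\mu\rangle+n\mu-\langle X^\circ(0),S^\circ(0)\rangle$, which is uniformly bounded and controls $\trace(X^\mu)$ via $\lambda_{\min}(S^\circ(0))>0$. Second, when you merge the primal and dual estimates into a single pair $(c,\gamma)$, note that taking the smaller $\gamma$ is harmless precisely because $\mu\le\hat\mu$ forces $n\mu\theta_i\le 1$, so raising a number in $[0,1]$ to a smaller power only increases it; this is exactly why the hypothesis~\eqref{mu_hat} appears in the statement.
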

\begin{remark}
The reader is referred to Section 3.2 in~\cite{MT19} for further discussion of the exponent $\gamma$ and the condition number $c$.
\end{remark}

\vspace{5px}
\noindent
Using the condition number $\sigma$ and the error bounds in~\eqref{error_bounds}, Lemma~\ref{bounds_on_muCenters} specifies lower and upper bounds on the eigenvalues of $X^{\mu}$ and $S^{\mu}$. 
\begin{lemma}[Theorem 3.8 in~\cite{MT19}]\label{bounds_on_muCenters}
Let $\big(X^{\mu},y^{\mu},S^{\mu}\big)$ be given, where $\mu \le \hat{\mu}$. Then we have
\begin{align}
\lambda_{[n-i+1]}\big(S^{\mu}\big) &\le \frac{n \mu}{\sigma}, &  \lambda_{[i]}\big(X^{\mu}\big) &\ge \frac{\sigma}{n}, & i&=1,\ldots, n_{\mathcal{B}}, \label{B_partition}\\
 \lambda_{[n-i+1]}\big(X^{\mu}\big) &\le \frac{n \mu}{\sigma}, & \lambda_{[i]}\big(S^{\mu}\big) &\ge \frac{\sigma}{n},  & i&=1,\ldots, n_{\mathcal{N}},  \label{N_partition}\\
\frac{\mu}{c\sqrt{n}(n \mu)^{\gamma}} &\le   \lambda_{[i]}\big(X^{\mu}\big),\mkern-14mu &\lambda_{[n-i+1]}\big(S^{\mu}\big) &\le c\sqrt{n}(n \mu)^{\gamma}, &   i&=n_{\mathcal{B}}+1,\ldots, n_{\mathcal{B}}+n_{\mathcal{T}}. \label{T_cardinal}
\end{align}
\end{lemma}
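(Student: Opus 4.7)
The plan is to prove the three pairs of bounds separately, exploiting three ingredients: (i) the pointwise complementarity $X^{\mu}S^{\mu}=\mu I_n$, which in the shared eigenbasis of $X^{\mu}$ and $S^{\mu}$ forces $\lambda_{[i]}(X^{\mu})\lambda_{[n-i+1]}(S^{\mu})=\mu$, so every eigenvalue bound on one matrix immediately reciprocates into one on the other; (ii) the orthogonality identity $\trace\!\big((X^{\mu}-\check{X})(S^{\mu}-\check{S})\big)=0$ for any $(\check{X},\check{y},\check{S})\in\mathcal{P}^{*}\times\mathcal{D}^{*}$, which follows since $X^{\mu}-\check{X}\in\Null(\mathcal{A})$ while $S^{\mu}-\check{S}\in\mathcal{R}(\mathcal{A}^{T})$; and (iii) the Hölderian error bound of Lemma~\ref{Holderian_error_bound}.

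First I would prove~\eqref{B_partition}. Choose $\check{X}\in\mathcal{P}^{*}$ attaining the maximum in the definition of $\sigma_{\mathcal{B}}$ and any $\check{S}\in\mathcal{D}^{*}$. Theorem~\ref{optimal_sol_rep} gives $\check{X}=Q_{\mathcal{B}}U_{\check{X}}Q_{\mathcal{B}}^{T}$ with $U_{\check{X}}\succeq\sigma_{\mathcal{B}}I_{n_{\mathcal{B}}}$, so $\check{X}\succeq\sigma_{\mathcal{B}} Q_{\mathcal{B}}Q_{\mathcal{B}}^{T}$. Expanding the orthogonality identity and using $\trace(X^{\mu}S^{\mu})=n\mu$ and $\trace(\check{X}\check{S})=0$ yields $\trace(\check{X}S^{\mu})+\trace(X^{\mu}\check{S})=n\mu$. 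Since both summands are non-negative, $\trace(\check{X}S^{\mu})\le n\mu$, and by the PSD lower bound on $\check{X}$ this gives $\trace(Q_{\mathcal{B}}^{T}S^{\mu}Q_{\mathcal{B}})\le n\mu/\sigma_{\mathcal{B}}\le n\mu/\sigma$. The Ky Fan min-trace principle (the trace of $V^{T}S^{\mu}V$ over orthonormal $n_{\mathcal{B}}$-frames minorizes the sum of the $n_{\mathcal{B}}$ smallest eigenvalues of $S^{\mu}$) then forces $\lambda_{[n-i+1]}(S^{\mu})\le n\mu/\sigma$ for $i=1,\ldots,n_{\mathcal{B}}$, and reciprocating through the shared eigenbasis yields $\lambda_{[i]}(X^{\mu})\ge\sigma/n$. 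Bound~\eqref{N_partition} follows by the symmetric argument with a dual $\check{S}$ attaining $\sigma_{\mathcal{N}}$.

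For~\eqref{T_cardinal} I would invoke Lemma~\ref{Holderian_error_bound} to produce $\hat{S}^{*}\in\svecinv(\bar{\mathcal{L}}_{\mathcal{D}})\cap\mathbb{S}_{+}^{n}$ with $\|S^{\mu}-\hat{S}^{*}\|\le c(n\mu)^{\gamma}$. By Theorem~\ref{optimal_sol_rep}, $\hat{S}^{*}=Q_{\mathcal{N}}U\,Q_{\mathcal{N}}^{T}$, so $\rank(\hat{S}^{*})\le n_{\mathcal{N}}$ and $\lambda_{[j]}(\hat{S}^{*})=0$ for all $j\ge n_{\mathcal{N}}+1$. Weyl's perturbation inequality, combined with $\|\cdot\|_{2}\le\|\cdot\|$, then gives $\lambda_{[j]}(S^{\mu})\le c(n\mu)^{\gamma}$ for $j=n_{\mathcal{N}}+1,\ldots,n_{\mathcal{N}}+n_{\mathcal{T}}$, which is precisely the range $\lambda_{[n-i+1]}(S^{\mu})$ for $i$ in the $\mathcal{T}$ block; the $\sqrt{n}$ factor in the stated bound comes in when one translates a Frobenius-style bound on $Q_{\mathcal{T}}^{T}S^{\mu}Q_{\mathcal{T}}$ into individual eigenvalue bounds through the inequality $\|M\|_{1}\le\sqrt{\rank(M)}\,\|M\|_{F}$. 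The lower bound on $\lambda_{[i]}(X^{\mu})$ is then immediate by reciprocation via $\lambda_{[i]}(X^{\mu})=\mu/\lambda_{[n-i+1]}(S^{\mu})$.

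The main obstacle I expect is not conceptual but indexing and norm book-keeping: one has to match the ordering of eigenvalues in the common eigenbasis of $X^{\mu}$ and $S^{\mu}$ carefully so that the $\mathcal{B}$, $\mathcal{T}$, and $\mathcal{N}$ blocks line up with indices $1,\ldots,n_{\mathcal{B}}$, $n_{\mathcal{B}}{+}1,\ldots,n_{\mathcal{B}}{+}n_{\mathcal{T}}$, and $n_{\mathcal{B}}{+}n_{\mathcal{T}}{+}1,\ldots,n$ respectively; verify that the choice of $\check{X}$ (resp.\ $\hat{S}^{*}$) in each step is consistent with an accompanying optimal dual (resp.\ primal) needed to drive the orthogonality identity; and track norm conversions tightly enough to recover the $c\sqrt{n}(n\mu)^{\gamma}$ constant rather than merely a generic $O((n\mu)^{\gamma})$.
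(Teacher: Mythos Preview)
The paper does not supply its own proof of this lemma; it quotes the result verbatim as Theorem~3.8 of~\cite{MT19} and moves on, so there is nothing in the present paper to compare your argument against line by line. That said, your proposal is the standard route and is correct: the trace identity $\trace(\check{X}S^{\mu})+\trace(X^{\mu}\check{S})=n\mu$ together with the extremal optimal solutions realizing $\sigma_{\mathcal{B}}$ and $\sigma_{\mathcal{N}}$ and the Ky Fan inequality gives~\eqref{B_partition}--\eqref{N_partition}, and Lemma~\ref{Holderian_error_bound} combined with Weyl's inequality gives~\eqref{T_cardinal}.

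Two small remarks. First, in your Ky Fan step the word ``minorizes'' is backwards: for any orthonormal $n_{\mathcal{B}}$-frame $V$ one has $\trace(V^{T}S^{\mu}V)\ge\sum_{i=1}^{n_{\mathcal{B}}}\lambda_{[n-i+1]}(S^{\mu})$, which is the direction you actually need. Second, your Weyl argument in fact yields the sharper bound $\lambda_{[n-i+1]}(S^{\mu})\le c(n\mu)^{\gamma}$ without any $\sqrt{n}$, since the spectral norm is dominated by the Frobenius norm; you should not try to manufacture the extra $\sqrt{n}$, which presumably reflects a slightly different (and looser) bookkeeping in~\cite{MT19}.
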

\noindent
Consider the eigendecompositions $X^{\mu}=Q^{\mu} \Lambda\big(X^{\mu}\big) (Q^{\mu})^T$ and $S^{\mu}=Q^{\mu} \Lambda\big(S^{\mu}\big) (Q^{\mu})^T$, in which $Q^{\mu}$ is a common orthonormal eigenvector basis. One can observe  from~\eqref{B_partition} to~\eqref{T_cardinal} that if $\mu$ is so small that the intervals $\big[\mu/(c\sqrt{n}(n \mu)^{\gamma}), \ c\sqrt{n}(n \mu)^{\gamma} \big]$, $(0,n\mu/\sigma]$, and $[\sigma/n, \infty)$ do not overlap, then we can partition the columns of $Q^{\mu}$ into $Q^{\mu}_{\mathcal{B}}$, $Q^{\mu}_{\mathcal{T}}$, and $Q^{\mu}_{\mathcal{N}}$, where the accumulation points of $Q^{\mu}_{\mathcal{B}}$, $Q^{\mu}_{\mathcal{T}}$, and $Q^{\mu}_{\mathcal{N}}$ form orthonormal bases for $\mathcal{B}$, $\mathcal{T}$, and $\mathcal{N}$, see also Remark~\ref{convergence_subspaces}. This holds if $\mu$ satisfies
\begin{align*}
\frac{\mu}{c\sqrt{n}(n \mu)^{\gamma}} \le c\sqrt{n}(n\mu)^{\gamma},  \quad \frac{n\mu}{\sigma} <  \frac{\mu}{c\sqrt{n}(n \mu)^{\gamma}}, \quad c\sqrt{n}(n\mu)^{\gamma} < \frac{\sigma}{n}, \quad \frac{n\mu}{\sigma} < \frac{\sigma}{n}, 
\end{align*}
or equivalently
\begin{align}\label{upper_bound_partition}
\mu < \tilde{\mu}:=\min\!\bigg\{\frac{1}{n} \bigg (\frac{\sigma}{cn^{\frac32}} \bigg)^{\frac{1}{\gamma}}, \ \frac{\sigma^2}{n^2}, \ \hat{\mu}\bigg\}.
\end{align}
\begin{remark}\label{rem:optimal_partition_approximations}
For a fixed $\mu$ with $\mu < \tilde{\mu}$, we refer to $\mathcal{R}\big(Q^{\mu}_{\mathcal{B}}\big)$, $\mathcal{R}\big(Q^{\mu}_{\mathcal{T}}\big)$, and $\mathcal{R}\big(Q^{\mu}_{\mathcal{N}}\big)$ as approximations of $\mathcal{B}$, $\mathcal{T}$, and $\mathcal{N}$, respectively.
\end{remark}
\begin{remark}
The identification of $\big(Q_{\mathcal{B}}^{\mu},Q_{\mathcal{T}}^{\mu},Q_{\mathcal{N}}^{\mu}\big)$ without the explicit knowledge of the condition numbers is discussed in~\cite{MT19,MT19a}.
\end{remark}

\section{Sensitivity of the optimal partition}\label{sensitivity_partition}
In this section, we investigate the behavior of the optimal partition and the optimal set mapping under perturbation of the objective vector. From now on, 
\begin{align*}
\pi(\epsilon):=\big(\mathcal{B}(\epsilon),\mathcal{T}(\epsilon),\mathcal{N}(\epsilon)\big)
\end{align*}
denotes the optimal partition of $(\mathrm{P_{\epsilon}})$ and $(\mathrm{D_{\epsilon}})$ for a given $\epsilon$ and 
\begin{align*}
Q_{\epsilon}:=\big(Q_{\mathcal{B}(\epsilon)},Q_{\mathcal{T}(\epsilon)}, Q_{\mathcal{N}(\epsilon)}\big)
\end{align*}
denotes an orthonormal basis partitioned according to the subspaces $\mathcal{B}(\epsilon)$, $\mathcal{T}(\epsilon)$, and $\mathcal{N}(\epsilon)$. We introduce and characterize the subintervals of $\interior(\mathcal{E})$ on which the optimal partition or the dimensions of both $\mathcal{B}(\epsilon)$ and $\mathcal{N}(\epsilon)$ are invariant of $\epsilon$. The discussion is motived by minimizing a parametric objective function on the 3-elliptope:
 \begin{figure}[]
\begin{center}
\includegraphics[height=2in]{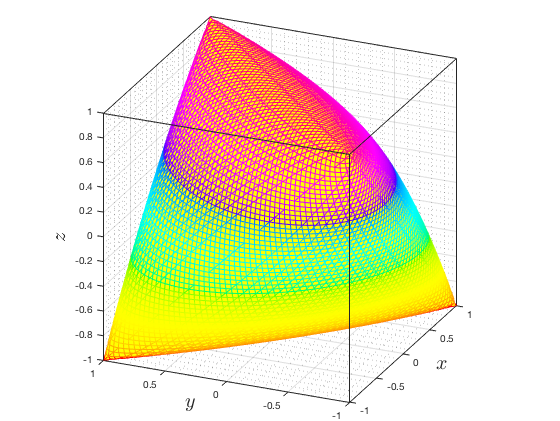}
\caption{The illustration of a 3-elliptope.}
\label{fig:3elliptope}
\end{center}
\end{figure}
\begin{align}
\mathcal{E}\ell\ell_3:=\Bigg\{(x,y,z)\in\mathbb{R}^3 \Bigg | \begin{pmatrix} 1 & \ x & \ y\\x & \ 1 & \ z\\y & \ z & \ 1 \end{pmatrix} \succeq 0\Bigg\}. \label{feasible_3elliptope}
\end{align}

\begin{example}\label{motivation_nonlinearity}
Consider the following SDO problem:
\begin{align*}
A^1&=\begin{pmatrix} 1 & 0 & 0\\0 & 0  & 0\\0 & 0 & 0\end{pmatrix}, & A^2&=\begin{pmatrix} 0 & 0 & 0\\0 & 1  & 0\\0 & 0 & 0\end{pmatrix}, & A^3&=\begin{pmatrix} 0 & 0 & 0\\0 & 0  & 0\\0 & 0 & 1\end{pmatrix}, \\
C&=\begin{pmatrix} \ \ 0 & -1 & \ \ 1\\ -1 & \ \ 0  & -1\\ \ \ 1 & -1 & \ \ 0\end{pmatrix}, & \bar{C}&=\begin{pmatrix} \ \ 0 & \ 2 & -2\\ \ \ 2 & \ 0  & \ \ 0\\-2 & \ 0 & \ \ 0\end{pmatrix}, & b&=(1, \ 1, \ 1)^T,
\end{align*}
where $\mathcal{E}=\mathbb{R}$, since the feasible set~\eqref{feasible_3elliptope} is compact as illustrated in Figure~\ref{fig:3elliptope}. 
\noindent
The optimal partition at $\epsilon$ is given by
\begin{align*}
\mathcal{B}(\epsilon)&=\mathcal{R}\Bigg(\begin{pmatrix} -\frac{1}{\sqrt{3}} \\ -\frac{1}{\sqrt{3}} \\ \ \ \frac{1}{\sqrt{3}} \end{pmatrix} \Bigg), &
\mathcal{T}(\epsilon)&=\{0\}, &
\mathcal{N}(\epsilon)&=\mathcal{R}\Bigg(\begin{pmatrix} 0 & \ \ \frac{2}{\sqrt{6}} \\ \frac{1}{\sqrt{2}} & -\frac{1}{\sqrt{6}}\\ \frac{1}{\sqrt{2}} & \ \ \frac{1}{\sqrt{6}} \end{pmatrix}\Bigg), & \epsilon &< -\frac12,\\
\mathcal{B}(\epsilon)&=\mathcal{R}\Bigg(\begin{pmatrix} -\frac{1}{\sqrt{3}} \\ -\frac{1}{\sqrt{3}} \\ \ \ \frac{1}{\sqrt{3}} \end{pmatrix} \Bigg), &
\mathcal{T}(\epsilon)&=\mathcal{R}\Bigg(\begin{pmatrix} 0  \\ \frac{1}{\sqrt{2}} \\ \frac{1}{\sqrt{2}} \end{pmatrix}\Bigg), &
\mathcal{N}(\epsilon)&=\mathcal{R}\Bigg(\begin{pmatrix} \ \ \frac{2}{\sqrt{6}} \\ -\frac{1}{\sqrt{6}} \\  \ \ \frac{1}{\sqrt{6}} \end{pmatrix}\Bigg), & \epsilon &= -\frac12,\\
\mathcal{B}(\epsilon)&=\mathcal{R}\Bigg(\begin{pmatrix} \ \ \frac{1}{\sqrt{3}} \\ -\frac{1}{\sqrt{3}} \\ \ \ \frac{1}{\sqrt{3}} \end{pmatrix} \Bigg), &
\mathcal{T}(\epsilon)&=\mathcal{R}\Bigg(\begin{pmatrix} 0  \\ \frac{1}{\sqrt{2}} \\ \frac{1}{\sqrt{2}} \end{pmatrix}\Bigg), &
\mathcal{N}(\epsilon)&=\mathcal{R}\Bigg(\begin{pmatrix} -\frac{2}{\sqrt{6}} \\ -\frac{1}{\sqrt{6}}\\ \ \ \frac{1}{\sqrt{6}} \end{pmatrix}\Bigg), & \epsilon &= \frac32,\\
\mathcal{B}(\epsilon)&=\mathcal{R}\Bigg(\begin{pmatrix} \ \ \frac{1}{\sqrt{3}} \\ -\frac{1}{\sqrt{3}} \\ \ \ \frac{1}{\sqrt{3}} \end{pmatrix} \Bigg), &
\mathcal{T}(\epsilon)&=\{0\}, &
\mathcal{N}(\epsilon)&=\mathcal{R}\Bigg(\begin{pmatrix} 0 &  -\frac{2}{\sqrt{6}} \\ \frac{1}{\sqrt{2}} & -\frac{1}{\sqrt{6}}\\ \frac{1}{\sqrt{2}} & \ \ \frac{1}{\sqrt{6}} \end{pmatrix}\Bigg), & \epsilon &> \frac32, 
\end{align*}
while for all $\epsilon \in (-\frac12,\frac32)$ we have
\begin{equation}\label{optimal_partition_nonlinearity}
\begin{aligned}
\begingroup 
\setlength\arraycolsep{1pt}
\mathcal{B}(\epsilon)=\mathcal{R}\Bigg(\begin{pmatrix} 0 & \lim_{\epsilon' \downarrow \epsilon}  \frac{2\mathrm{sgn}(2\epsilon'-1)}{\sqrt{2(2\epsilon'-1)^2+4}}\\\frac{1}{\sqrt{2}} & \frac{-|2\epsilon-1|}{\sqrt{2(2\epsilon-1)^2+4}} \\ \frac{1}{\sqrt{2}} & \frac{|2\epsilon-1|}{\sqrt{2(2\epsilon-1)^2+4}} \end{pmatrix} \Bigg), 
\endgroup  
\ \
\mathcal{T}(\epsilon)=\{0\}, \ \
\mathcal{N}(\epsilon)=\mathcal{R}\Bigg(\begin{pmatrix} \frac{(1-2\epsilon)}{\sqrt{(2\epsilon-1)^2+2}}\\\frac{-1}{\sqrt{(2\epsilon-1)^2+2}}\\\frac{1}{\sqrt{(2\epsilon-1)^2+2}} \end{pmatrix}\Bigg), 
\end{aligned}
\end{equation}
in which $\mathrm{sgn}(.)$ denotes the signum function. We can observe that, while being invariant w.r.t. $\epsilon$ in $(-\infty,-\frac12)$ and $(\frac32,\infty)$, the optimal partition changes with $\epsilon$ in $(-\frac12,\frac32)$, even though both $\rank\!\big(X^*(\epsilon)\big)$ and $\rank\!\big(S^*(\epsilon)\big)$ remain constant. A strictly complementary solution $\big(X^*(\epsilon),y^*(\epsilon),S^*(\epsilon)\big)$ exists for all $\epsilon \in (-\frac12,\frac32)$, and it is given by
\begin{align*}
X^*(\epsilon)&=
\begingroup 
\setlength\arraycolsep{.5pt}
\begin{pmatrix} 1 & \ \frac12 - \epsilon & \ \epsilon - \frac12 \\ \frac12-\epsilon & \ 1 & \ 1-2(\epsilon-\frac12)^2\\ \epsilon-\frac12 & \ 1-2(\epsilon-\frac12)^2 & \ 1 \end{pmatrix} 
\endgroup, \quad
S^*(\epsilon)=
\begingroup 
\setlength\arraycolsep{.5pt}
\begin{pmatrix} (2\epsilon-1)^2 & \ 2\epsilon-1 & \ 1- 2\epsilon \\ 2\epsilon-1 & \ \ 1 & -1\\ 1- 2\epsilon & -1 & \ \ 1 \end{pmatrix}
\endgroup\\
y^*(\epsilon)&=(-(2\epsilon-1)^2, \ -1, \ -1)^T,  
\end{align*}
\end{example}

\vspace{5px}
\noindent
As indicated in~\cite{GS99} and also demonstrated by Example~\ref{motivation_nonlinearity}, the optimal partition may vary with $\epsilon$ on a subinterval of $\interior(\mathcal{E})$. However, the dimensions of $\mathcal{B}(\epsilon)$ and $\mathcal{N}(\epsilon)$, or equivalently $\rank\!\big(X^*(\epsilon)\big)$ and $\rank\!\big(S^*(\epsilon)\big)$, might be stable on certain subintervals. This is in contrast to LO and LCQO, where the interval $\mathcal{E}$ is divided into subintervals and transition points each with a unique optimal partition.

\vspace{5px}
\noindent
Motivated by this observation, we review the notion of an invariancy set from~\cite{GS99} and then introduce nonlinearity intervals and transition points of the optimal partition for $(\mathrm{P_{\epsilon}})$ and $(\mathrm{D_{\epsilon}})$.
\subsection{Invariancy intervals}
Let $\mathcal{I}_{\mathrm{inv}}$ be a subset of $\interior(\mathcal{E})$. Then $\mathcal{I}_{\mathrm{inv}}$ is called an invariancy set if $\pi(\epsilon')=\pi(\epsilon'')$ for all $\epsilon',\epsilon'' \in \mathcal{I}_{\mathrm{inv}}$. The following result is an extension from LCQO~\cite{BJRT96}.
%
\begin{lemma}\label{constancy_interval}
Let $\big(X^*(\epsilon'),y^*(\epsilon'),S^*(\epsilon')\big)$ and $\big(X^*(\epsilon''),y^*(\epsilon''),S^*(\epsilon'')\big)$ be maximally complementary solutions, where $\epsilon',\epsilon'' \in \interior(\mathcal{E})$. If $\pi(\epsilon') = \pi(\epsilon'')$ and $\epsilon_{\rho}:=\rho \epsilon' + (1-\rho)\epsilon''$ for every $0 \le \rho \le 1$, then $\pi(\epsilon') = \pi(\epsilon'') = \pi(\epsilon_{\rho})$. Moreover, 
\begin{equation}\label{maximally_complementary_convex}
\begin{aligned}
\tilde{X}(\epsilon_{\rho}) &:= \rho X^*(\epsilon') + (1-\rho)X^*(\epsilon''),\\
\tilde{y}(\epsilon_{\rho}) &:= \rho y^*(\epsilon') + (1-\rho)y^*(\epsilon''),\\
\tilde{S}(\epsilon_{\rho}) &:= \rho S^*(\epsilon') + (1-\rho)S^*(\epsilon'')
\end{aligned}
\end{equation}
is a maximally complementary solution of $(\mathrm{P_{\epsilon_{\rho}}})$ and $(\mathrm{D_{\epsilon_{\rho}}})$.
\end{lemma}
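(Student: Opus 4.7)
My plan is to show that $(\tilde{X}(\epsilon_\rho), \tilde{y}(\epsilon_\rho), \tilde{S}(\epsilon_\rho))$ is a primal-dual optimal pair at $\epsilon_\rho$ whose ranges already coincide with the candidate subspaces $\mathcal{B} := \mathcal{B}(\epsilon') = \mathcal{B}(\epsilon'')$ and $\mathcal{N} := \mathcal{N}(\epsilon') = \mathcal{N}(\epsilon'')$, and then to rule out any strict enlargement of the optimal partition at $\epsilon_\rho$ by leveraging the concavity of $v$.

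First, primal feasibility of $\tilde{X}(\epsilon_\rho)$ is immediate from the linearity of $\langle A^i, \cdot\rangle$ and the convexity of $\mathbb{S}^n_+$, and dual feasibility of $(\tilde{y}(\epsilon_\rho), \tilde{S}(\epsilon_\rho))$ follows from the identity $C + \epsilon_\rho \bar{C} = \rho(C + \epsilon' \bar{C}) + (1-\rho)(C + \epsilon'' \bar{C})$. Since both $X^*(\epsilon')$ and $X^*(\epsilon'')$ are maximally complementary with the same partition, Theorem~\ref{optimal_sol_rep} lets me write them as $Q_{\mathcal{B}} U_{1} Q_{\mathcal{B}}^T$ and $Q_{\mathcal{B}} U_{2} Q_{\mathcal{B}}^T$ with $U_1, U_2 \succ 0$, so $\tilde{X}(\epsilon_\rho) = Q_{\mathcal{B}} (\rho U_1 + (1-\rho) U_2) Q_{\mathcal{B}}^T$ inherits $\mathcal{R}(\tilde{X}(\epsilon_\rho)) = \mathcal{B}$; analogously $\mathcal{R}(\tilde{S}(\epsilon_\rho)) = \mathcal{N}$. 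Because $\mathcal{B} \perp \mathcal{N}$, orthogonality of $Q_{\mathcal{B}}$ and $Q_{\mathcal{N}}$ yields $\tilde{X}(\epsilon_\rho) \tilde{S}(\epsilon_\rho) = 0$, and weak duality promotes the triple to an optimal pair at $\epsilon_\rho$. At this point I already have the inclusions $\mathcal{B} \subseteq \mathcal{B}(\epsilon_\rho)$ and $\mathcal{N} \subseteq \mathcal{N}(\epsilon_\rho)$.

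To exclude strict containment, I would upper-bound $v(\epsilon_\rho) \le \langle C + \epsilon_\rho \bar{C}, \tilde{X}(\epsilon_\rho)\rangle = \rho v(\epsilon') + (1-\rho) v(\epsilon'')$, which by concavity of $v$ on $\mathcal{E}$ is simultaneously a lower bound, forcing affinity of $v$ on $[\epsilon', \epsilon'']$. Picking any maximally complementary $X^*(\epsilon_\rho) \in \ri(\mathcal{P}^*(\epsilon_\rho))$ and splitting
\[ v(\epsilon_\rho) = \langle C + \epsilon_\rho \bar{C}, X^*(\epsilon_\rho)\rangle = \rho \langle C + \epsilon' \bar{C}, X^*(\epsilon_\rho)\rangle + (1-\rho) \langle C + \epsilon'' \bar{C}, X^*(\epsilon_\rho)\rangle, \]
the two summands are bounded below by $v(\epsilon')$ and $v(\epsilon'')$ (since $X^*(\epsilon_\rho)$ is primal feasible at every $\epsilon$), so both inequalities must be tight. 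In particular $X^*(\epsilon_\rho) \in \mathcal{P}^*(\epsilon')$, hence $\mathcal{B}(\epsilon_\rho) = \mathcal{R}(X^*(\epsilon_\rho)) \subseteq \mathcal{B}(\epsilon') = \mathcal{B}$. A symmetric argument on the dual side, using $\tilde{S}(\epsilon_\rho)$ as the feasible candidate and any $(y^*(\epsilon_\rho), S^*(\epsilon_\rho)) \in \ri(\mathcal{D}^*(\epsilon_\rho))$, yields $\mathcal{N}(\epsilon_\rho) \subseteq \mathcal{N}$. Combining with the reverse inclusions gives $\pi(\epsilon_\rho) = \pi(\epsilon')$, and one last application of Theorem~\ref{optimal_sol_rep} with $\mathcal{R}(\tilde{X}(\epsilon_\rho)) = \mathcal{B}(\epsilon_\rho)$ and $\mathcal{R}(\tilde{S}(\epsilon_\rho)) = \mathcal{N}(\epsilon_\rho)$ upgrades $(\tilde{X}(\epsilon_\rho), \tilde{y}(\epsilon_\rho), \tilde{S}(\epsilon_\rho))$ to a maximally complementary solution at $\epsilon_\rho$.

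The main obstacle I expect is precisely this second stage, i.e., establishing $\mathcal{B}(\epsilon_\rho) \subseteq \mathcal{B}$ and its dual analogue. Feasibility, complementarity, and the containments $\mathcal{B} \subseteq \mathcal{B}(\epsilon_\rho)$, $\mathcal{N} \subseteq \mathcal{N}(\epsilon_\rho)$ are routine, but the reverse inclusion depends crucially on converting the upper bound $v(\epsilon_\rho) \le \rho v(\epsilon') + (1-\rho) v(\epsilon'')$ into an equality via concavity, which is the key observation that makes the convex-combination construction sharp and keeps the partition from strictly growing in the interior of $[\epsilon', \epsilon'']$.
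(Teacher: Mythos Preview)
Your primal half is correct and is a genuinely different (and arguably cleaner) route than the paper's: using that the primal feasible set is $\epsilon$-independent, you split $\langle C+\epsilon_\rho\bar C,\,X^*(\epsilon_\rho)\rangle$ as a convex combination of primal objectives at $\epsilon'$ and $\epsilon''$, match it against the already-established affinity $v(\epsilon_\rho)=\rho v(\epsilon')+(1-\rho)v(\epsilon'')$, and conclude $X^*(\epsilon_\rho)\in\mathcal P^*(\epsilon')$, whence $\mathcal B(\epsilon_\rho)\subseteq\mathcal B$. Together with the easy inclusion this pins down $\mathcal B(\epsilon_\rho)=\mathcal B$.

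The gap is the sentence ``a symmetric argument on the dual side \ldots\ yields $\mathcal N(\epsilon_\rho)\subseteq\mathcal N$.'' The symmetry you invoke does not exist here: the perturbation sits in the primal objective but in the dual \emph{constraint}. A maximally complementary $(y^*(\epsilon_\rho),S^*(\epsilon_\rho))$ satisfies $\sum_i y_i A^i+S=C+\epsilon_\rho\bar C$ and is therefore \emph{not} dual feasible at $\epsilon'$ or $\epsilon''$; and the dual objective $b^Ty$ does not decompose in $\epsilon$, so there is no analogue of your splitting step. What your argument actually delivers on the dual side is only $\mathcal N\subseteq\mathcal N(\epsilon_\rho)\subseteq\mathcal B^\perp=\mathcal T\oplus\mathcal N$ (the second inclusion from $\mathcal B(\epsilon_\rho)=\mathcal B$), which leaves open the possibility that $\mathcal N(\epsilon_\rho)$ picks up a nontrivial piece of $\mathcal T$.

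This is exactly the obstruction the paper's proof is designed to remove. Instead of relying on $\epsilon$-independent feasibility, the paper overshoots to $\epsilon''':=(1+\kappa)\epsilon''-\kappa\epsilon'$ and uses Theorem~\ref{optimal_sol_rep} to keep $(1+\kappa)S^*(\epsilon'')-\kappa S^*(\epsilon')\succeq 0$ for small $\kappa$; then $\epsilon''$ is a genuine convex combination of $\epsilon_\rho$ and $\epsilon'''$, and mixing $(X^*(\epsilon_\rho),S^*(\epsilon_\rho))$ with the overshoot solution produces an optimal pair at $\epsilon''$ that would violate $q\in\mathcal T(\epsilon'')$ if $\mathcal T(\epsilon_\rho)\subsetneq\mathcal T$. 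That extrapolation step is the missing idea in your outline; without it (or an equivalent device) the inclusion $\mathcal N(\epsilon_\rho)\subseteq\mathcal N$ is not established.
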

\begin{proof}
Since $\mathcal{B}(\epsilon') = \mathcal{B}(\epsilon'')$ and $\mathcal{N}(\epsilon') = \mathcal{N}(\epsilon'')$, it is easy to see from the primal-dual feasibility constraints that $\big(\tilde{X}(\epsilon_{\rho}),\tilde{y}(\epsilon_{\rho}),\tilde{S}(\epsilon_{\rho})\big)$ is a primal-dual feasible solution of $(\mathrm{P_{\epsilon_{\rho}}})$ and $(\mathrm{D_{\epsilon_{\rho}}})$. Let us fix $Q_{\mathcal{B}(\epsilon')}$ and $Q_{\mathcal{N}(\epsilon')}$. Then, by Theorem~\ref{optimal_sol_rep}, there exist $U_{X^*(\epsilon'')} \succ 0$ and $U_{S^*(\epsilon'')} \succ 0$ such that 
\begin{align*}
\tilde{X}(\epsilon_{\rho}) &= \rho Q_{\mathcal{B}(\epsilon')}  U_{X^*(\epsilon')} Q^T_{\mathcal{B}(\epsilon')} + (1-\rho) Q_{\mathcal{B}(\epsilon')} U_{X^*(\epsilon'')} Q^T_{\mathcal{B}(\epsilon')},\\
\tilde{S}(\epsilon_{\rho}) &= \rho Q_{\mathcal{N}(\epsilon')} U_{S^*(\epsilon')} Q^T_{\mathcal{N}(\epsilon')} + (1-\rho) Q_{\mathcal{N}(\epsilon')} U_{S^*(\epsilon'')} Q^T_{\mathcal{N}(\epsilon')},
\end{align*}
and 
\begin{align*}
\rho U_{X^*(\epsilon')} + (1-\rho) U_{X^*(\epsilon'')} &\succ 0,\\
 \rho U_{S^*(\epsilon')} + (1-\rho) U_{S^*(\epsilon'')} &\succ 0.
\end{align*}
All this implies that $\big(\tilde{X}(\epsilon_{\rho}),\tilde{y}(\epsilon_{\rho}),\tilde{S}(\epsilon_{\rho})\big)$ is a primal-dual optimal solution of $(\mathrm{P_{\epsilon_{\rho}}})$ and $(\mathrm{D_{\epsilon_{\rho}}})$ and
\begin{equation}\label{inclusion_property}
\begin{aligned}
\mathcal{B}(\epsilon') &= \mathcal{R}\big(Q_{\mathcal{B}(\epsilon')}\big)= \mathcal{R}\big(\tilde{X}(\epsilon_{\rho})\big) \subseteq \mathcal{B}(\epsilon_{\rho}), \\
\mathcal{N}(\epsilon') &= \mathcal{R}\big(Q_{\mathcal{N}(\epsilon')}\big) =  \mathcal{R}\big(\tilde{S}(\epsilon_{\rho})\big) \subseteq \mathcal{N}(\epsilon_{\rho}),
\end{aligned}  
\end{equation}
where the inclusions follow from the definition of a maximally complementary solution. Using the same argument and Theorem~\ref{optimal_sol_rep}, we can choose a sufficiently small $\kappa$ so that
\begin{equation}\label{intermediate_sol}
\begin{aligned}
\hat{X}\big((1 + \kappa)\epsilon'' - \kappa \epsilon'\big) &:= (1+\kappa) X^*(\epsilon'') - \kappa  X^*(\epsilon'),\\
\hat{y}\big((1 + \kappa)\epsilon'' - \kappa \epsilon'\big) &:= (1+\kappa) y^*(\epsilon'') - \kappa  y^*(\epsilon'),\\
\hat{S}\big((1 + \kappa)\epsilon'' - \kappa \epsilon'\big) &:=(1+\kappa) S^*(\epsilon'') - \kappa  S^*(\epsilon') 
\end{aligned}
\end{equation}
yields an optimal solution for $(\mathrm{P_{(1 + \kappa)\epsilon'' - \kappa \epsilon'}})$ and $(\mathrm{D_{(1 + \kappa)\epsilon'' - \kappa \epsilon'}})$. Note that $\kappa$ can be made so small that $(1 + \kappa)\epsilon'' - \kappa \epsilon' \in \interior(\mathcal{E})$. Now, if $\mathcal{T}(\epsilon') \supsetneq \mathcal{T}(\epsilon_{\rho})$, then there would exist a maximally complementary solution $\big(X^*(\epsilon_{\rho}),y^*(\epsilon_{\rho}),S^*(\epsilon_{\rho})\big)$ and $0 \neq q \in \mathcal{T}(\epsilon')$ so that 
\begin{align}\label{common_eigenvector}
q^T \big(X^*(\epsilon_{\rho}) + S^*(\epsilon_{\rho})\big)q > 0.
\end{align}
However, this would contradict the optimal partition at $\epsilon'$ and $\epsilon''$. To see this, we can check that 
\begin{align*}
\epsilon'' = \frac{\kappa}{\kappa + \rho} \epsilon_{\rho} + \frac{\rho}{\kappa + \rho} \epsilon''',
\end{align*}
where $ \epsilon''':= (1 + \kappa)\epsilon'' - \kappa \epsilon'$. Then 
\begin{align*}
\bar{X}(\epsilon'') &:=  \frac{\kappa}{\kappa + \rho} X^*(\epsilon_{\rho}) + \frac{\rho}{\kappa + \rho}\hat{X}(\epsilon'''),\\
\bar{y}(\epsilon'') &:=  \frac{\kappa}{\kappa + \rho} y^*(\epsilon_{\rho}) + \frac{\rho}{\kappa + \rho}\hat{y}(\epsilon'''),\\
\bar{S}(\epsilon'') &:=  \frac{\kappa}{\kappa + \rho} S^*(\epsilon_{\rho}) + \frac{\rho}{\kappa + \rho}\hat{S}(\epsilon''')
\end{align*}
gives a primal-dual optimal solution for $(\mathrm{P_{\epsilon''}})$ and $(\mathrm{D_{\epsilon''}})$, where $\big(\bar{X}(\epsilon''),\bar{y}(\epsilon''),\bar{S}(\epsilon'')\big)$ satisfies the complementarity condition by~\eqref{inclusion_property} and~\eqref{intermediate_sol}. However, we have from~\eqref{common_eigenvector} that
\begin{align*}
q^T \big(\bar{X}(\epsilon'') + \bar{S}(\epsilon'')\big)q > 0,
\end{align*}
which is a contradiction, since $\bar{X}(\epsilon'')q=\bar{S}(\epsilon'')q=0$ by Theorem~\ref{optimal_sol_rep}. Therefore, we have $\mathcal{T}(\epsilon') = \mathcal{T}(\epsilon_{\rho})$, which induces $\mathcal{B}(\epsilon') = \mathcal{B}(\epsilon_{\rho})$ and $\mathcal{N}(\epsilon') = \mathcal{N}(\epsilon_{\rho})$. The second part of the proof is immediate.
\end{proof}
\noindent
Let $\bar{\epsilon} \in \mathcal{I}_{\mathrm{inv}}$. By the definition of an invariancy set, $\mathcal{I}_{\mathrm{inv}}$ is the set of all $\epsilon \in \interior(\mathcal{E})$ for which the system
\begin{align*}
\langle A^i , Q_{\mathcal{B}(\bar{\epsilon} )} U_X Q^T_{\mathcal{B}(\bar{\epsilon} )} \rangle&=b_i, \qquad   i=1,\ldots, m, & U_X &\succ 0,\\[-1\jot]
\sum_{i=1}^m A^i y_i+Q_{\mathcal{N}(\bar{\epsilon} )} U_S Q^T_{\mathcal{N}(\bar{\epsilon} )}&=C + \epsilon \bar{C}, & U_S &\succ 0
\end{align*}
remains feasible. Therefore, from Lemma~\ref{constancy_interval} it is immediate that $\mathcal{I}_{\mathrm{inv}}$ is either a singleton or an open, possibly unbounded, interval. The latter is simply referred to as an invariancy interval.
\begin{remark}
It follows from~\eqref{maximally_complementary_convex} that $v(\epsilon_{\rho}) =  \rho v(\epsilon') + (1-\rho) v(\epsilon'')$, i.e., the optimal value function is indeed linear on an invariancy set. Furthermore, it is easy to show that there exists either a unique primal optimal solution or a unique primal optimal set associated with an invariancy set, see also Corollary 2 in~\cite{JRT93}. For instance, the invariancy interval $(-\infty,-\frac12)$ in Example~\ref{motivation_nonlinearity} corresponds to the unique primal optimal solution
\begin{align*}
X^*(\epsilon)&=\begin{pmatrix} \ \ 1 & \ \ \ 1 & \ -1 \\ \ \ 1 & \ \ \ 1 & \ -1\\ -1 & \ -1 & \ \ \ 1 \end{pmatrix}, \qquad \epsilon \in (-\infty,-\frac12),
\end{align*}
which is an extreme point of $\mathcal{E}\ell\ell_3$.
\end{remark}
\vspace{5px}
\noindent
An invariancy set can be computed by solving a pair of auxiliary SDO problems. The linear conic optimization counterpart can be found in Section 4 in~\cite{Y2004}.
%
\begin{lemma}[Lemma 4.1 in~\cite{GS99}]\label{linearity_interval}
Assume that $\bar{\epsilon}$ belongs to a bounded invariancy set $\mathcal{I}_{\mathrm{inv}}$. Then the boundary points of $\mathcal{I}_{\mathrm{inv}}$ can be obtained by solving 
\begin{align*}
\alpha_{\mathrm{inv}}(\beta_{\mathrm{inv}}):=\inf(\sup)  &\quad \epsilon\\[-1\jot] 
\st &\quad \sum_{i=1}^m y_i A^i + Q_{\mathcal{N}(\bar{\epsilon})} U_S Q^T_{\mathcal{N}(\bar{\epsilon})} = C + \epsilon \bar{C},\\
&\quad U_S \succ 0.
\end{align*}
If $\mathcal{I}_{\mathrm{inv}}$ is unbounded, then we have either $\alpha_{\mathrm{inv}} = -\infty$, $\beta_{\mathrm{inv}} = \infty$, or both. \qed
\end{lemma}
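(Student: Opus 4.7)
The plan is to reduce the definition of $\mathcal{I}_{\mathrm{inv}}$ to a purely dual strict-feasibility condition and then recognize $\alpha_{\mathrm{inv}}$, $\beta_{\mathrm{inv}}$ as the infimum and supremum of the set of $\epsilon$ for which this dual system is feasible.

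First I would recall the characterization immediately preceding the lemma: $\epsilon\in\mathcal{I}_{\mathrm{inv}}$ if and only if there exist $U_X\succ 0$, $y$, and $U_S\succ 0$ satisfying
\begin{align*}
\langle A^i, Q_{\mathcal{B}(\bar{\epsilon})} U_X Q^T_{\mathcal{B}(\bar{\epsilon})}\rangle &= b_i, \quad i=1,\ldots,m,\\
\sum_{i=1}^m y_i A^i + Q_{\mathcal{N}(\bar{\epsilon})} U_S Q^T_{\mathcal{N}(\bar{\epsilon})} &= C+\epsilon\bar{C}.
\end{align*}
The crucial observation is that the primal block of this system is independent of $\epsilon$. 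Since $\bar{\epsilon}\in\mathcal{I}_{\mathrm{inv}}$, a witness $U_X\succ 0$ for the primal block already exists, and it serves as a primal witness for \emph{every} $\epsilon$. Hence $\mathcal{I}_{\mathrm{inv}}$ coincides with the set of $\epsilon$ for which the dual block admits a solution with $U_S\succ 0$.

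Next I would verify the two inclusions rigorously. For the ``$\supseteq$'' direction, given any $\epsilon$ feasible to the dual block, the pair $X:=Q_{\mathcal{B}(\bar{\epsilon})} U_X Q^T_{\mathcal{B}(\bar{\epsilon})}$ and $S:=Q_{\mathcal{N}(\bar{\epsilon})} U_S Q^T_{\mathcal{N}(\bar{\epsilon})}$ is primal-dual feasible and complementary (because $\mathcal{B}(\bar{\epsilon})\perp\mathcal{N}(\bar{\epsilon})$), so by strong duality it is optimal for $(\mathrm{P_\epsilon})$ and $(\mathrm{D_\epsilon})$; since $U_X,U_S\succ 0$, the images are exactly $\mathcal{B}(\bar{\epsilon})$ and $\mathcal{N}(\bar{\epsilon})$, so $\mathcal{B}(\bar{\epsilon})\subseteq\mathcal{B}(\epsilon)$ and $\mathcal{N}(\bar{\epsilon})\subseteq\mathcal{N}(\epsilon)$. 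Applying the same argument starting from a maximally complementary solution at $\epsilon$ (taken as a convex combination with the solution at $\bar{\epsilon}$, as in the proof of Lemma~\ref{constancy_interval}) forces the reverse inclusions, giving $\pi(\epsilon)=\pi(\bar{\epsilon})$, i.e., $\epsilon\in\mathcal{I}_{\mathrm{inv}}$. For ``$\subseteq$'', given $\epsilon\in\mathcal{I}_{\mathrm{inv}}$, Theorem~\ref{optimal_sol_rep} applied to a maximally complementary solution $(X^*(\epsilon),y^*(\epsilon),S^*(\epsilon))$ yields $S^*(\epsilon)=Q_{\mathcal{N}(\bar{\epsilon})} U_S Q^T_{\mathcal{N}(\bar{\epsilon})}$ with $U_S\succ 0$, and $y^*(\epsilon)$ provides the corresponding multiplier, so the dual system is feasible.

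Having established $\mathcal{I}_{\mathrm{inv}}=\{\epsilon \mid \text{the dual strict-feasibility system holds}\}$, the boundary points of $\mathcal{I}_{\mathrm{inv}}$ are, by definition of infimum and supremum,
\begin{align*}
\alpha_{\mathrm{inv}} = \inf\{\epsilon \mid \text{dual system feasible}\},\quad \beta_{\mathrm{inv}} = \sup\{\epsilon \mid \text{dual system feasible}\},
\end{align*}
which is exactly the stated optimization problem. If $\mathcal{I}_{\mathrm{inv}}$ is unbounded below (respectively above), then the feasible set in $\epsilon$ is unbounded in that direction and the corresponding $\alpha_{\mathrm{inv}}=-\infty$ (respectively $\beta_{\mathrm{inv}}=+\infty$).

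The main obstacle I anticipate is the ``$\supseteq$'' direction: one must argue that dual strict feasibility at $\epsilon$ \emph{together with} the fixed primal solution from $\bar{\epsilon}$ actually produces a \emph{maximally} complementary pair at $\epsilon$, not merely some optimal pair. This is handled by the maximality argument for the $\mathcal{T}$-subspace used in the proof of Lemma~\ref{constancy_interval}, invoked through a convex combination of the candidate solution at $\epsilon$ with the maximally complementary solution at $\bar{\epsilon}$; everything else is a direct consequence of the $\epsilon$-independence of the primal block.
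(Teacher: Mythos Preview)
The paper does not supply its own proof of this lemma; it is quoted as Lemma~4.1 of Goldfarb and Scheinberg~\cite{GS99} and closed immediately with a \qed. So there is no in-paper argument to compare against.

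Your proposal is correct and follows the natural line. The key reduction---that the primal block of the characterization preceding the lemma is $\epsilon$-independent, so $\mathcal{I}_{\mathrm{inv}}$ coincides with the feasibility region in $\epsilon$ of the displayed dual strict system---is exactly the right observation. Your handling of the nontrivial ``$\supseteq$'' direction (showing that dual strict feasibility at $\epsilon$ forces $\pi(\epsilon)=\pi(\bar{\epsilon})$, not merely the inclusions $\mathcal{B}(\bar{\epsilon})\subseteq\mathcal{B}(\epsilon)$ and $\mathcal{N}(\bar{\epsilon})\subseteq\mathcal{N}(\epsilon)$) via the extrapolation/convex-combination technique from the proof of Lemma~\ref{constancy_interval} is sound; just be aware that you are adapting the \emph{technique} of that proof rather than invoking the lemma itself, since here $\pi(\epsilon)=\pi(\bar{\epsilon})$ is precisely what must be established, and the hypothesis $\pi(\epsilon')=\pi(\epsilon'')$ of Lemma~\ref{constancy_interval} is not available. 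Once $\mathcal{I}_{\mathrm{inv}}$ is identified with the feasibility set of the auxiliary system (an interval containing $\bar{\epsilon}$), the identification of $\alpha_{\mathrm{inv}}$ and $\beta_{\mathrm{inv}}$ with its infimum and supremum, and the unbounded case, are immediate.
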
 
\subsection{Transition points and nonlinearity intervals}
As a result of Lemma~\ref{linearity_interval}, if $\alpha_{\mathrm{inv}} < \beta_{\mathrm{inv}}$, then $\alpha_{\mathrm{inv}} < \bar{\epsilon} < \beta_{\mathrm{inv}}$ belongs to the invariancy interval $(\alpha_{\mathrm{inv}},\beta_{\mathrm{inv}})$. Otherwise, $\alpha_{\mathrm{inv}}=\bar{\epsilon} = \beta_{\mathrm{inv}}$ indicates that the optimal partition changes in every neighborhood of $\bar{\epsilon}$. In Example~\ref{motivation_nonlinearity}, $(-\frac12,\frac32)$ is a subinterval with varying optimal partition.  

\begin{definition}\label{transition_point}
A singleton invariancy set $\{\bar{\epsilon}\} \in \interior(\mathcal{E})$ is called a transition point if for every $\xi > 0$ there exists $\epsilon \in (\bar{\epsilon}-\xi,\bar{\epsilon}+\xi) \subseteq \interior(\mathcal{E})$ such that  
\begin{align*}
\dim\!\big(\mathcal{B}(\epsilon)\big) \neq \dim\!\big(\mathcal{B}(\bar{\epsilon})\big), \quad \text{or} \quad \dim\!\big(\mathcal{N}(\epsilon)\big) \neq \dim\!\big(\mathcal{N}(\bar{\epsilon})\big). 
\end{align*}
\end{definition}
\begin{definition}\label{nonlinearity_interval}
A nonlinearity interval is defined as a non-singleton open, possibly unbounded, subinterval of maximal length $\mathcal{I}_{\mathrm{non}} \subseteq \interior(\mathcal{E})$ such that
\begin{align*}
\dim\!\big(\mathcal{B}(\epsilon')\big)=\dim\!\big(\mathcal{B}(\epsilon'')\big), \quad \text{and} \quad \dim\!\big(\mathcal{N}(\epsilon')\big)=\dim\!\big(\mathcal{N}(\epsilon'')\big), \qquad \forall \epsilon',\epsilon'' \in \mathcal{I}_{\mathrm{non}},
\end{align*}
while $\pi(\epsilon)$ varies with $\epsilon$.
\end{definition} 

\vspace{5px}
\noindent
Given a maximally complementary solution $\big(X^*(\epsilon),y^*(\epsilon),S^*(\epsilon)\big)$, Definition~\ref{nonlinearity_interval} yields the fact that the eigenvalues of both $X^*(\epsilon)$ and $S^*(\epsilon)$ change with $\epsilon$ on a nonlinearity interval. Furthermore, Definition~\ref{transition_point} implies that a singleton invariancy set $\{\bar{\epsilon}\}$ either is a transition point, or it lies in a nonlinearity interval. 
\subsubsection{On the existence of a nonlinearity interval}
Observe from Example~\ref{motivation_nonlinearity} that $\big(X^*(\epsilon),y^*(\epsilon),S^*(\epsilon)\big)$ is strictly complementary, and the eigenvalues of $X^*(\epsilon)$ and $S^*(\epsilon)$ are continuous on $(-\frac12,\frac32)$:
\begin{equation}
\begin{aligned}\label{eigenvalues_example}
\Lambda\big(X^*(\epsilon)\big)=\begin{pmatrix} -2\epsilon^2+2\epsilon+\frac32 & 0 & 0\\0 & 2\epsilon^2-2\epsilon+\frac32 & 0\\0 & 0 & 0 \end{pmatrix}, \quad
\Lambda\big(S^*(\epsilon)\big)=\begin{pmatrix} 0 & 0 & 0\\0 & 0 & 0\\0 & 0 & 4\epsilon^2-4\epsilon+3 \end{pmatrix}.
\end{aligned}
\end{equation}

\vspace{5px}
\noindent
Mathematically speaking, continuity arguments and the strict complementarity condition induce sufficient conditions for the existence of a nonlinearity interval, as stated in Theorem~\ref{nonlinearity_continuity}.
\begin{theorem}\label{nonlinearity_continuity}
Assume that for every sequence $\{\epsilon_k\} \to \bar{\epsilon}$ there exists a sequence of optimal solutions $\big\{\big(X(\epsilon_k),y(\epsilon_k),S(\epsilon_k)\big)\big\} \to \big(X^*(\bar{\epsilon}),y^*(\bar{\epsilon}),S^*(\bar{\epsilon})\big)$, where $ \big(X^*(\bar{\epsilon}),y^*(\bar{\epsilon}),S^*(\bar{\epsilon})\big)$ is a strictly complementary solution. Then $\bar{\epsilon}$ belongs to a nonlinearity interval.
\end{theorem}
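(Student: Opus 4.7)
My strategy is proof by contradiction, assuming $\bar{\epsilon}$ is a transition point and using the convergence hypothesis to pin down the rank structure of nearby optimal solutions. The starting observation is that strict complementarity at $\bar{\epsilon}$ gives $\mathcal{T}(\bar{\epsilon}) = \{0\}$, $\rank(X^*(\bar{\epsilon})) = n_{\mathcal{B}}(\bar{\epsilon})$, $\rank(S^*(\bar{\epsilon})) = n_{\mathcal{N}}(\bar{\epsilon})$, and $n_{\mathcal{B}}(\bar{\epsilon}) + n_{\mathcal{N}}(\bar{\epsilon}) = n$. Suppose, toward contradiction, that $\bar{\epsilon}$ is a transition point in the sense of Definition~\ref{transition_point}. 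Then there is a sequence $\{\epsilon_k\} \subseteq \interior(\mathcal{E})$ with $\epsilon_k \to \bar{\epsilon}$ such that, for each $k$, either $n_{\mathcal{B}}(\epsilon_k) \neq n_{\mathcal{B}}(\bar{\epsilon})$ or $n_{\mathcal{N}}(\epsilon_k) \neq n_{\mathcal{N}}(\bar{\epsilon})$.

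Applying the hypothesis to this particular sequence produces optimal solutions $(X(\epsilon_k), y(\epsilon_k), S(\epsilon_k)) \to (X^*(\bar{\epsilon}), y^*(\bar{\epsilon}), S^*(\bar{\epsilon}))$. Weyl's inequality makes the eigenvalues of a symmetric matrix $1$-Lipschitz in the spectral norm; since $X^*(\bar{\epsilon})$ has exactly $n_{\mathcal{B}}(\bar{\epsilon})$ positive eigenvalues, each bounded below by some $\lambda > 0$, for all large $k$ the matrix $X(\epsilon_k)$ has at least $n_{\mathcal{B}}(\bar{\epsilon})$ eigenvalues exceeding $\lambda/2$, so $\rank(X(\epsilon_k)) \geq n_{\mathcal{B}}(\bar{\epsilon})$. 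Theorem~\ref{optimal_sol_rep} provides the reverse containment: every primal optimal solution at $\epsilon_k$ is of the form $Q_{\mathcal{B}(\epsilon_k)} U Q_{\mathcal{B}(\epsilon_k)}^T$ with $U \succeq 0$, so $\rank(X(\epsilon_k)) \leq \dim(\mathcal{B}(\epsilon_k)) = n_{\mathcal{B}}(\epsilon_k)$, and the analogous bounds hold for $S$. Summing, $n \geq n_{\mathcal{B}}(\epsilon_k) + n_{\mathcal{N}}(\epsilon_k) \geq n_{\mathcal{B}}(\bar{\epsilon}) + n_{\mathcal{N}}(\bar{\epsilon}) = n$, so equality holds throughout and $n_{\mathcal{B}}(\epsilon_k) = n_{\mathcal{B}}(\bar{\epsilon})$, $n_{\mathcal{N}}(\epsilon_k) = n_{\mathcal{N}}(\bar{\epsilon})$ (with $n_{\mathcal{T}}(\epsilon_k) = 0$) for all large $k$, contradicting the construction of $\{\epsilon_k\}$.

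Therefore $\bar{\epsilon}$ is not a transition point, which means $n_{\mathcal{B}}(\cdot)$ and $n_{\mathcal{N}}(\cdot)$ are constant on some open neighborhood of $\bar{\epsilon}$ in $\interior(\mathcal{E})$; extending to the maximal open subinterval with this property exhibits the nonlinearity interval claimed in Definition~\ref{nonlinearity_interval}. The main obstacle I anticipate is the eigenvalue-continuity step: I need the quantitative lower bound $\lambda$ on the nonzero spectrum of $X^*(\bar{\epsilon})$ (and its analog for $S^*(\bar{\epsilon})$) to be transferred cleanly through Weyl's inequality to $X(\epsilon_k)$ and $S(\epsilon_k)$, and simultaneously I must exploit the maximum-rank characterization from Theorem~\ref{optimal_sol_rep} to equate $\rank(X(\epsilon_k))$ with $n_{\mathcal{B}}(\epsilon_k)$. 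A secondary delicate point is recognizing the locally-constant-rank interval as a nonlinearity interval rather than an invariancy interval; here I can appeal to Lemma~\ref{linearity_interval}, which shows that invariancy intervals are characterized by the solvability of a specific parametric LMI and hence do not arise around a strictly complementary limit obtained from genuinely perturbed optima.
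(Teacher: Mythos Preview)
Your core argument is essentially the paper's: strict complementarity gives $\rank(X^*(\bar\epsilon))+\rank(S^*(\bar\epsilon))=n$, eigenvalue continuity forces $\rank(X(\epsilon_k))\ge n_{\mathcal B}(\bar\epsilon)$ and $\rank(S(\epsilon_k))\ge n_{\mathcal N}(\bar\epsilon)$ for large $k$, and complementarity at $\epsilon_k$ then pins both ranks to equality, so the dimensions of $\mathcal B(\cdot)$ and $\mathcal N(\cdot)$ are locally constant. The paper states this directly in three lines; you frame it as a contradiction and supply more detail (Weyl's inequality, the explicit invocation of Theorem~\ref{optimal_sol_rep}), but the mechanism is identical.

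One comment on your final paragraph. The concern you raise---that locally constant rank could in principle place $\bar\epsilon$ in an invariancy interval rather than a nonlinearity interval---is legitimate, but your proposed resolution via Lemma~\ref{linearity_interval} does not work: invariancy intervals can certainly contain strictly complementary solutions (e.g., every $\epsilon$ in $(-\infty,-\tfrac12)$ in Example~\ref{motivation_nonlinearity}), so nothing in the hypothesis excludes that case. The paper's own proof is silent on this distinction as well; in context the theorem is read as ``$\bar\epsilon$ is not a transition point,'' and the invariancy-versus-nonlinearity question is not separated out. So you have not introduced a gap that the paper avoids---you have simply noticed an imprecision that the paper leaves unaddressed.
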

\begin{proof}
Since $\big(X^*(\bar{\epsilon}),y^*(\bar{\epsilon}),S^*(\bar{\epsilon})\big)$ is strictly complementary, we have
\begin{align}
\rank\!\big(X^*(\bar{\epsilon})\big) + \rank\!\big(S^*(\bar{\epsilon})\big) = n. \label{strict_complementarity_con}
\end{align}
Then by the assumptions and the continuity of the eigenvalues, there exists a primal-dual optimal solution $\big(X(\epsilon_k),y(\epsilon_k),S(\epsilon_k)\big)$ such that
\begin{align*}
\rank\!\big(X(\epsilon_k)\big) &\ge \rank\!\big(X^*(\bar{\epsilon})\big),\\
\rank\!\big(S(\epsilon_k)\big) &\ge \rank\!\big(S^*(\bar{\epsilon})\big)
\end{align*}
for sufficiently large $k$, which by~\eqref{strict_complementarity_con} imply that $\big(X(\epsilon_k),y(\epsilon_k),S(\epsilon_k)\big)$ is strictly complementary.
\end{proof} 
\begin{remark}\label{continuity_singlevalued}
Note that $\mathcal{P}^*(\epsilon)$ and $\mathcal{D}^*(\epsilon)$ are uniformly bounded near any $\epsilon \in  \interior(\mathcal{E})$, see e.g., Lemma 3.11 in~\cite{SW16}. Hence, in the special case when both $\mathcal{P}^*(\bar{\epsilon})$ and $\mathcal{D}^*(\bar{\epsilon})$ are singleton, the continuity condition of Theorem~\ref{nonlinearity_continuity} automatically holds, see e.g., Corollary 8.1 in~\cite{Ho73b}.
\end{remark}

\vspace{5px}
\noindent
In Example~\ref{motivation_nonlinearity}, we can compute the boundary points of the nonlinearity interval using the explicit form of the eigenvalues given in~\eqref{eigenvalues_example}. In practice, however, it may not be possible to obtain explicit formulas for the eigenvalues in terms of $\epsilon$. More importantly, even with the existence of the strict complementarity condition, the continuity of the eigenvalues of a strictly complementary solution may not be possible to verify in practice. Therefore, identification of a nonlinearity interval is, in general, a nontrivial task.

\vspace{5px}
\noindent
Recall from Definitions~\ref{transition_point} and~\ref{nonlinearity_interval} that a transition point $\bar{\epsilon}$ lies on the boundary of an invariancy or a nonlinearity interval. Equivalently, every neighborhood of $\bar{\epsilon}$ contains an $\epsilon$ with $\big(\mathcal{B}(\epsilon),\mathcal{T}(\epsilon),\mathcal{N}(\epsilon)\big)$ having different dimensions from $\big(\mathcal{B}(\bar{\epsilon}),\mathcal{T}(\bar{\epsilon}),\mathcal{N}(\bar{\epsilon})\big)$. If $\bar{\epsilon}$ is adjacent to an invariancy interval $\mathcal{I}_{\mathrm{inv}}$, then this is consistently true for every $\xi > 0$ and every $\epsilon \in (\bar{\epsilon}-\xi,\bar{\epsilon}+\xi) \cap \mathcal{I}_{\mathrm{inv}}$. For an $\bar{\epsilon}$ adjacent to a nonlinearity interval, unless additional local information is provided, one may not conclude the same property. More precisely, it is not immediate, solely from Definition~\ref{nonlinearity_interval}, whether 
\begin{equation}
\dim\!\big(\mathcal{B}(\epsilon)\big) \neq \dim\!\big(\mathcal{B}(\bar{\epsilon})\big), \  \text{or} \  \dim\!\big(\mathcal{N}(\epsilon)\big) \neq \dim\!\big(\mathcal{N}(\bar{\epsilon})\big), \forall \epsilon \in (\bar{\epsilon}-\xi,\bar{\epsilon}+\xi) \cap \mathcal{I}_{\mathrm{non}}. \label{difference_transition_partititon}
\end{equation}
Corollary~\ref{openness} spells out sufficient conditions to guarantee the change of rank at a boundary point of a nonlinearity interval.
\begin{corollary}\label{openness}
Let $\mathcal{I}_{\mathrm{non}}$ be a nonlinearity interval satisfying the strict complementarity condition, and let $\bar{\epsilon}$ be a boundary point of $\mathcal{I}_{\mathrm{non}}$. If for every $\{\epsilon_k\} \to \bar{\epsilon}$ there exists a sequence of optimal solutions $\big\{\big(X(\epsilon_k),y(\epsilon_k),S(\epsilon_k)\big)\big\}$ converging to a maximally complementary solution $\big(X^*(\bar{\epsilon}),y^*(\bar{\epsilon}),S^*(\bar{\epsilon})\big)$, then~\eqref{difference_transition_partititon} holds.

\end{corollary}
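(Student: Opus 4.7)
The overall plan is to combine the contrapositive of Theorem~\ref{nonlinearity_continuity} with a simple dimension count. The strict complementarity assumption on $\mathcal{I}_{\mathrm{non}}$ gives $\dim\!\big(\mathcal{B}(\epsilon)\big) + \dim\!\big(\mathcal{N}(\epsilon)\big) = n$ for every $\epsilon \in \mathcal{I}_{\mathrm{non}}$. If I can show that the limiting maximally complementary solution $\big(X^*(\bar{\epsilon}),y^*(\bar{\epsilon}),S^*(\bar{\epsilon})\big)$ is \emph{not} strictly complementary, then $\mathcal{T}(\bar{\epsilon}) \neq \{0\}$ and hence $\dim\!\big(\mathcal{B}(\bar{\epsilon})\big) + \dim\!\big(\mathcal{N}(\bar{\epsilon})\big) < n$. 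Consequently the two equalities $\dim\!\big(\mathcal{B}(\epsilon)\big) = \dim\!\big(\mathcal{B}(\bar{\epsilon})\big)$ and $\dim\!\big(\mathcal{N}(\epsilon)\big) = \dim\!\big(\mathcal{N}(\bar{\epsilon})\big)$ cannot hold simultaneously for any $\epsilon \in (\bar{\epsilon}-\xi,\bar{\epsilon}+\xi) \cap \mathcal{I}_{\mathrm{non}}$, which is exactly~\eqref{difference_transition_partititon}.

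The core step is ruling out strict complementarity of the limit. My strategy is to argue first that $\bar{\epsilon}$ cannot belong to any nonlinearity interval, and then invoke the contrapositive of Theorem~\ref{nonlinearity_continuity}, which applies because the convergence hypothesis of that theorem is precisely what is assumed here. Suppose, for contradiction, that $\bar{\epsilon}$ lies in some nonlinearity interval $\mathcal{I}'$. Since $\mathcal{I}'$ is open and $\bar{\epsilon}$ is a boundary point of the open interval $\mathcal{I}_{\mathrm{non}}$, the intersection $\mathcal{I}' \cap \mathcal{I}_{\mathrm{non}}$ is nonempty. On this intersection $\dim\!\big(\mathcal{B}(\cdot)\big)$ and $\dim\!\big(\mathcal{N}(\cdot)\big)$ are constant from the definition of both intervals, so the two constants must coincide and therefore remain constant on the union $\mathcal{I}' \cup \mathcal{I}_{\mathrm{non}}$. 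This union is an open interval strictly larger than $\mathcal{I}_{\mathrm{non}}$ (it contains $\bar{\epsilon}$) on which $\pi(\epsilon)$ still varies, contradicting the maximal-length requirement in Definition~\ref{nonlinearity_interval}. Hence no such $\mathcal{I}'$ exists, and by the contrapositive of Theorem~\ref{nonlinearity_continuity} the limiting maximally complementary solution must fail strict complementarity, giving $\mathcal{T}(\bar{\epsilon}) \neq \{0\}$.

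The main obstacle I anticipate is the maximality merging step: one must confirm that $\mathcal{I}' \cup \mathcal{I}_{\mathrm{non}}$ is indeed an interval (immediate from the nonempty intersection of two open intervals) and that the rank-varying property is preserved on the union (clear since $\pi(\epsilon)$ already varies on each piece). Once these two checks are in place, the dimension count in the first paragraph finishes the argument, since $\dim\!\big(\mathcal{T}(\bar{\epsilon})\big) > 0$ while $\dim\!\big(\mathcal{T}(\epsilon)\big) = 0$ for every $\epsilon \in \mathcal{I}_{\mathrm{non}}$.
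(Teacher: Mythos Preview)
Your argument is correct and follows essentially the same route as the paper: show that $\bar{\epsilon}$ cannot lie in a nonlinearity interval, then apply the contrapositive of Theorem~\ref{nonlinearity_continuity} to force failure of strict complementarity at $\bar{\epsilon}$, and finish with the dimension count $\dim\mathcal{B}(\bar{\epsilon})+\dim\mathcal{N}(\bar{\epsilon})<n=\dim\mathcal{B}(\epsilon)+\dim\mathcal{N}(\epsilon)$ for $\epsilon\in\mathcal{I}_{\mathrm{non}}$. The paper's proof is terser---it simply asserts that $\bar{\epsilon}$ is a transition point (hence not inside any nonlinearity interval) and leaves the final rank comparison implicit---whereas you spell out the maximality merging argument and the closing dimension inequality explicitly, which is a welcome clarification rather than a different method.
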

\begin{proof}
Since $\bar{\epsilon}$ is a transition point and the eigenvalues of $X^*(\epsilon)$ and $S^*(\epsilon)$ vary continuously in a small neighborhood of $\bar{\epsilon}$, the strict complementarity condition must fail at $\bar{\epsilon}$. Otherwise, $\bar{\epsilon}$ would belong to a nonlinearity interval by Theorem~\ref{nonlinearity_continuity}, which is a contradiction.
\end{proof}

\begin{remark}
If the continuity condition in Corollary~\ref{openness} fails, then either $\mathcal{P}^*(\bar{\epsilon})$ or $\mathcal{D}^*(\bar{\epsilon})$ must not be singleton, see Remark~\ref{continuity_singlevalued}. In this case, the sequence $\big\{\big(X(\epsilon_k),y(\epsilon_k),S(\epsilon_k)\big)\big\}$ converges to the boundary of the optimal set at $\bar{\epsilon}$, and thus it may provide no useful information about the rank of $X^*(\bar{\epsilon})$ or $S^*(\bar{\epsilon})$.   
\end{remark}

\subsubsection{On the existence of a transition point}
Analogous to a nonlinearity interval, in general, it is not trivial to identify a transition point of the optimal partition. This is in contrast to LO and LCQO cases, where transition points and non-differentiable points of the optimal value function coincide, see e.g., Theorem 3.7 in~\cite{BJRT96}. For instance, by appending the redundant inequality constraint $z \le 1$ to Example~\ref{motivation_nonlinearity}, we get a new parametric SDO problem 
\begin{align}
\min\!\Bigg\{ (4\epsilon-2)x+(2-4\epsilon)y-2z \ \Bigg | \begin{pmatrix} 1 & \ x & \ y & 0\\x & \ 1 & \ z & 0\\y & \ z & \ 1 & 0\\0 & 0 & 0 & 1-z \end{pmatrix} \succeq 0, \quad (x,y,z)\in\mathbb{R}^3\Bigg\}. \label{3elliptope_appended}
\end{align}
The optimal partition of~\eqref{3elliptope_appended} has a transition point at $\epsilon = \frac12$, while the optimal value function is analytic on $(-\frac12,\frac32)$.

\vspace{5px}
\noindent
A transition point can be further characterized using nonsingularity of the Jacobian of the optimality conditions. Note that the optimality conditions for $(\mathrm{P_{\epsilon}})$ and $(\mathrm{D_{\epsilon}})$ can be written as 
\begin{equation}\label{KKT_conditions}
\begin{aligned}
\mathcal{A}\svectorize(X) &= b,\\[-1\jot]
\mathcal{A}^Ty + \svectorize(S) &= \svectorize(C) + \epsilon \svectorize(\bar{C}),\\[-1\jot]
\frac12 \svectorize(XS + SX) &= 0,\\[-1\jot]
X,S &\succeq 0.
\end{aligned}
\end{equation}
Then the Jacobian of the linear equations in~\eqref{KKT_conditions} is given by
\begin{align*}
J(X,y,S):=\begin{pmatrix} \mathcal{A} & 0 & 0\\0 & \mathcal{A}^T & I_{n(n+1)/2} \\ S \otimes_s I_n & 0 & X \otimes_s I_n \end{pmatrix},
\end{align*}
where $\mathcal{A}$ and the linear transformation $\svectorize(.)$ are defined in~\eqref{coefficient_transformation} and~\eqref{linear_transformation_svec}, respectively, and $\otimes_s$ denotes the \textit{symmetric Kronecker product}%
\footnote{The symmetric Kronecker product of any two square matrices $K_1$ and $K_2$ is defined as a mapping 
\begin{align*}
(K_1 \otimes_s K_2)\svectorize(H):= \frac12 \svectorize\big(K_2 H K_1^T + K_1HK_2^T\big),
\end{align*}
where $H$ is \textcolor{blue}{any} symmetric matrix. See e.g.,~\cite{Kl02} for more details.}. The following technical lemma is in order. 
%
\begin{lemma}[Theorem 3.1 in~\cite{AHO98} and~\cite{Ha}]\label{nonsingularity_conditions}
Let $\big(X^*(\bar{\epsilon}),y^*(\bar{\epsilon}),S^*(\bar{\epsilon})\big)$ be a maximally complementary solution. Then $J(X^*(\bar{\epsilon}),y^*(\bar{\epsilon}),S^*(\bar{\epsilon})\big)$ is nonsingular if and only if $\big(X^*(\bar{\epsilon}),y^*(\bar{\epsilon}),S^*(\bar{\epsilon})\big)$ is strictly complementary and both primal and dual nondegenerate.  \qed
\end{lemma}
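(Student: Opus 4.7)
The plan is to characterize the null space of $J:=J\big(X^*(\bar{\epsilon}),y^*(\bar{\epsilon}),S^*(\bar{\epsilon})\big)$: since $J$ is square of order $m+n(n+1)$, nonsingularity is equivalent to the kernel being trivial. Setting $J\big(\svectorize(\Delta X);\Delta y;\svectorize(\Delta S)\big)=0$ and unwinding the symmetric Kronecker products via $(K\otimes_s I)\svectorize(H)=\frac{1}{2}\svectorize(KH+HK)$ reduces the homogeneous system to
\begin{align*}
\mathcal{A}\,\svectorize(\Delta X)&=0,\\
\mathcal{A}^T\Delta y+\svectorize(\Delta S)&=0,\\
S^*\Delta X+\Delta X\, S^*+X^*\Delta S+\Delta S\, X^*&=0,
\end{align*}
i.e.\ the linearizations of primal feasibility, dual feasibility, and the symmetrized complementarity equation about the base point.

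Next I would diagonalize in the optimal-partition basis $Q=(Q_{\mathcal{B}},Q_{\mathcal{T}},Q_{\mathcal{N}})$, which simultaneously eigendecomposes $X^*$ and $S^*$ by virtue of $X^*S^*=0$. Writing $Q^T\Delta X Q$ and $Q^T\Delta S Q$ as $3\times 3$ symmetric block matrices with blocks $U_{\cdot\cdot}$ and $V_{\cdot\cdot}$, and using that $X^*$ has a positive definite $(\mathcal{B},\mathcal{B})$ block with all other blocks zero while $S^*$ has a positive definite $(\mathcal{N},\mathcal{N})$ block with all other blocks zero, the third equation decouples block by block into Lyapunov identities. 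These force $V_{\mathcal{BB}}=V_{\mathcal{BT}}=U_{\mathcal{TN}}=U_{\mathcal{NN}}=0$ together with the $(\mathcal{B},\mathcal{N})$ coupling $X^*_{\mathcal{BB}}V_{\mathcal{BN}}+U_{\mathcal{BN}}S^*_{\mathcal{NN}}=0$, while the $(\mathcal{T},\mathcal{T})$ block is unconstrained, leaving $U_{\mathcal{TT}}$ and $V_{\mathcal{TT}}$ free from complementarity alone.

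For the sufficiency direction, strict complementarity collapses $\mathcal{T}$ to $\{0\}$ and eliminates the $(\mathcal{T},\mathcal{T})$ degrees of freedom. The surviving $\Delta X$ is then supported on the $(\mathcal{B},\mathcal{B})$ and $(\mathcal{B},\mathcal{N})$ blocks, and $\mathcal{A}\svectorize(\Delta X)=0$ becomes a linear system in $(U_{\mathcal{BB}},U_{\mathcal{BN}})$ whose coefficient matrices are exactly those entering the primal-nondegeneracy definition with $(M_1,M_2)=(Q_{\mathcal{B}},Q_{\mathcal{N}})$; linear independence forces $\Delta X=0$, and the $(\mathcal{B},\mathcal{N})$ coupling then annihilates $V_{\mathcal{BN}}$. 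Substituting $\Delta S=-\mathcal{A}^T\Delta y$ with the vanishing $V_{\mathcal{BB}}$ and $V_{\mathcal{BN}}$ yields the reduced system $\sum_i\Delta y_i\,N_1^TA^iN_1=0$ with $N_1=Q_{\mathcal{B}}$, and dual nondegeneracy (the matrices $N_1^TA^iN_1$ spanning $\mathbb{S}^{n_{\mathcal{B}}}$) combined with the constraints already in hand pins down $\Delta y=0$, hence $\Delta S=0$.

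For the converse I would argue contrapositively: failure of any one of the three conditions furnishes an explicit nonzero element of $\Null(J)$. If strict complementarity fails, then $n_{\mathcal{T}}>0$ and the unconstrained $(\mathcal{T},\mathcal{T})$ degree of freedom together with the residual feasibility equations admits a nontrivial solution; if primal (respectively dual) nondegeneracy fails, a nontrivial linear dependence among the prescribed matrices supplies coefficients that serve directly as a nonzero $\Delta y$ (respectively produce a nonzero $\Delta X$) in the kernel, with the remaining components reconstructed from the feasibility equations. The hardest step will be the sufficiency direction's final push to $\Delta y=0$: one must assemble the block constraints from primal feasibility, the $(\mathcal{B},\mathcal{N})$ coupling, and dual feasibility into a single system whose coefficient matrix concatenates the primal- and dual-nondegeneracy data, and argue that injectivity (from primal nondegeneracy) and surjectivity onto $\mathbb{S}^{n_{\mathcal{B}}}$ (from dual nondegeneracy) combine into full bijectivity; the $(\mathcal{T},\mathcal{T})$ bookkeeping in the converse direction, where one must avoid violating the coupled complementarity relation while constructing the null vector, is likewise delicate.
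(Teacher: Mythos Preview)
The paper does not supply its own proof of this lemma; it simply cites \cite{AHO98} and \cite{Ha} and places a \qed\ after the statement. Your outline is essentially the standard argument from those references: pass to the optimal-partition basis, decouple the linearized complementarity into block Lyapunov equations, and then read off primal and dual nondegeneracy as precisely the injectivity/surjectivity conditions needed to kill the remaining degrees of freedom.

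One point in your necessity direction deserves sharpening. For the failure of strict complementarity you write that ``the unconstrained $(\mathcal{T},\mathcal{T})$ degree of freedom together with the residual feasibility equations admits a nontrivial solution,'' but this kernel construction is not automatic: a free $U_{\mathcal{TT}}$ or $V_{\mathcal{TT}}$ still has to satisfy $\mathcal{A}\,\svectorize(\Delta X)=0$ or lie in $\mathcal{R}(\mathcal{A}^T)$, and neither is guaranteed without further assumptions. The clean argument is on the \emph{range} side rather than the kernel side: since $X^*$ vanishes outside the $(\mathcal{B},\mathcal{B})$ block and $S^*$ outside the $(\mathcal{N},\mathcal{N})$ block, the $(\mathcal{T},\mathcal{T})$ block of $S^*\Delta X+\Delta X S^*+X^*\Delta S+\Delta S X^*$ is identically zero for every $(\Delta X,\Delta S)$. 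Hence if $n_{\mathcal{T}}>0$ the third block row of $J$ cannot hit any vector with nonzero $(\mathcal{T},\mathcal{T})$ component, so $J$ is not surjective and therefore singular. With this replacement your contrapositive goes through without the delicate bookkeeping you anticipated; the primal- and dual-nondegeneracy failures are handled exactly as you describe.
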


\vspace{5px}
\noindent
Consequently, it can be deducted from Lemma~\ref{nonsingularity_conditions} and the implicit function theorem~\cite{RD09} that if $\big(X^*(\bar{\epsilon}),y^*(\bar{\epsilon}),S^*(\bar{\epsilon})\big)$ is unique and strictly complementary, then there exists $\xi > 0$ so that $\big(X^*(\epsilon),y^*(\epsilon),S^*(\epsilon)\big)$ is unique and continuously differentiable on $(\bar{\epsilon}-\xi,\bar{\epsilon}+\xi)$. This together with Theorem~\ref{nonlinearity_continuity} implies the existence of a nonlinearity interval around $\bar{\epsilon}$. Under additional conditions, spelled out in Theorem~\ref{nonsingularity_sufficient}, this nonlinearity interval coincides with the open interval on which the Jacobian is nonsingular.
\begin{theorem}\label{nonsingularity_sufficient}
Let $\mathcal{I}_{\mathrm{reg}}$ be an open interval of maximal length on which $J(X^*(\epsilon),y^*(\epsilon),S^*(\epsilon)\big)$ is nonsingular. If the strict complementarity condition fails at a boundary point of $\mathcal{I}_{\mathrm{reg}}$, if there exists any, then the boundary point is a transition point of the optimal partition. In particular, the result holds when both $\mathcal{P}^*(\epsilon)$ and $\mathcal{D}^*(\epsilon)$ are singleton at the boundary point of $\mathcal{I}_{\mathrm{reg}}$.
\end{theorem}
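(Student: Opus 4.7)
The proof hinges on the characterization provided by Lemma~\ref{nonsingularity_conditions}: on $\mathcal{I}_{\mathrm{reg}}$ every maximally complementary solution is strictly complementary and both primal and dual nondegenerate, so $\mathcal{T}(\epsilon)=\{0\}$, equivalently $\dim(\mathcal{B}(\epsilon))+\dim(\mathcal{N}(\epsilon))=n$, throughout $\mathcal{I}_{\mathrm{reg}}$. At the boundary point $\bar{\epsilon}$ strict complementarity fails by assumption, so $\dim(\mathcal{T}(\bar{\epsilon}))>0$ and $\dim(\mathcal{B}(\bar{\epsilon}))+\dim(\mathcal{N}(\bar{\epsilon}))<n$. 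Since $\mathcal{I}_{\mathrm{reg}}$ is open with $\bar{\epsilon}$ on its boundary, for every $\xi>0$ the set $(\bar{\epsilon}-\xi,\bar{\epsilon}+\xi)\cap \mathcal{I}_{\mathrm{reg}}$ is nonempty; picking any $\epsilon$ in that intersection yields the strict inequality $\dim(\mathcal{B}(\epsilon))+\dim(\mathcal{N}(\epsilon))=n>\dim(\mathcal{B}(\bar{\epsilon}))+\dim(\mathcal{N}(\bar{\epsilon}))$, which forces at least one of $\dim(\mathcal{B}(\epsilon))\neq \dim(\mathcal{B}(\bar{\epsilon}))$ or $\dim(\mathcal{N}(\epsilon))\neq \dim(\mathcal{N}(\bar{\epsilon}))$. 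This establishes the rank-change clause of Definition~\ref{transition_point}.

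It remains to verify that the invariancy set $\mathcal{I}_{\mathrm{inv}}$ containing $\bar{\epsilon}$ is the singleton $\{\bar{\epsilon}\}$. Since, as noted after Lemma~\ref{constancy_interval}, $\mathcal{I}_{\mathrm{inv}}$ is either a singleton or an open interval, it suffices to rule out the latter. Arguing by contradiction, suppose $\mathcal{I}_{\mathrm{inv}}=(\alpha,\beta)$ with $\alpha<\bar{\epsilon}<\beta$. Then $\pi(\epsilon)=\pi(\bar{\epsilon})$ throughout $(\alpha,\beta)$, and in particular $\dim(\mathcal{T}(\epsilon))=\dim(\mathcal{T}(\bar{\epsilon}))>0$ there. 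But $(\alpha,\beta)$ is a neighborhood of $\bar{\epsilon}$ and therefore intersects $\mathcal{I}_{\mathrm{reg}}$, where by the first paragraph $\dim(\mathcal{T})=0$: a contradiction. Hence $\mathcal{I}_{\mathrm{inv}}=\{\bar{\epsilon}\}$ and, combined with the first paragraph, $\bar{\epsilon}$ qualifies as a transition point.

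For the ``in particular'' assertion I would show that singleton-ness of $\mathcal{P}^*(\bar{\epsilon})$ and $\mathcal{D}^*(\bar{\epsilon})$ automatically forces strict complementarity to fail at $\bar{\epsilon}$, so the main hypothesis is satisfied free of charge. Indeed, if strict complementarity held, the characterization recalled in Section~\ref{Prima_dual_nondeg} (strict complementarity together with a unique primal optimal solution implies the existence of a dual nondegenerate optimal solution, and symmetrically) would give both primal and dual nondegeneracy at $\bar{\epsilon}$. Lemma~\ref{nonsingularity_conditions} would then make $J$ nonsingular at $\bar{\epsilon}$, and the implicit function theorem argument already discussed just before the theorem would produce a neighborhood of $\bar{\epsilon}$ on which the primal-dual optimal solution is continuously differentiable and $J$ is nonsingular, contradicting the maximality of $\mathcal{I}_{\mathrm{reg}}$ together with $\bar{\epsilon}\notin \mathcal{I}_{\mathrm{reg}}$.

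The main conceptual obstacle is the singleton-invariancy step: it is tempting to declare $\{\bar{\epsilon}\}$ singleton merely because ``the partition changes at $\bar{\epsilon}$,'' but one has to actually exploit the presence of $\mathcal{I}_{\mathrm{reg}}$-points arbitrarily close to $\bar{\epsilon}$ to produce the contradiction between strict complementarity on $\mathcal{I}_{\mathrm{reg}}$ and $\dim(\mathcal{T}(\bar{\epsilon}))>0$. Everything else is routine bookkeeping with Lemma~\ref{nonsingularity_conditions} together with the dichotomy ``singleton or open interval'' for invariancy sets established after Lemma~\ref{constancy_interval}.
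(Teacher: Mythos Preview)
Your argument is correct and follows essentially the same route as the paper's own proof: compare $\dim(\mathcal{B})+\dim(\mathcal{N})=n$ on $\mathcal{I}_{\mathrm{reg}}$ (via Lemma~\ref{nonsingularity_conditions}) against $\dim(\mathcal{B}(\bar\epsilon))+\dim(\mathcal{N}(\bar\epsilon))<n$ at the boundary, and for the ``in particular'' clause argue that singletons plus strict complementarity would force nonsingularity of $J$ at $\bar\epsilon$, contradicting maximality. Your write-up is in fact more careful than the paper's terse version, since you explicitly verify the singleton-invariancy requirement in Definition~\ref{transition_point}, which the paper leaves implicit.
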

\begin{proof}
The first part is immediate, since at least one of $\rank\big(X^*(\epsilon)\big)$ or $\rank\big(S^*(\epsilon)\big)$ must decrease at the boundary point of $\mathcal{I}_{\mathrm{reg}}$. The second part implies that the strict complementarity condition must fail at the boundary point, since otherwise the Jacobian would be nonsingular.
\end{proof}

\vspace{5px}
\noindent
Observe that if the strict complementarity condition holds at a boundary point of $\mathcal{I}_{\mathrm{reg}}$, then $\mathcal{I}_{\mathrm{reg}}$ might be just a subinterval of a nonlinearity interval. This case can be demonstrated by Example~\ref{motivation_nonlinearity} where the Jacobian is singular at a non-transition point $\epsilon=\frac12$. To see the nonsingularity elsewhere, using a common orthonormal eigenvector basis~\eqref{optimal_partition_nonlinearity} and the conditions in Section~\ref{Prima_dual_nondeg}, one can check that the matrices
\begin{align*}
\begingroup 
\setlength\arraycolsep{.5pt}
\begin{pmatrix} 0 & \ 0 & \ 0\\0 & \ \big(\frac{2\mathrm{sgn}(2\epsilon-1)}{\nu_1}\big)^2 & \ \frac{2(1-2\epsilon)\mathrm{sgn}(2\epsilon-1)}{\nu_1\nu_2}\\0 & \ \frac{2(1-2\epsilon)\mathrm{sgn}(2\epsilon-1)}{\nu_1\nu_2} & \ 0 \end{pmatrix} \endgroup, 
\begingroup 
\setlength\arraycolsep{.5pt}
\begin{pmatrix} \frac12 & \ -\frac{|2\epsilon-1|}{\sqrt{2}\nu_1} & \ -\frac{1}{\sqrt{2}\nu_2}\\ -\frac{|2\epsilon-1|}{\sqrt{2}\nu_1} & \ \big(\frac{2\epsilon-1}{\nu_1}\big)^2 & \ \frac{|2\epsilon-1|}{\nu_1\nu_2}\\-\frac{1}{\sqrt{2}\nu_2} & \ \frac{|2\epsilon-1|}{\nu_1\nu_2} & \ 0 \end{pmatrix} \endgroup, 
\begingroup 
\setlength\arraycolsep{.5pt}
\begin{pmatrix} \frac12 & \ \frac{|2\epsilon-1|}{\sqrt{2}\nu_1} & \ \frac{1}{\sqrt{2}\nu_2}\\ \frac{|2\epsilon-1|}{\sqrt{2}\nu_1} & \ \big(\frac{2\epsilon-1}{\nu_1}\big)^2 & \ \frac{|2\epsilon-1|}{\nu_1\nu_2}\\ \frac{1}{\sqrt{2}\nu_2} & \ \frac{|2\epsilon-1|}{\nu_1\nu_2} & \ 0 \end{pmatrix} \endgroup
\end{align*}
are linearly independent for all $\epsilon \in (-\frac12,\frac32) \setminus \{\frac12\}$, where
\begin{align*}
\nu_1:=\sqrt{2(2\epsilon-1)^2+4}, \quad \text{and} \quad \nu_2:=\sqrt{(2\epsilon-1)^2+2}.
\end{align*}
Furthermore, we can observe that the following matrices span $\mathbb{S}^2$:
\begin{align*}
\begingroup 
\setlength\arraycolsep{.5pt}
\begin{pmatrix} 0 & \ 0 \\0 & \ \big(\frac{2\mathrm{sgn}(2\epsilon-1)}{\nu_1}\big)^2 \end{pmatrix} \endgroup,  
\begingroup 
\setlength\arraycolsep{.5pt}\begin{pmatrix} \frac12 & \ -\frac{|2\epsilon-1|}{\sqrt{2}\nu_1} \\-\frac{|2\epsilon-1|}{\sqrt{2}\nu_1} & \ \big(\frac{2\epsilon-1}{\nu_1}\big)^2 \end{pmatrix} \endgroup,   
\begingroup 
\setlength\arraycolsep{.5pt}
\begin{pmatrix} \frac12 & \ \frac{|2\epsilon-1|}{\sqrt{2}\nu_1} \\ \frac{|2\epsilon-1|}{\sqrt{2}\nu_1} & \ \big(\frac{2\epsilon-1}{\nu_1}\big)^2 \end{pmatrix}\endgroup,
\end{align*}
which implies the nondegeneracy of the unique dual optimal solution for all $\epsilon \in (-\frac12,\frac32) \setminus \{\frac12\}$. At $\epsilon=\frac12$ the dual nondegeneracy condition does not hold, since the matrices
\begin{align*}
\begin{pmatrix} 0 & 1\\\frac{1}{\sqrt{2}} & 0\\ \frac{1}{\sqrt{2}} & 0 \end{pmatrix}^T A^i \begin{pmatrix} 0 & 1\\ \frac{1}{\sqrt{2}} & 0\\ \frac{1}{\sqrt{2}} & 0 \end{pmatrix}, \qquad i=1,\ldots,m
\end{align*}
fail to span $\mathbb{S}^2$. All this yields the nonsingularity of the Jacobian on $(-\frac12,\frac32)\setminus\{\frac12\}$. 

\vspace{5px}
\noindent
Theorem~\ref{nonlinearity_continuity} indicates that at a transition point $\bar{\epsilon}$ which satisfies the strict complementarity condition, the eigenvalues of $X^*(\epsilon)$ or $S^*(\epsilon)$ must be discontinuous. Thus, the following result is immediate.
\begin{corollary}\label{transition_point_property}
At a transition point $\bar{\epsilon}$, at least one of the strict complementarity, primal nondegeneracy, or dual nondegeneracy conditions has to fail.  
\end{corollary}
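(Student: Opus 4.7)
The plan is to argue by contraposition: I assume that all three conditions---strict complementarity, primal nondegeneracy, and dual nondegeneracy---simultaneously hold at $\bar{\epsilon}$, and then show this precludes $\bar{\epsilon}$ being a transition point. First, by the discussion in Section~\ref{Prima_dual_nondeg}, primal and dual nondegeneracy together with strict complementarity force $\mathcal{P}^*(\bar{\epsilon})$ and $\mathcal{D}^*(\bar{\epsilon})$ to be singletons. Simultaneously, Lemma~\ref{nonsingularity_conditions} guarantees that $J\!\big(X^*(\bar{\epsilon}),y^*(\bar{\epsilon}),S^*(\bar{\epsilon})\big)$ is nonsingular. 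I then apply the implicit function theorem to the KKT system~\eqref{KKT_conditions} viewed as equations in $(X,y,S)$ parametrized by $\epsilon$, exactly as invoked in the paragraph preceding Theorem~\ref{nonsingularity_sufficient}. This produces some $\xi>0$ on which the optimality system admits a unique, continuously differentiable solution $\big(X^*(\epsilon),y^*(\epsilon),S^*(\epsilon)\big)$.

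Next, I would invoke Weyl-type continuity of the spectrum of a symmetric matrix: since $X^*(\cdot)$ and $S^*(\cdot)$ depend continuously on $\epsilon$ on $(\bar{\epsilon}-\xi,\bar{\epsilon}+\xi)$, and since $\rank\!\big(X^*(\bar{\epsilon})\big)+\rank\!\big(S^*(\bar{\epsilon})\big)=n$ by strict complementarity, the positive eigenvalues of $X^*(\bar{\epsilon})$ and $S^*(\bar{\epsilon})$ remain strictly positive on a (possibly smaller) neighborhood. Combined with the complementarity identity $X^*(\epsilon)S^*(\epsilon)=0$, which forces $\mathcal{R}(X^*(\epsilon))\perp\mathcal{R}(S^*(\epsilon))$ and hence $\rank(X^*(\epsilon))+\rank(S^*(\epsilon))\le n$, the two ranks are therefore \emph{exactly} preserved, so strict complementarity persists throughout that neighborhood. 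In particular, for every sequence $\{\epsilon_k\}\to\bar{\epsilon}$ the unique optimal triples converge to a strictly complementary limit, so Theorem~\ref{nonlinearity_continuity} applies and $\bar{\epsilon}$ lies in a nonlinearity interval $\mathcal{I}_{\mathrm{non}}$.

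Finally, I close the argument by observing that Definition~\ref{nonlinearity_interval} forces $\dim\!\big(\mathcal{B}(\epsilon)\big)$ and $\dim\!\big(\mathcal{N}(\epsilon)\big)$ to be constant on all of $\mathcal{I}_{\mathrm{non}}$; choosing any $\xi'>0$ small enough that $(\bar{\epsilon}-\xi',\bar{\epsilon}+\xi')\subseteq\mathcal{I}_{\mathrm{non}}$ produces a neighborhood in which neither dimension changes, in direct contradiction to Definition~\ref{transition_point}. The most delicate step is the second one, namely rigorously verifying that strict complementarity is an open condition along the implicit-function branch rather than only being morally plausible from nonsingularity of the Jacobian; once that is in hand, the remainder reduces to direct bookkeeping with definitions and the already-proved theorems.
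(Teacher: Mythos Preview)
Your proposal is correct and follows essentially the same contrapositive route as the paper: the paper's one-sentence proof simply invokes Lemma~\ref{nonsingularity_conditions} together with ``the subsequent discussion'' (i.e., the implicit function theorem paragraph and Theorem~\ref{nonlinearity_continuity}), and you have unpacked precisely those ingredients. Note that your middle paragraph on rank preservation via eigenvalue continuity is exactly the content of the proof of Theorem~\ref{nonlinearity_continuity}, which you then invoke anyway, so that step is redundant (though not incorrect).
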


\begin{proof}
If all the conditions hold, then $\bar{\epsilon}$ would belong to a nonlinearity interval by Lemma~\ref{nonsingularity_conditions} and the subsequent discussion.   
\end{proof}
\noindent
In other words, the Jacobian of the optimality conditions must be singular at a transition point. However, the reverse direction is not true as can be verified in Example~\ref{motivation_nonlinearity}. For this case, the dual nondegeneracy condition fails at $\epsilon=\frac12$, while $\epsilon$ is not a transition point.
%
\section{Sensitivity of the approximation of the optimal partition}\label{perturbation_optimal_partition}
Thus far, we have investigated the sensitivity of the optimal set mapping at a transition point or on a nonlinearity interval. Given the fact that $\pi(\epsilon)$ varies on a nonlinearity interval, see~\eqref{optimal_partition_nonlinearity}, we would like to derive upper bounds on a metric which measures the sensitivity of the approximation of the optimal partition, i.e., the subspaces spanned by the eigenvectors whose accumulation points form orthonormal bases for the optimal partition. Throughout this section, unless stated otherwise, we always assume that $\mu$ is positive, and $\epsilon = 0$ belongs to a nonlinearity interval. For the sake of brevity, we drop $\epsilon$ from the central solution, optimal partition, and optimal solutions at $\epsilon = 0$. 

\vspace{5px}
\noindent
Consider an equivalent form of the perturbed central path equations as follows 
\begin{equation}\label{alternative_CP}
F\big(X,y,S,\mu,\epsilon\big):=\begin{pmatrix}
\mathcal{A}\svectorize(X) - b\\
\mathcal{A}^Ty + \svectorize(S) - \svectorize(C) - \epsilon\svectorize(\bar{C})\\
\svectorize(XS + SX - 2\mu I_n)\\ 
\end{pmatrix}=0, \quad
X,S \succeq 0.
\end{equation}
It can be shown that system~\eqref{alternative_CP} is solvable for all $\epsilon$ in a neighborhood of $0$. This directly follows from the nonsingularity of the Jacobian, see e.g., Theorem 3.3 in~\cite{Kl02}, the implicit function theorem~\cite{RD09}, and continuity arguments. For every $\epsilon$ the unique solution of~\eqref{alternative_CP} is denoted by $\big(X^{\mu}(\epsilon),y^{\mu}(\epsilon),S^{\mu}(\epsilon)\big)$ and a common eigenvector basis is represented by $Q^{\mu}_{\epsilon}$. The analogue of $\tilde{\mu}$ at $\epsilon$ is denoted by $\tilde{\mu}(\epsilon)$. 

\vspace{5px}
\noindent
Suppose that for $\epsilon=0$ a central solution $\big(X^{\mu},y^{\mu},S^{\mu}\big)$ is given, where $\mu < \tilde{\mu}$ as defined in~\eqref{upper_bound_partition}. The eigenvectors of $X^{\mu}$ and $S^{\mu}$ can be rearranged so that
\begin{align*}
Q^{\mu}:=\big(Q^{\mu}_{\mathcal{B}}, Q^{\mu}_{\mathcal{T}}, Q^{\mu}_{\mathcal{N}}\big).
\end{align*}
We quantify the sensitivity of $\mathcal{R}\big(Q^{\mu}_{\mathcal{B}}\big)$ and $\mathcal{R}\big(Q^{\mu}_{\mathcal{N}}\big)$, when $\epsilon$ belongs to a sufficiently small neighborhood of $0$ in the nonlinearity interval. We rely on the following theorem adopted from~\cite{GL13} and Theorem 4.11 in~\cite{St73}. For the ease of exposition, we have tailored the theorem for central solutions by introducing
\begin{align*}
\Xi_X^{\mu}(\epsilon) := X^{\mu}(\epsilon) - X^{\mu}, \qquad \text{and} \qquad \Xi_S^{\mu}(\epsilon) := S^{\mu}(\epsilon) - S^{\mu}.
\end{align*}
Recall that the distance between two subspaces is defined in~\eqref{metric}, which is a metric on the set of subspaces of $\mathbb{R}^n$~\cite{St73}.
\begin{theorem}\label{Stewart_eigenspace_theorem}
Let a central solution $\big(X^{\mu},y^{\mu},S^{\mu}\big)$ be given, and let $\epsilon$ belong to a nonlinearity interval in a neighborhood of $0$ such that 
\begin{align}
\mu &< \min\{\tilde{\mu},\tilde{\mu}(\epsilon)\}, \label{perturbed_threshold} \\
\big \|\Xi_X^{\mu}(\epsilon) \big \| &\le \frac{\lambda_{[n_{\mathcal{B}}]}(X^{\mu})-\lambda_{[n_{\mathcal{B}}+1]}(X^{\mu})}{5}, \label{condition_on_perturbation_X}\\
\big \|\Xi_S^{\mu}(\epsilon) \big \| &\le \frac{\lambda_{[n_{\mathcal{N}}]}(S^{\mu})-\lambda_{[n_{\mathcal{N}}+1]}(S^{\mu})}{5}  \nonumber
\end{align}
hold. Then there exist $V^{\mu}_{\epsilon} \in \mathbb{R}^{(n-n_{\mathcal{B}}) \times n_{\mathcal{B}}}$ and $W^{\mu}_{\epsilon} \in \mathbb{R}^{(n-n_{\mathcal{N}}) \times n_{\mathcal{N}}}$ such that the columns of $\big(Q^{\mu}_{\mathcal{B}} + Q^{\mu}_{\mathcal{T} \cup \mathcal{N}} V^{\mu}_{\epsilon}\big)\big(I_{n_{\mathcal{B}}}+(V^{\mu}_{\epsilon})^T V^{\mu}_{\epsilon}\big)^{-\frac12}$ and $\big(Q^{\mu}_{\mathcal{N}} + Q^{\mu}_{\mathcal{B} \cup \mathcal{T}} W^{\mu}_{\epsilon}\big)\big(I_{n_{\mathcal{N}}}+(W^{\mu}_{\epsilon})^TW^{\mu}_{\epsilon}\big)^{-\frac12}$ form orthonormal bases for $\mathcal{R}\big(Q_{\mathcal{B}(\epsilon)}^{\mu}\big)$ and $\mathcal{R}\big(Q_{\mathcal{N}(\epsilon)}^{\mu}\big)$. Furthermore, we have
\begin{align}
\distance\!\big(\mathcal{R}\big(Q^{\mu}_{\mathcal{B}}\big),\mathcal{R}\big(Q^{\mu}_{\mathcal{B}(\epsilon)}\big)\big) &\le \frac{4 \big \|(Q^{\mu}_{\mathcal{B}})^T \Xi_X^{\mu}(\epsilon) Q^{\mu}_{\mathcal{T} \cup \mathcal{N}} \big \|}{\lambda_{[n_{\mathcal{B}}]}(X^{\mu})-\lambda_{[n_{\mathcal{B}}+1]}(X^{\mu})}, \label{estimated_distance_B} \\
\distance\!\big(\mathcal{R}\big(Q^{\mu}_{\mathcal{N}}\big),\mathcal{R}\big(Q^{\mu}_{\mathcal{N}(\epsilon)}\big)\big) &\le \frac{4 \big \|(Q^{\mu}_{\mathcal{N}})^T \Xi_S^{\mu}(\epsilon) Q^{\mu}_{\mathcal{B} \cup \mathcal{T}} \big \|}{\lambda_{[n_{\mathcal{N}}]}(S^{\mu})-\lambda_{[n_{\mathcal{N}}+1]}(S^{\mu})}. \label{estimated_distance_N}
\end{align}
\end{theorem}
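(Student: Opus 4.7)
The statement is essentially a two-fold application of Stewart's classical perturbation theorem for invariant subspaces of symmetric matrices (Theorem 4.11 in \cite{St73}), once to the symmetric matrix $X^{\mu}$ with additive perturbation $\Xi^{\mu}_{X}(\epsilon)$, and once to $S^{\mu}$ with perturbation $\Xi^{\mu}_{S}(\epsilon)$. The plan is to set up each application so that the ``top'' invariant subspace (in the sense of the $n_{\mathcal{B}}$ largest eigenvalues of $X^{\mu}$, respectively $n_{\mathcal{N}}$ largest of $S^{\mu}$) gets identified with the corresponding approximation of the optimal partition, and then to read off (4.4)--(4.5) directly from the Stewart bound.

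First I would consider $X^{\mu}$. With the partition $Q^{\mu}=(Q^{\mu}_{\mathcal{B}},Q^{\mu}_{\mathcal{T}\cup\mathcal{N}})$, the diagonalization $X^{\mu} = Q^{\mu}\Lambda(X^{\mu})(Q^{\mu})^T$ splits into a leading block of the $n_{\mathcal{B}}$ largest eigenvalues and a trailing block. The quantity $\lambda_{[n_{\mathcal{B}}]}(X^{\mu})-\lambda_{[n_{\mathcal{B}}+1]}(X^{\mu})$ is precisely the separation between the two eigenvalue clusters, and $\text{sep}(\cdot,\cdot)$ in Stewart's theorem reduces to this scalar gap in the symmetric, diagonal case. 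Hypothesis (4.2) is then exactly the quantitative smallness condition ``$5\|\Xi^{\mu}_{X}(\epsilon)\|$ does not exceed the gap'' needed to satisfy the quadratic inequality in Stewart's theorem (the constant $5$ is chosen to absorb the cross term $2\|E\|$ appearing in the modified gap $\delta=\mathrm{gap}-2\|E\|$, and to give the clean factor $4$ in (4.4)). Stewart's theorem then guarantees a matrix $V^{\mu}_{\epsilon}\in\mathbb{R}^{(n-n_{\mathcal{B}})\times n_{\mathcal{B}}}$ whose columns parametrize an invariant subspace of $X^{\mu}(\epsilon)=X^{\mu}+\Xi^{\mu}_{X}(\epsilon)$ via the orthonormalized basis $(Q^{\mu}_{\mathcal{B}}+Q^{\mu}_{\mathcal{T}\cup\mathcal{N}}V^{\mu}_{\epsilon})(I_{n_{\mathcal{B}}}+(V^{\mu}_{\epsilon})^TV^{\mu}_{\epsilon})^{-\frac12}$, together with the bound $\|V^{\mu}_{\epsilon}\|\le 4\|(Q^{\mu}_{\mathcal{B}})^T\Xi^{\mu}_{X}(\epsilon)Q^{\mu}_{\mathcal{T}\cup\mathcal{N}}\|/(\lambda_{[n_{\mathcal{B}}]}(X^{\mu})-\lambda_{[n_{\mathcal{B}}+1]}(X^{\mu}))$. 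Since the distance between two subspaces with orthonormal bases related by the above formula satisfies $\distance\le\|V^{\mu}_{\epsilon}\|$, estimate (4.4) follows immediately.

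The main obstacle, and the only step that really uses the SDO structure rather than pure matrix perturbation, is identifying the Stewart invariant subspace of $X^{\mu}(\epsilon)$ with the approximation $\mathcal{R}(Q^{\mu}_{\mathcal{B}(\epsilon)})$ of $\mathcal{B}(\epsilon)$. Here is where assumption (4.1) is invoked: $\mu<\tilde{\mu}$ guarantees, by Lemma~\ref{bounds_on_muCenters} and the derivation of (2.13) at $\epsilon=0$, that the $n_{\mathcal{B}}$ largest eigenvalues of $X^{\mu}$ are separated from the rest (so the leading invariant subspace is exactly $\mathcal{R}(Q^{\mu}_{\mathcal{B}})$), and $\mu<\tilde{\mu}(\epsilon)$ guarantees the analogous splitting for $X^{\mu}(\epsilon)$ with the same dimension $n_{\mathcal{B}(\epsilon)}=n_{\mathcal{B}}$ (the dimensions agree because $\epsilon$ lies in a nonlinearity interval containing $0$, by Definition~\ref{nonlinearity_interval}). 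Hence the Stewart invariant subspace, being close to $\mathcal{R}(Q^{\mu}_{\mathcal{B}})$ and carrying exactly $n_{\mathcal{B}}$ eigenvalues of $X^{\mu}(\epsilon)$, must coincide with the leading invariant subspace of $X^{\mu}(\epsilon)$, which by definition is $\mathcal{R}(Q^{\mu}_{\mathcal{B}(\epsilon)})$.

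Finally, I would repeat the argument verbatim with the roles reversed: apply Stewart's theorem to $S^{\mu}$ with perturbation $\Xi^{\mu}_{S}(\epsilon)$, partitioned according to the $n_{\mathcal{N}}$ largest eigenvalues of $S^{\mu}$ versus the rest. The hypothesis (4.3) plays the same role as (4.2), and (4.1) again secures the identification of the resulting invariant subspace with $\mathcal{R}(Q^{\mu}_{\mathcal{N}(\epsilon)})$ via Lemma~\ref{bounds_on_muCenters}. This delivers $W^{\mu}_{\epsilon}$ together with the orthonormal basis and the bound (4.5), completing the proof.
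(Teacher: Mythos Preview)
Your proposal follows the same route as the paper: apply Stewart's invariant-subspace perturbation theorem (the paper cites it as Theorem~8.1.10 in \cite{GL13}; you cite Theorem~4.11 in \cite{St73}) to $X^{\mu}$ with perturbation $\Xi_X^{\mu}(\epsilon)$, then symmetrically to $S^{\mu}$, and read off the distance bound from $\|V^{\mu}_{\epsilon}\|$.

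The one place where your sketch is too loose is the identification step. You write that the Stewart subspace, ``being close to $\mathcal{R}(Q^{\mu}_{\mathcal{B}})$ and carrying exactly $n_{\mathcal{B}}$ eigenvalues of $X^{\mu}(\epsilon)$, must coincide with the leading invariant subspace of $X^{\mu}(\epsilon)$''. Closeness and the dimension count alone do not rule out that the Stewart subspace picks up an eigenvector from the lower cluster of $X^{\mu}(\epsilon)$; you need to show that the $n_{\mathcal{B}}$ eigenvalues it carries are precisely the \emph{largest} ones. The paper does this explicitly: it invokes Theorem~4.12 in \cite{St73}, which identifies the eigenvalues of the two diagonal blocks $D^{\mu}_{\mathcal{B}(\epsilon)}$ and $D^{\mu}_{\mathcal{T}(\epsilon)\cup\mathcal{N}(\epsilon)}$ with those of
\[
(Q^{\mu}_{\mathcal{B}})^T X^{\mu}Q^{\mu}_{\mathcal{B}} + (Q^{\mu}_{\mathcal{B}})^T\Xi_X^{\mu}(\epsilon) Q^{\mu}_{\mathcal{B}} + (Q^{\mu}_{\mathcal{B}})^T \Xi_X^{\mu}(\epsilon) Q^{\mu}_{\mathcal{T} \cup \mathcal{N}} V^{\mu}_{\epsilon}
\]
and its complementary analogue. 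Combining this with \eqref{condition_on_perturbation_X} and the bound $\|V^{\mu}_{\epsilon}\|\le 4/5$ yields $\lambda_{\min}(D^{\mu}_{\mathcal{B}(\epsilon)})>\lambda_{\max}(D^{\mu}_{\mathcal{T}(\epsilon)\cup\mathcal{N}(\epsilon)})$, so the Stewart subspace really is the top eigenspace of $X^{\mu}(\epsilon)$. In the paper's argument, condition~\eqref{perturbed_threshold} at $\epsilon$ is used only to make the partition $(Q^{\mu}_{\mathcal{B}(\epsilon)},Q^{\mu}_{\mathcal{T}(\epsilon)},Q^{\mu}_{\mathcal{N}(\epsilon)})$ well-defined; the eigenvalue separation for the perturbed matrix comes from \eqref{condition_on_perturbation_X} and the Stewart machinery, not from Lemma~\ref{bounds_on_muCenters} applied at $\epsilon$. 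With that one step filled in, your proof is complete and matches the paper's.
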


\begin{proof}
The proof is on the basis of perturbation bounds for invariant subspaces of a matrix, as stated in Theorem 8.1.10 in~\cite{GL13}. It is known that $\mathcal{R}\big(Q^{\mu}_{\mathcal{B}}\big)$, $\mathcal{R}\big(Q^{\mu}_{\mathcal{T}}\big)$, and $\mathcal{R}\big(Q^{\mu}_{\mathcal{N}}\big)$ are invariant subspaces of both $X^{\mu}$ and $S^{\mu}$, since, e.g., $X^{\mu} \mathcal{R}\big(Q^{\mu}_{\mathcal{B}}\big) \subseteq \mathcal{R}\big(Q^{\mu}_{\mathcal{B}}\big)$ and $S^{\mu} \mathcal{R}\big(Q^{\mu}_{\mathcal{N}}\big) \subseteq \mathcal{R}\big(Q^{\mu}_{\mathcal{N}}\big)$. We only state the proof for an invariant subspace of $X^{\mu}$.

\vspace{5px}
\noindent
Using the bounds in Lemma~\ref{bounds_on_muCenters} and~\eqref{perturbed_threshold}, it is easy to verify that 
\begin{align}
\lambda_{[n_{\mathcal{B}}]}(X^{\mu})-\lambda_{[n_{\mathcal{B}}+1]}(X^{\mu}) > 0. \label{lower_bound_sep_B}
\end{align}
All this implies that the eigenvalues of $(Q^{\mu}_{\mathcal{B}})^T X^{\mu}Q^{\mu}_{\mathcal{B}}$ are properly separated from the eigenvalues of $(Q^{\mu}_{\mathcal{T \cup N}})^TX^{\mu}Q^{\mu}_{\mathcal{T \cup N}}$. Therefore, if $\Xi_X^{\mu}(\epsilon)$ is so small that~\eqref{condition_on_perturbation_X} holds, then there exist, see Theorem 8.1.10 in~\cite{GL13}, $V^{\mu}_{\epsilon} \in \mathbb{R}^{(n-n_{\mathcal{B}}) \times n_{\mathcal{B}}}$ and an orthogonal matrix   
\begin{align*}
Y^{\mu}_{\epsilon}:=\!
\begingroup 
\setlength\arraycolsep{.8pt}
\begin{pmatrix} I_{n_{\mathcal{B}}} & -(V^{\mu}_{\epsilon})^T\\V^{\mu}_{\epsilon} & I_{n-n_{\mathcal{B}}} \end{pmatrix} \endgroup 
\begingroup 
\setlength\arraycolsep{.8pt} \begin{pmatrix} \big(I_{n_{\mathcal{B}}}+(V^{\mu}_{\epsilon})^T V^{\mu}_{\epsilon}\big)^{-\frac12} & 0\\0 & \big(I_{n-n_{\mathcal{B}}}+V^{\mu}_{\epsilon} (V^{\mu}_{\epsilon})^T\big)^{-\frac12} \end{pmatrix} \endgroup
\end{align*}
in which
\begin{align}
\big\|V^{\mu}_{\epsilon}\big\| &\le \frac{4 \big \|(Q^{\mu}_{\mathcal{B}})^T \Xi_X^{\mu}(\epsilon) Q^{\mu}_{\mathcal{T} \cup \mathcal{N}} \big \|}{\lambda_{[n_{\mathcal{B}}]}(X^{\mu})-\lambda_{[n_{\mathcal{B}}+1]}(X^{\mu})}, \label{upper_bound_perturbation} 
\end{align} 
such that the first $n_{\mathcal{B}}$ columns of $Q^{\mu} Y^{\mu}_{\epsilon}$ form an orthonormal basis for an invariant subspace of $X^{\mu}(\epsilon)$. In other words, we get
\begin{align}
(Q^{\mu} Y^{\mu}_{\epsilon})^T X^{\mu}(\epsilon) Q^{\mu} Y^{\mu}_{\epsilon} = \begin{pmatrix} D^{\mu}_{\mathcal{B}(\epsilon)} & 0\\0 & D^{\mu}_{\mathcal{T(\epsilon) \cup N(\epsilon)}} \end{pmatrix}, \label{invariant_subspace_perturbed}
\end{align}
where $D^{\mu}_{\mathcal{B}(\epsilon)} \in \mathbb{S}^{n_{\mathcal{B}}}$ and $D^{\mu}_{\mathcal{T(\epsilon) \cup N(\epsilon)}} \in \mathbb{S}^{n-n_{\mathcal{B}}}$ are positive definite matrices. The eigenvalues of $D^{\mu}_{\mathcal{B}(\epsilon)}$ and $D^{\mu}_{\mathcal{T(\epsilon) \cup N(\epsilon)}}$ are equal to those of 
\begin{align*}
&(Q^{\mu}_{\mathcal{B}})^T X^{\mu}Q^{\mu}_{\mathcal{B}} + (Q^{\mu}_{\mathcal{B}})^T\Xi_X^{\mu}(\epsilon) Q^{\mu}_{\mathcal{B}} + (Q^{\mu}_{\mathcal{B}})^T \Xi_X^{\mu}(\epsilon) Q^{\mu}_{\mathcal{T} \cup \mathcal{N}} V^{\mu}_{\epsilon},\\
 &(Q^{\mu}_{\mathcal{T \cup N}})^TX^{\mu}Q^{\mu}_{\mathcal{T \cup N}} + (Q^{\mu}_{\mathcal{T \cup N}})^T\Xi_X^{\mu}(\epsilon) Q^{\mu}_{\mathcal{T \cup N}} - V^{\mu}_{\epsilon}(Q^{\mu}_{\mathcal{B}})^T \Xi_X^{\mu}(\epsilon) Q^{\mu}_{\mathcal{T} \cup \mathcal{N}}, 
 \end{align*}
respectively, see Theorem 4.12 in~\cite{St73}. Condition~\eqref{perturbed_threshold} allows for the identification of $\big(Q_{\mathcal{B}(\epsilon)}^{\mu},Q_{\mathcal{T}(\epsilon)}^{\mu},Q_{\mathcal{N}(\epsilon)}^{\mu}\big)$. On the other hand, conditions~\eqref{condition_on_perturbation_X} and~\eqref{upper_bound_perturbation} guarantee that 
\begin{align*}
\lambda_{\min}\big(D^{\mu}_{\mathcal{B}(\epsilon)}\big) >  \lambda_{\max}\big(D^{\mu}_{\mathcal{T(\epsilon) \cup N(\epsilon)}}\big) > 0,
\end{align*}
i.e., the eigenvalues of $D^{\mu}_{\mathcal{B}(\epsilon)}$ and $D^{\mu}_{\mathcal{T(\epsilon) \cup N(\epsilon)}}$ are properly separated at $\mu$. Consequently, we can conclude that the first $n_{\mathcal{B}}$ columns of $Q^{\mu} Y^{\mu}_{\epsilon}$ form an orthonormal basis for $\mathcal{R}\big(Q^{\mu}_{\mathcal{B}(\epsilon)}\big)$. More precisely, let $M^{\mu}_{\epsilon}=\big(M_{\mathcal{B}(\epsilon)}^{\mu} \ M_{\mathcal{T(\epsilon) \cup N(\epsilon)}}^{\mu}\big):=Q^{\mu} Y^{\mu}_{\epsilon}$. Then we have from~\eqref{invariant_subspace_perturbed} that
\begin{align*}
X^{\mu}(\epsilon) &= M^{\mu}_{\mathcal{B}(\epsilon)} D^{\mu}_{\mathcal{B}(\epsilon)} \big(M^{\mu}_{\mathcal{B}(\epsilon)}\big)^T + M^{\mu}_{\mathcal{T(\epsilon) \cup N(\epsilon)}} D^{\mu}_{\mathcal{T(\epsilon) \cup N(\epsilon)}} \big(M^{\mu}_{\mathcal{T(\epsilon) \cup N(\epsilon)}}\big)^T\\
&=M^{\mu}_{\mathcal{B}(\epsilon)} P_{\epsilon} \Lambda \big(D^{\mu}_{\mathcal{B}(\epsilon)}\big) \big(M^{\mu}_{\mathcal{B}(\epsilon)} P_{\epsilon}\big)^T  + M^{\mu}_{\mathcal{T(\epsilon) \cup N(\epsilon)}} P'_{\epsilon} \Lambda\big(D^{\mu}_{\mathcal{T(\epsilon) \cup N(\epsilon)}}\big) \big(M^{\mu}_{\mathcal{T(\epsilon) \cup N(\epsilon)}} P'_{\epsilon}\big)^T,
\end{align*}
where $P_{\epsilon} \in \mathbb{R}^{n_{\mathcal{B}} \times n_{\mathcal{B}}}$ and $P'_{\epsilon} \in \mathbb{R}^{(n-n_{\mathcal{B}}) \times (n-n_{\mathcal{B}})}$ are orthogonal matrices. All this implies that 
\begin{align*}
\mathcal{R}\big(Q^{\mu}_{\mathcal{B}(\epsilon)}\big)=\mathcal{R}\big(M^{\mu}_{\mathcal{B}(\epsilon)} P_{\epsilon}\big)=\mathcal{R}\big(M^{\mu}_{\mathcal{B}(\epsilon)}\big).
\end{align*}
The distance between $\mathcal{R}\big(Q^{\mu}_{\mathcal{B}}\big)$ and $\mathcal{R}\big(Q^{\mu}_{\mathcal{B}(\epsilon)}\big)$ is the result of Corollary 8.1.11 in~\cite{GL13}. This completes the proof. 
\end{proof}
\begin{remark}\label{convergence_subspaces}
Interestingly, Theorem~\ref{Stewart_eigenspace_theorem} can be modified to quantify the proximity of $\mathcal{R}\big(Q_{\mathcal{B}}^{\mu}\big)$ and $\mathcal{R}\big(Q_{\mathcal{N}}^{\mu}\big)$ to the subspaces $\mathcal{B}$ and $\mathcal{N}$. Let $Q=(Q_{\mathcal{B}},Q_{\mathcal{T}},Q_{\mathcal{N}})$ be an orthonormal basis partitioned according to the optimal partition at $\epsilon=0$, and let $Y^{\mu}$ and $M^{\mu}_{\mathcal{B}}$ be defined as in the proof of Theorem~\ref{Stewart_eigenspace_theorem}, in which $Q^{\mu}$ is replaced by $Q$. Then it is easy to verify that $\distance\!\big(\mathcal{R}\big(Q^{\mu}_{\mathcal{B}}\big), \mathcal{B}\big) \to 0$ for any sequence $\{\mu\} \downarrow 0$.
\end{remark}
\begin{remark}
In the proof of Theorem~\ref{Stewart_eigenspace_theorem}, if $\epsilon$ is fixed and so small that $V^{\mu}_{\epsilon}$ exists for every $0 \le \mu < \tilde{\mu}$, then for any sequence $\{\mu_k\} \downarrow 0$ there exists $M^{\mu_k}_{\mathcal{B}(\epsilon)}$ such that 
\begin{align*}
\mathcal{R}\big(M^{\mu_k}_{\mathcal{B}(\epsilon)}\big)=\mathcal{R}\big(Q^{\mu_k}_{\mathcal{B}(\epsilon)}\big),
\end{align*}
when $k$ is sufficiently large. Therefore, by Remark~\ref{convergence_subspaces} and the triangle inequality, we get
\begin{align*}
\distance\!\big(\mathcal{R}\big(M^{\mu_k}_{\mathcal{B}(\epsilon)}\big), \mathcal{B}(\epsilon)\big) \le \distance\!\big(\mathcal{R}\big(M^{\mu_k}_{\mathcal{B}(\epsilon)}\big), \mathcal{R}\big(Q^{\mu_k}_{\mathcal{B}(\epsilon)}\big)\big) + \distance\!\big(\mathcal{R}\big(Q^{\mu_k}_{\mathcal{B}(\epsilon)}\big), \mathcal{B}(\epsilon)\big) \to 0,
\end{align*}
and thus the columns of an accumulation point of $M^{\mu_k}_{\mathcal{B}(\epsilon)}$ form an orthonormal basis for $\mathcal{B}(\epsilon)$. The case for $\mathcal{N}(\epsilon)$ is analogous.
\end{remark}

\vspace{5px}
\noindent
Notice that~\eqref{estimated_distance_B} and \eqref{estimated_distance_N} reflect the sensitivity of the approximation of the optimal partition in a neighborhood of $0$, when $\epsilon$ belongs to a nonlinearity interval. However, the application of Theorem~\ref{Stewart_eigenspace_theorem} requires an estimate of the effect of the perturbation on the central solutions. Due to the nonsingularity of the Jacobian, an upper bound on $\|\Xi_X^{\mu}(\epsilon)\|$ and $\|\Xi_S^{\mu}(\epsilon)\|$ can be obtained by using the Kantorovich theorem, see e.g., Theorem 5.3.1 in~\cite{DS83}. 
\begin{theorem}[Theorem 5.3.1 in~\cite{DS83}]\label{kantorovich}
Given a solution $x_0 \in \mathbb{R}^n$, let $G:\mathbb{R}^n \to \mathbb{R}^n$ be a continuously differentiable mapping on $\|x-x_0\|_2 \le r$. Assume that $\nabla G(x_0)$ is nonsingular and Lipschitz continuous with Lipschitz constant $\tau$ on $\|x-x_0\|_2 \le r$. Furthermore, define  
\begin{align*}
\theta:=\big \|\nabla G^{-1}(x_0) \big \|_2, \qquad \eta:=\big \|\nabla G^{-1}(x_0)G(x_0) \big \|_2.
\end{align*}
If $\tau\theta \eta \le \frac 12$ and $(1-\sqrt{1-2\tau\theta\eta})/(\theta \tau) \le r$, then there exists a solution $x^*$ to $G(x) = 0$ such that 
\begin{align*}
\|x^*-x_0\|_2 \le \frac{1-\sqrt{1-2\tau\theta\eta}}{\theta \tau}.
\end{align*}
\qed
\end{theorem}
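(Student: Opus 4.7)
The approach is the classical Newton--Kantorovich argument: construct the Newton sequence starting at $x_0$, show by induction that it stays in the ball of radius $r$, and that it is Cauchy with total displacement bounded by the claimed quantity; then pass to the limit using continuity to obtain a zero of $G$.

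Concretely, I would define $x_{k+1} := x_k - \nabla G(x_k)^{-1} G(x_k)$ and carry the induction on two simultaneous estimates: (i) that $\nabla G(x_k)$ is invertible with a controlled norm of its inverse, and (ii) that $\|x_{k+1}-x_k\|_2$ decreases quadratically in $\|x_k - x_{k-1}\|_2$. For (i) I would use the Banach perturbation lemma together with the Lipschitz condition
\[
\big\|\nabla G(x_k) - \nabla G(x_0)\big\|_2 \le \tau \|x_k - x_0\|_2,
\]
so that as long as $\tau\theta\|x_k-x_0\|_2 < 1$, the operator $\nabla G(x_k)$ stays invertible and $\|\nabla G(x_k)^{-1}\|_2$ is bounded in terms of $\theta$ and $\|x_k-x_0\|_2$. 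For (ii) I would combine the Newton identity
\[
G(x_{k+1}) = G(x_{k+1}) - G(x_k) - \nabla G(x_k)(x_{k+1}-x_k)
\]
with the standard integral-remainder inequality
\[
\big\|G(x_{k+1})\big\|_2 \le \frac{\tau}{2}\|x_{k+1}-x_k\|_2^{\,2},
\]
to obtain $\|x_{k+2}-x_{k+1}\|_2 \le \tfrac{\tau}{2}\|\nabla G(x_{k+1})^{-1}\|_2\,\|x_{k+1}-x_k\|_2^{\,2}$.

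The key device is to dominate the Newton iteration by the one-dimensional Newton iteration applied to the scalar quadratic majorant
\[
p(t) := \frac{\tau}{2}\,t^2 - \frac{t}{\theta} + \frac{\eta}{\theta},
\]
starting at $t_0 = 0$. The hypothesis $2\tau\theta\eta \le 1$ is exactly the condition for $p$ to have real roots, the smaller of which is
\[
t^* = \frac{1-\sqrt{1-2\tau\theta\eta}}{\theta\tau},
\]
and this scalar iteration is monotone increasing with $t_k \uparrow t^*$. A termwise comparison, proved by induction using the two estimates above, yields $\|x_{k+1}-x_k\|_2 \le t_{k+1}-t_k$ and hence $\|x_k - x_0\|_2 \le t_k \le t^* \le r$, which keeps all iterates in the domain of validity of the Lipschitz and invertibility hypotheses.

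The main obstacle in the proof is this simultaneous induction: one must verify at every step that $x_{k+1}$ still lies inside the closed ball of radius $r$ (to reuse the Lipschitz and initial-inverse bounds), that $\nabla G(x_{k+1})$ remains invertible, and that the scalar majorization is preserved; the algebra linking the quadratic $p$ to the recursive bound is delicate and is precisely where the condition $(1-\sqrt{1-2\tau\theta\eta})/(\theta\tau) \le r$ enters. Once this is in place, the telescoping bound $\sum_{k\ge 0}\|x_{k+1}-x_k\|_2 \le t^*$ shows $\{x_k\}$ is Cauchy, its limit $x^*$ satisfies $\|x^*-x_0\|_2 \le t^*$, and continuity of $G$ together with $\|G(x_{k+1})\|_2 \le \tfrac{\tau}{2}\|x_{k+1}-x_k\|_2^2 \to 0$ gives $G(x^*) = 0$.
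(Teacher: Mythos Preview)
Your outline is the classical Newton--Kantorovich majorant argument and is correct. Note, however, that the paper does not supply its own proof of this statement: it is quoted verbatim as Theorem~5.3.1 of Dennis--Schnabel and marked with a \qed\ immediately after the statement, so there is nothing in the paper to compare against. The proof you sketch is essentially the one found in that reference.
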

\noindent
Now, we can apply Kantorovich theorem to $F$, as defined in~\eqref{alternative_CP}. To that end, we define
\begin{equation}\label{Kant_attributes}
\begin{aligned}
\delta^{\mu}&:=\min\!\Big \{\lambda_{[n_{\mathcal{B}}+n_{\mathcal{T}}]}(X^{\mu}), \ \lambda_{[n_{\mathcal{N}}]}(S^{\mu}) \Big\},\\
\theta^{\mu}&:=\big\|J^{-1} \big(X^{\mu},y^{\mu},S^{\mu}\big)\big \|_2,\\
\eta^{\mu}&:=\big \|J^{-1}\big(X^{\mu},y^{\mu},S^{\mu}\big) F\big(X^{\mu},y^{\mu},S^{\mu},\mu,\epsilon\big) \big\|_2.
\end{aligned}
\end{equation}
\begin{lemma}\label{bound_on_Delta}
Let $\big(X^{\mu},y^{\mu},S^{\mu}\big)$ be a central solution. If $\epsilon$ is chosen in such a way that
\begin{align}
|\epsilon| < \min\!\Bigg \{\frac{\delta^{\mu}}{2\theta^{\mu}\big\|\bar{C}\big\|}, \  \frac{1}{2(\theta^{\mu})^2\big\|\bar{C}\big\|} \Bigg \},  \label{Kant_bound}
\end{align}
then there exists a central solution $\big(X^{\mu}(\epsilon),y^{\mu}(\epsilon),S^{\mu}(\epsilon)\big)$ such that 
\begin{align}
\big \|\Xi_X^{\mu}(\epsilon) \big \| &\le \frac{1-\sqrt{1-2|\epsilon| (\theta^{\mu})^2 \big\|\bar{C}\big\|}}{\theta^{\mu}}, \label{upper_bound_X}\\[-1\jot]
\big \|\Xi_S^{\mu}(\epsilon) \big \| &\le  \frac{1-\sqrt{1-2|\epsilon| (\theta^{\mu})^2 \big\|\bar{C}\big\|}}{\theta^{\mu}} \nonumber.
\end{align}
\end{lemma}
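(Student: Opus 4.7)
The plan is a direct application of the Kantorovich theorem (Theorem~\ref{kantorovich}) to the map $G(X,y,S) := F(X,y,S,\mu,\epsilon)$, viewed with $\mu$ and $\epsilon$ held fixed, starting from the unperturbed central solution $(X^{\mu},y^{\mu},S^{\mu})$. First I would evaluate $G$ at the starting point: since $(X^{\mu},y^{\mu},S^{\mu})$ satisfies~\eqref{alternative_CP} at $\epsilon=0$, only the dual feasibility block of $G(X^{\mu},y^{\mu},S^{\mu})$ is nonzero, and one reads off $G(X^{\mu},y^{\mu},S^{\mu}) = \bigl(0;\,-\epsilon\,\svectorize(\bar{C});\,0\bigr)$, whose $\ell_2$ norm equals $|\epsilon|\,\|\bar{C}\|$. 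Combining this with the definition of $\theta^{\mu}$ in~\eqref{Kant_attributes} yields the quantitative bound $\eta^{\mu} \leq \theta^{\mu}|\epsilon|\,\|\bar{C}\|$.

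Next I would estimate the Lipschitz constant $\tau$ of the Jacobian $\nabla G = J$. Inspection of $J(X,y,S)$ shows that only its bottom block row depends on $(X,y,S)$, and this dependence is affine via the symmetric Kronecker products $S \otimes_s I_n$ and $X \otimes_s I_n$. The standard spectral identity $\|K \otimes_s I_n\|_2 \leq \|K\|$ for symmetric $K$ then allows one to take $\tau \leq 1$ under the Frobenius norm on the input tuple. With this choice, the Kantorovich inequality $\tau\theta^{\mu}\eta^{\mu} \leq \tfrac12$ is implied by $(\theta^{\mu})^2|\epsilon|\,\|\bar{C}\| \leq \tfrac12$, which is exactly the second inequality in~\eqref{Kant_bound}.

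To verify the radius hypothesis $(1 - \sqrt{1-2\tau\theta^{\mu}\eta^{\mu}})/(\theta^{\mu}\tau) \leq r$, I would set $r = \delta^{\mu}$: the elementary estimate $1 - \sqrt{1-t} \leq t$ combined with the first inequality in~\eqref{Kant_bound} gives $(1-\sqrt{1-2\theta^{\mu}\eta^{\mu}})/\theta^{\mu} \leq 2\theta^{\mu}|\epsilon|\,\|\bar{C}\| < \delta^{\mu}$. The Kantorovich conclusion then produces a root $(X^{\mu}(\epsilon),y^{\mu}(\epsilon),S^{\mu}(\epsilon))$ of $G$ satisfying the inequality
\begin{align*}
\bigl\|(X^{\mu}(\epsilon),y^{\mu}(\epsilon),S^{\mu}(\epsilon)) - (X^{\mu},y^{\mu},S^{\mu})\bigr\|_2 \leq \frac{1-\sqrt{1-2|\epsilon|(\theta^{\mu})^2\|\bar{C}\|}}{\theta^{\mu}}.
\end{align*}
Since the $\ell_2$ norm on the concatenated tuple dominates the Frobenius norm of each block, the same bound transfers to $\|\Xi_X^{\mu}(\epsilon)\|$ and $\|\Xi_S^{\mu}(\epsilon)\|$ separately, yielding the claimed inequalities.

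The main obstacle will be the careful bookkeeping of the Lipschitz constant $\tau$ under the specific norm conventions the paper mixes (Frobenius on matrices, induced $2$-norm on operators), and justifying that the root produced by Kantorovich is genuinely a \emph{central} solution with $X^{\mu}(\epsilon), S^{\mu}(\epsilon) \succ 0$ rather than merely a root of the polynomial system. The role of the $\delta^{\mu}$-bound in~\eqref{Kant_bound} is precisely to confine the perturbation to a ball where the eigenvalue separations established in Lemma~\ref{bounds_on_muCenters} persist, so that the iterate continues to identify a bona fide central solution on the perturbed central path.
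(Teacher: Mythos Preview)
Your proposal is correct and follows essentially the same route as the paper: apply Kantorovich to $F(\cdot,\cdot,\cdot,\mu,\epsilon)$ at the base point $(X^{\mu},y^{\mu},S^{\mu})$, use the global Lipschitz constant $\tau=1$ for $J$ (the paper cites Lemma~2 of~\cite{NO99} for this, whereas you sketch it directly), read off the residual $(0;-\epsilon\,\svectorize(\bar C);0)$, and verify $\tau\theta^{\mu}\eta^{\mu}\le\tfrac12$ via the second term in~\eqref{Kant_bound}. One small correction: the role of $\delta^{\mu}$ is not to preserve the eigenvalue \emph{separations} of Lemma~\ref{bounds_on_muCenters}, but simply to keep the Kantorovich root positive definite---the paper uses Weyl-type perturbation bounds $|\lambda_{[i]}(\hat X)-\lambda_{[i]}(X^{\mu})|\le\|\hat X-X^{\mu}\|$ together with the first term in~\eqref{Kant_bound} to ensure $\lambda_{[n_{\mathcal B}+n_{\mathcal T}]}(\hat X)>0$ and $\lambda_{[n_{\mathcal N}]}(\hat S)>0$, then invokes the symmetrized equation $XS+SX=2\mu I$ to conclude positivity of the remaining eigenvalues.
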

\begin{proof}
Note that $F$ is continuously differentiable, and $J$ is Lipschitz continuous with global Lipschitz constant 1, see Lemma 2 in~\cite{NO99}. Furthermore, we have
\begin{align*}
\eta^{\mu} \le \big \|J^{-1}\big(X^{\mu},y^{\mu},S^{\mu}\big) \big \|_2 \big \|F\big(X^{\mu},y^{\mu},S^{\mu},\mu,\epsilon\big) \big\|_2 = |\epsilon| \theta^{\mu} \big \|\bar{C} \big \|,
\end{align*}
where the last equality follows from
\begin{align*}
F\big(X^{\mu},y^{\mu},S^{\mu},\mu,\epsilon\big)=\begin{pmatrix} 0 \\ -\epsilon\svectorize(\bar{C}) \\ 0 \end{pmatrix}.
\end{align*}
Thus, by the condition of Kantorovich theorem, if 
\begin{align*}
\eta^{\mu}\theta^{\mu} \le |\epsilon| (\theta^{\mu})^2 \big\|\bar{C}\big\| \le \frac12,
\end{align*}
then there exists an $\big(\hat{X}^{\mu}(\epsilon),\hat{y}^{\mu}(\epsilon),\hat{S}^{\mu}(\epsilon)\big)$ satisfying the equations in~\eqref{alternative_CP}, such that 
\begin{align*}
\big \|\big(\svectorize(\hat{X}^{\mu}(\epsilon)-X^{\mu}); \ \hat{y}^{\mu}(\epsilon)-y^{\mu}; \ \svectorize(\hat{S}^{\mu}(\epsilon)-S^{\mu})\big) \big \|_2 \le \frac{1-\sqrt{1-2|\epsilon| (\theta^{\mu})^2 \big\|\bar{C}\big\|}}{\theta^{\mu}}.
\end{align*}
\noindent
In particular, this implies that for $i=1,\ldots,n_{\mathcal{B}}+n_{\mathcal{T}}$ 
\begin{align}\label{condition_positivity_X}
\big |\lambda_{[i]}\big(\hat{X}^{\mu}(\epsilon)\big)-\lambda_{[i]}\big(X^{\mu}\big) \big | \le \big \|\hat{X}^{\mu}(\epsilon)-X^{\mu} \big \| \le \frac{1-\sqrt{1-2|\epsilon| (\theta^{\mu})^2 \big\|\bar{C}\big\|}}{\theta^{\mu}},
\end{align}
and that for $j=1,\ldots,n_{\mathcal{N}}$
\begin{align}\label{condition_positivity_S}
\big |\lambda_{[j]}\big(\hat{S}^{\mu}(\epsilon)\big)-\lambda_{[j]}\big(S^{\mu}\big) \big | \le \big \|\hat{S}^{\mu}(\epsilon)-S^{\mu} \big \| \le \frac{1-\sqrt{1-2|\epsilon| (\theta^{\mu})^2 \big\|\bar{C}\big\|}}{\theta^{\mu}}.
\end{align}
On the other hand, $\hat{X}^{\mu}(\epsilon)$ and $\hat{S}^{\mu}(\epsilon)$ stay positive definite if
\begin{align*}
\big |\lambda_{[i]}\big(\hat{X}^{\mu}(\epsilon)\big)-\lambda_{[i]}\big(X^{\mu}\big) \big | &< \delta^{\mu}, & i&=1,\ldots,n_{\mathcal{B}}+n_{\mathcal{T}},\\
\big |\lambda_{[j]}\big(\hat{S}^{\mu}(\epsilon)\big)-\lambda_{[j]}\big(S^{\mu}\big) \big | &< \delta^{\mu}, & j&=1,\ldots,n_{\mathcal{N}},
\end{align*}
which together with~\eqref{condition_positivity_X} and~\eqref{condition_positivity_S} induce the following bound:  
\begin{align}
\delta^{\mu} > \frac{1-\big(1-2|\epsilon|(\theta^{\mu})^2 \big\|\bar{C}\big\|\big)}{\theta^{\mu}} \ge  \frac{1-\sqrt{1-2|\epsilon| (\theta^{\mu})^2 \big\|\bar{C}\big\|}}{\theta^{\mu}}, \label{positive_condition}
\end{align}
where the second inequality in~\eqref{positive_condition} follows from $2|\epsilon| (\theta^{\mu})^2 \big\|\bar{C}\big\| \le 1$. Note that if~\eqref{positive_condition} holds, then $\lambda_{[i]}\big(\hat{X}^{\mu}(\epsilon)\big) > 0$ for $i=n-n_{\mathcal{N}} + 1,\ldots,n$ and $\lambda_{[j]}\big(\hat{S}^{\mu}(\epsilon)\big) > 0$ for $j=n_{\mathcal{N}} + 1,\ldots,n$ are immediate from~\eqref{central_path_equations}. Consequently, if~\eqref{Kant_bound} holds, then solution $\big(\hat{X}^{\mu}(\epsilon),\hat{y}^{\mu}(\epsilon),\hat{S}^{\mu}(\epsilon)\big)$ satisfies~\eqref{alternative_CP}, and it is indeed a central solution for the perturbed SDO problem. The proof is complete.
\end{proof}

\vspace{5px}
\noindent
Using the results of Lemma~\ref{bound_on_Delta}, we can now derive upper bounds on the distance of $\mathcal{R}\big(Q^{\mu}_{\mathcal{B}(\epsilon)}\big)$ and $\mathcal{R}\big(Q^{\mu}_{\mathcal{N}(\epsilon)}\big)$ from $\mathcal{R}\big(Q^{\mu}_{\mathcal{B}}\big)$ and $\mathcal{R}\big(Q^{\mu}_{\mathcal{N}}\big)$, respectively.
\begin{theorem}\label{existence_Q_central_solutions}
Let a central solution $\big(X^{\mu},y^{\mu},S^{\mu}\big)$ be given, and let $\epsilon$ belong to a nonlinearity interval in a neighborhood of $0$. If $\mu < \min\{\tilde{\mu},\tilde{\mu}(\epsilon)\}$ and
\begin{align*}
|\epsilon| < \frac{1}{\theta^{\mu}\big\|\bar{C}\big\|}\min\!\Bigg \{\frac{\delta^{\mu}}{2}, \  \frac{1}{2\theta^{\mu}}, \ \frac{\lambda_{[n_{\mathcal{B}}]}(X^{\mu})-\lambda_{[n_{\mathcal{B}}+1]}(X^{\mu})}{10}, \ \frac{\lambda_{[n_{\mathcal{N}}]}(S^{\mu})-\lambda_{[n_{\mathcal{N}}+1]}(S^{\mu})}{10} \Bigg \}
\end{align*}
hold, then there exists $\big(X^{\mu}(\epsilon),y^{\mu}(\epsilon),S^{\mu}(\epsilon)\big)$ such that  
\begin{align}
\distance\!\big(\mathcal{R}\big(Q^{\mu}_{\mathcal{B}}\big),\mathcal{R}\big(Q^{\mu}_{\mathcal{B}(\epsilon)}\big)\big) \le \frac{4\Big(1-\sqrt{1-2|\epsilon|(\theta^{\mu})^2 \big\|\bar{C}\big\|}\Big)}{\theta^{\mu} \big(\lambda_{[n_{\mathcal{B}}]}(X^{\mu})-\lambda_{[n_{\mathcal{B}}+1]}(X^{\mu})\big)}, \label{distance_between_subspaces_B}\\
\distance\!\big(\mathcal{R}\big(Q^{\mu}_{\mathcal{N}}\big),\mathcal{R}\big(Q^{\mu}_{\mathcal{N}(\epsilon)}\big)\big) \le \frac{4\Big(1-\sqrt{1-2|\epsilon|(\theta^{\mu})^2 \big\|\bar{C}\big\|}\Big)}{\theta^{\mu}\big(\lambda_{[n_{\mathcal{N}}]}(S^{\mu})-\lambda_{[n_{\mathcal{N}}+1]}(S^{\mu})\big)}. \label{distance_between_subspaces_N}
\end{align}
\end{theorem}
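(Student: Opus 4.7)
The plan is to obtain the statement as a direct composition of Lemma~\ref{bound_on_Delta} (which controls the perturbation of the central solution) with Theorem~\ref{Stewart_eigenspace_theorem} (which translates a small perturbation of the matrix into proximity of the invariant subspaces). The first two entries inside the $\min$ on $|\epsilon|$ are exactly those needed to invoke Lemma~\ref{bound_on_Delta}, while the last two are tailored so that the resulting bound on $\Xi_X^{\mu}(\epsilon)$ and $\Xi_S^{\mu}(\epsilon)$ fits into the hypotheses of Theorem~\ref{Stewart_eigenspace_theorem}.

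First I would observe that the bound
\begin{align*}
|\epsilon| < \min\!\bigg\{\frac{\delta^{\mu}}{2\theta^{\mu}\|\bar{C}\|}, \ \frac{1}{2(\theta^{\mu})^2\|\bar{C}\|}\bigg\}
\end{align*}
is implied by the hypothesis on $|\epsilon|$, so Lemma~\ref{bound_on_Delta} applies and yields a central solution $\big(X^{\mu}(\epsilon),y^{\mu}(\epsilon),S^{\mu}(\epsilon)\big)$ satisfying
\begin{align*}
\max\!\big\{\|\Xi_X^{\mu}(\epsilon)\|,\ \|\Xi_S^{\mu}(\epsilon)\|\big\} \le \frac{1-\sqrt{1-2|\epsilon|(\theta^{\mu})^2\|\bar{C}\|}}{\theta^{\mu}}.
\end{align*}
The elementary inequality $1-\sqrt{1-t}\le t$ on $[0,1]$ (which follows from $\sqrt{1-t}\ge 1-t$) then gives the looser but more tractable estimate
\begin{align*}
\max\!\big\{\|\Xi_X^{\mu}(\epsilon)\|,\ \|\Xi_S^{\mu}(\epsilon)\|\big\} \le 2|\epsilon|\theta^{\mu}\|\bar{C}\|.
\end{align*}
Next I would use the last two terms in the $\min$, which enforce
\begin{align*}
2|\epsilon|\theta^{\mu}\|\bar{C}\| < \tfrac{1}{5}\big(\lambda_{[n_{\mathcal{B}}]}(X^{\mu})-\lambda_{[n_{\mathcal{B}}+1]}(X^{\mu})\big),
\end{align*}
and the analogous bound for $S^{\mu}$. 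Combined with the previous display, this is exactly condition~\eqref{condition_on_perturbation_X} (and its $S$-analogue) needed in Theorem~\ref{Stewart_eigenspace_theorem}. Since $\mu<\min\{\tilde{\mu},\tilde{\mu}(\epsilon)\}$ holds by hypothesis, all conditions of Theorem~\ref{Stewart_eigenspace_theorem} are met.

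To finish, I would apply Theorem~\ref{Stewart_eigenspace_theorem} and bound the oblique blocks appearing on the right-hand sides of~\eqref{estimated_distance_B} and~\eqref{estimated_distance_N} by the full Frobenius norms: because $Q^{\mu}=(Q^{\mu}_{\mathcal{B}},Q^{\mu}_{\mathcal{T}\cup\mathcal{N}})$ is orthogonal, the identity $\|(Q^{\mu})^{T}\Xi_X^{\mu}(\epsilon)Q^{\mu}\|=\|\Xi_X^{\mu}(\epsilon)\|$ implies $\|(Q^{\mu}_{\mathcal{B}})^{T}\Xi_X^{\mu}(\epsilon)Q^{\mu}_{\mathcal{T}\cup\mathcal{N}}\|\le \|\Xi_X^{\mu}(\epsilon)\|$, and similarly for $S$. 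Substituting the Kantorovich estimate from Lemma~\ref{bound_on_Delta} yields~\eqref{distance_between_subspaces_B} and~\eqref{distance_between_subspaces_N} directly.

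The only delicate point, which I would expect to be the main obstacle if one wrote the argument without the two-stage decomposition, is verifying the second inequality in~\eqref{condition_on_perturbation_X} for the Kantorovich-style bound in its original (square-root) form; the chain $1-\sqrt{1-t}\le t$ makes this transparent and also explains why the factor $10$ (rather than $5$) appears in the last two terms of the $\min$ on $|\epsilon|$. Everything else is bookkeeping and does not require any new ideas beyond the two cited results.
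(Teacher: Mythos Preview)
Your proposal is correct and follows essentially the same route as the paper: invoke Lemma~\ref{bound_on_Delta} via the first two entries of the $\min$, use the elementary inequality $1-\sqrt{1-t}\le t$ to convert the Kantorovich bound into $2|\epsilon|\theta^{\mu}\|\bar{C}\|$, verify condition~\eqref{condition_on_perturbation_X} from the last two entries, and then read off~\eqref{distance_between_subspaces_B}--\eqref{distance_between_subspaces_N} from Theorem~\ref{Stewart_eigenspace_theorem}. Your explicit remark that the off-diagonal block norm is dominated by the full Frobenius norm is a detail the paper leaves implicit, but otherwise the arguments coincide.
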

\begin{proof}
Condition~\eqref{condition_on_perturbation_X}, after including~\eqref{upper_bound_X}, holds if 
\begin{align*}
\frac{1-\sqrt{1-2|\epsilon|(\theta^{\mu})^2 \big\|\bar{C}\big\|}}{\theta^{\mu}} \le \frac{1-(1-2|\epsilon|(\theta^{\mu})^2 \big\|\bar{C}\big\|)}{\theta^{\mu}} \le \frac{\lambda_{[n_{\mathcal{B}}]}(X^{\mu})-\lambda_{[n_{\mathcal{B}}+1]}(X^{\mu})}{5},  
\end{align*}
which gives the upper bound
\begin{align*}
 |\epsilon| \le \frac{\lambda_{[n_{\mathcal{B}}]}(X^{\mu})-\lambda_{[n_{\mathcal{B}}+1]}(X^{\mu})}{10\theta^{\mu}\big\|\bar{C}\big\|}.
\end{align*}
The upper bounds on the distance between the subspaces are immediate from~\eqref{estimated_distance_B} and~\eqref{estimated_distance_N}.
\end{proof}

\section{Numerical experiments}\label{experiments}
In this section, we investigate the sensitivity of the central path and the optimal partition on Example~\ref{motivation_nonlinearity}. Recall that $(-\infty,-\frac12)$ and $(\frac32,\infty)$ are invariancy intervals, $(-\frac12,\frac32)$ is a nonlinearity interval, and $-\frac12$ as well as $\frac32$ are the transition points. The strict complementarity condition fails only at $-\frac12$ and $\frac32$, the dual nondegeneracy condition fails only at $\frac12$, and the eigenvalues of unique primal optimal solutions at $0$ and $1$ are of multiplicity 2.

 \begin{figure}[H]
\begin{center}
\includegraphics[height=2.0in]{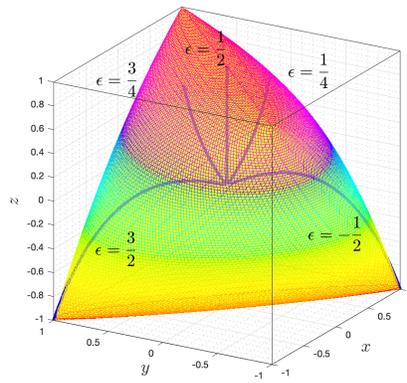}
\caption{The central path for different values of $\epsilon$.}
\label{fig:3elliptope_central_path}
\end{center}
\end{figure}

\vspace{5px}
\noindent
Tables~\ref{threshold} through~\ref{sensitivity_simple} and Figure~\ref{fig:3elliptope_central_path}  represent a summary of numerical experiments, where $\big(X^a(\epsilon),y^a(\epsilon),S^a(\epsilon)\big)$ is the analytic center of the optimal set and $\breve{\mu}(\epsilon)$ denotes a lower bound on the largest $\mu$ which allows for the identification of $\big(Q_{\mathcal{B}(\epsilon)}^{\mu},Q_{\mathcal{T}(\epsilon)}^{\mu},Q_{\mathcal{N}(\epsilon)}^{\mu}\big)$. To numerically obtain $\breve{\mu}(\epsilon)$, initially set to $1$, $\mu$ is sequentially decreased at a geometric rate $0.9$ until the eigenvalues with positive limit points of $X^{\mu}(\epsilon)$ and $S^{\mu}(\epsilon)$ can be correctly identified, up to a certain precision. In our experiments, the limit point of an eigenvalue of $X^{\mu}(\epsilon)$ or $S^{\mu}(\epsilon)$ is taken as $0$ if the eigenvalue drops below $10^{-5}$.

\begin{table}[H]
\small
\caption{The numerical behavior of the central solutions at different values of $\epsilon$.}
\label{threshold}
\resizebox{\columnwidth}{!}{
\begin{tabular}{cccccc}
\hline
$\epsilon$ & $\breve{\mu}(\epsilon)$       & $\distance\!\Big(\mathcal{B}(\epsilon),\mathcal{R}\big(Q_{\mathcal{B}(\epsilon)}^{\breve{\mu}(\epsilon)}\big)\Big)$ & $\distance\!\Big(\mathcal{N}(\epsilon),\mathcal{R}\big(Q_{\mathcal{N}(\epsilon)}^{\breve{\mu}(\epsilon)}\big)\Big)$ & $\big\|X^{\breve{\mu}(\epsilon)}(\epsilon)-X^{a}(\epsilon)\big\|$  & $\big\|S^{\breve{\mu}(\epsilon)}(\epsilon)-S^{a}(\epsilon)\big\|$  \\
\hline
-1   & 9.953E-06 & 1.043E-06 & 1.043E-06 & 1.556E-05 & 1.724E-05 \\
-0.75   & 4.975E-06 & 1.094E-06 & 1.094E-06 & 1.528E-05 & 1.450E-05 \\
-0.50   & 4.951E-11 & 1.175E-06 & 1.175E-06 & 1.495E-05 & 1.221E-05 \\
-0.25   & 8.734E-06 & 1.465E-06 & 1.465E-06 & 1.226E-05 & 1.433E-05 \\
0    & 1.488E-05 & 3.485E-16 & 4.328E-16 & 6.074E-06 & 1.718E-05 \\
0.25    & 1.123E-05 & 6.273E-07 & 6.273E-07 & 7.042E-06 & 1.347E-05 \\
0.50    & 9.953E-06 & 0.000E+00 & 0.000E+00 & 7.037E-06 & 1.219E-05 \\
0.75    & 1.123E-05 & 6.273E-07 & 6.273E-07 & 7.042E-06 & 1.347E-05 \\
1    & 1.488E-05 & 3.485E-16 & 4.328E-16 & 6.074E-06 & 1.718E-05 \\
1.25    & 8.734E-06 & 1.465E-06 & 1.465E-06 & 1.226E-05 & 1.433E-05 \\
1.50    & 4.951E-11 & 1.175E-06 & 1.175E-06 & 1.495E-05 & 1.221E-05 \\
1.75    & 4.975E-06 & 1.094E-06 & 1.094E-06 & 1.528E-05 & 1.450E-05 \\
2    & 9.953E-06 & 1.043E-06 & 1.043E-06 & 1.556E-05 & 1.724E-05\\
\hline
\end{tabular}}
\end{table}

\vspace{5px}
\noindent
In Table~\ref{threshold}, we show how small $\mu$ should approximately be in order to identify $\big(Q_{\mathcal{B}(\epsilon)}^{\mu},Q_{\mathcal{T}(\epsilon)}^{\mu},Q_{\mathcal{N}(\epsilon)}^{\mu}\big)$. We also highlight the proximity of the central solutions and the approximation of the optimal partition once $\big(Q_{\mathcal{B}(\epsilon)}^{\mu},Q_{\mathcal{T}(\epsilon)}^{\mu},Q_{\mathcal{N}(\epsilon)}^{\mu}\big)$ is identified. One can observe that $\breve{\mu}(\epsilon)$ gets comparatively smaller values at $-\frac12$ and $\frac32$, where the strict complementarity condition fails. Further, $\mathcal{R}\big(Q_{\mathcal{B}(\epsilon)}^{\breve{\mu}(\epsilon)}\big)$ and $\mathcal{R}\big(Q_{\mathcal{N}(\epsilon)}^{\breve{\mu}(\epsilon)}\big)$ are in close proximity to the true optimal partition at $0$, $1$, and $\frac12$, in spite of multiplicity of the eigenvalues or failure of the dual nondegeneracy condition.

\vspace{5px}
\noindent
At fixed $\epsilon=-\frac12$ and $\epsilon=0$, Tables~\ref{Decrease_SC_fails} and~\ref{Decrease_SC_holds} demonstrate the convergence of $\mathcal{R}\big(Q_{\mathcal{B}(\epsilon)}^{\mu}\big)$, $\mathcal{R}\big(Q_{\mathcal{N}(\epsilon)}^{\mu}\big)$, and $\big(X^{\mu}(\epsilon),y^{\mu}(\epsilon),S^{\mu}(\epsilon)\big)$ to $\mathcal{B}(\epsilon)$, $\mathcal{N}(\epsilon)$, and the analytic center of the optimal set, respectively. At $\epsilon = -\frac12$, $\distance\!\big(\mathcal{B}(\epsilon),\mathcal{R}\big(Q_{\mathcal{B}(\epsilon)}^{\mu}\big)\big)$ and $\|X^{\mu}(\epsilon)-X^{a}(\epsilon)\|$ converge at almost the same rate, and they are of approximate order $\mathcal{O}(\sqrt{\mu})$. Analogous results can be observed for $\distance\!\big(\mathcal{N}(\epsilon),\mathcal{R}\big(Q_{\mathcal{N}(\epsilon)}^{\mu}\big)\big)$ and $\|S^{\mu}(\epsilon)-S^{a}(\epsilon)\|$.  At $\epsilon = 0$, on the other hand, $\distance\!\big(\mathcal{B}(\epsilon),\mathcal{R}\big(Q_{\mathcal{B}(\epsilon)}^{\mu}\big)\big)$ and $\distance\!\big(\mathcal{N}(\epsilon),\mathcal{R}\big(Q_{\mathcal{N}(\epsilon)}^{\mu}\big)\big)$ converge faster at the beginning, but they become very slow ultimately. In this case, as expected from~\cite{LS98}, $X^{\mu}(\epsilon)$ and $S^{\mu}(\epsilon)$ stay in $\mathcal{O}(\mu)$ proximity of the analytic center of the optimal set.

\begin{table}[H]
\small
\centering
\caption{Convergence to the analytic center and the true optimal partition at $\epsilon = -\frac12$.}
\label{Decrease_SC_fails}
\resizebox{\columnwidth}{!}{
\begin{tabular}{ccccc}
\hline
$\mu$       & $\distance\!\big(\mathcal{B}(\epsilon),\mathcal{R}\big(Q_{\mathcal{B}(\epsilon)}^{\mu}\big)\big)$ & $\distance\!\big(\mathcal{N}(\epsilon),\mathcal{R}\big(Q_{\mathcal{N}(\epsilon)}^{\mu}\big)\big)$ & $\|X^{\mu}(\epsilon)-X^{a}(\epsilon)\|$  & $\|S^{\mu}(\epsilon)-S^{a}(\epsilon)\|$  \\
\hline
1.E-11 & 5.280E-07 & 5.280E-07 & 6.720E-06 & 5.487E-06 \\
1.E-12 & 1.670E-07 & 1.670E-07 & 2.125E-06 & 1.735E-06 \\
1.E-13 & 5.282E-08 & 5.282E-08 & 6.723E-07 & 5.490E-07 \\
1.E-14 & 1.679E-08 & 1.679E-08 & 2.137E-07 & 1.745E-07 \\
1.E-15 & 5.038E-09 & 5.038E-09 & 6.413E-08 & 5.236E-08 \\
1.E-16 & 6.249E-09 & 6.249E-09 & 7.954E-08 & 6.494E-08\\
\hline
\end{tabular}}
\end{table}

\begin{table}[H]
\small
\centering
\caption{Convergence to the analytic center and the true optimal partition at $\epsilon = 0$.}
\label{Decrease_SC_holds}
\resizebox{\columnwidth}{!}{
\begin{tabular}{ccccc}
\hline
$\mu$  & $\distance\!\big(\mathcal{B}(\epsilon),\mathcal{R}\big(Q_{\mathcal{B}(\epsilon)}^{\mu}\big)\big)$   & $\distance\!\big(\mathcal{N}(\epsilon),\mathcal{R}\big(Q_{\mathcal{N}(\epsilon)}^{\mu}\big)\big)$  & $\|X^{\mu}(\epsilon)-X^{a}(\epsilon)\|$    & $\|S^{\mu}(\epsilon)-S^{a}(\epsilon)\|$    \\
\hline
1.E-05 & 5.327E-16 & 4.328E-16 & 4.082E-06 & 1.155E-05 \\
1.E-06 & 4.937E-16 & 4.328E-16 & 4.082E-07 & 1.155E-06 \\
1.E-07 & 2.878E-16 & 4.328E-16 & 4.082E-08 & 1.155E-07 \\
1.E-08 & 4.600E-16 & 4.328E-16 & 4.082E-09 & 1.155E-08 \\
1.E-09 & 5.849E-16 & 4.328E-16 & 4.082E-10 & 1.155E-09 \\
1.E-10 & 5.087E-16 & 4.328E-16 & 4.082E-11 & 1.155E-10 \\
1.E-11 & 5.660E-16 & 4.328E-16 & 4.082E-12 & 1.155E-11 \\
1.E-12 & 9.407E-16 & 4.328E-16 & 4.083E-13 & 1.155E-12 \\
1.E-13 & 5.373E-16 & 4.328E-16 & 4.084E-14 & 1.155E-13 \\
1.E-14 & 7.640E-16 & 4.328E-16 & 4.081E-15 & 1.154E-14 \\
1.E-15 & 4.686E-16 & 6.958E-16 & 4.578E-16 & 1.154E-15 \\
1.E-16 & 2.373E-16 & 4.328E-16 & 1.110E-16 & 3.140E-16\\
\hline
\end{tabular}}
\end{table}

\vspace{5px}
\noindent
In Tables~\ref{sensitivity_multiplicity} and~\ref{sensitivity_simple}, we investigate the sensitivity of the approximation of the optimal partition around $\epsilon=0$ and $\epsilon=\frac12$ at a small enough fixed $\mu$ which allows for the identification of $\big(Q_{\mathcal{B}(\epsilon')}^{\mu},Q_{\mathcal{T}(\epsilon')}^{\mu},Q_{\mathcal{N}(\epsilon')}^{\mu}\big)$. We can observe from the numerical results that the actual values of the distances between the subspaces closely imitate the upper bounds~\eqref{estimated_distance_B} and~\eqref{estimated_distance_N}. The graphs of $\distance\!\big(\mathcal{R}\big(Q_{\mathcal{B}(\epsilon)}^{\mu}\big),\mathcal{R}\big(Q_{\mathcal{B}(\epsilon')}^{\mu}\big)\big)$ and $\distance\!\big(\mathcal{R}\big(Q_{\mathcal{N}(\epsilon)}^{\mu}\big),\mathcal{R}\big(Q_{\mathcal{N}(\epsilon')}^{\mu}\big)\big)$ versus $\epsilon'$ have an almost symmetric shape, and they reflect a nonsmooth behavior at $0$ and $\frac12$. Furthermore, $\distance\!\big(\mathcal{R}\big(Q_{\mathcal{B}(\frac12)}^{\mu}\big),\mathcal{R}\big(Q_{\mathcal{B}(\epsilon')}^{\mu}\big)\big)$ and $\distance\!\big(\mathcal{R}\big(Q_{\mathcal{N}(\frac12)}^{\mu}\big),\mathcal{R}\big(Q_{\mathcal{N}(\epsilon')}^{\mu}\big)\big)$ vary almost linearly w.r.t. $\epsilon'$.

\begin{table}[H]
\caption{The sensitivity of the approximation of the optimal partition at $\epsilon = 0$.}
\label{sensitivity_multiplicity}
\resizebox{\columnwidth}{!}{
\centering
\begin{tabular}{cccccc}
\hline
$\epsilon'$ & $\mu$       & $\distance\!\big(\mathcal{R}\big(Q_{\mathcal{B}(\epsilon)}^{\mu}\big),\mathcal{R}\big(Q_{\mathcal{B}(\epsilon')}^{\mu}\big)\big)$ & $\distance\!\big(\mathcal{R}\big(Q_{\mathcal{N}(\epsilon)}^{\mu}\big),\mathcal{R}\big(Q_{\mathcal{N}(\epsilon')}^{\mu}\big)\big)$ & The upper bound~\eqref{estimated_distance_B}     & The upper bound~\eqref{estimated_distance_N}     \\
\hline
-0.005 & 1.41E-05 & 4.698E-03 & 4.698E-03 & 1.892E-02 & 1.892E-02 \\
-0.004 & 1.41E-05 & 3.761E-03 & 3.761E-03 & 1.513E-02 & 1.513E-02 \\
-0.003 & 1.41E-05 & 2.823E-03 & 2.823E-03 & 1.134E-02 & 1.134E-02 \\
-0.002 & 1.41E-05 & 1.883E-03 & 1.883E-03 & 7.552E-03 & 7.552E-03 \\
-0.001 & 1.41E-05 & 9.422E-04 & 9.422E-04 & 3.774E-03 & 3.774E-03 \\
0      & 1.41E-05 & 0 & 0 & 0 & 0 \\
0.001  & 1.41E-05 & 9.434E-04 & 9.434E-04 & 3.769E-03 & 3.769E-03 \\
0.002  & 1.41E-05 & 1.888E-03 & 1.888E-03 & 7.532E-03 & 7.532E-03 \\
0.003  & 1.41E-05 & 2.834E-03 & 2.834E-03 & 1.129E-02 & 1.129E-02 \\
0.004  & 1.41E-05 & 3.781E-03 & 3.781E-03 & 1.504E-02 & 1.504E-02 \\
0.005  & 1.41E-05 & 4.730E-03 & 4.730E-03 & 1.879E-02 & 1.879E-02 \\
\hline
\end{tabular}}
\end{table}

\begin{table}[H]
\caption{The sensitivity of the approximation of the optimal partition at $\epsilon = \frac12$.}
\label{sensitivity_simple}
\resizebox{\columnwidth}{!}{
\centering
\begin{tabular}{cccccc}
\hline
$\epsilon'$ & $\mu$       & $\distance\!\big(\mathcal{R}\big(Q_{\mathcal{B}(\epsilon)}^{\mu}\big),\mathcal{R}\big(Q_{\mathcal{B}(\epsilon')}^{\mu})\big)$ & $\distance\!\big(\mathcal{R}\big(Q_{\mathcal{N}(\epsilon)}^{\mu}\big),\mathcal{R}\big(Q_{\mathcal{N}(\epsilon')}^{\mu}\big)\big)$ & The upper bound~\eqref{estimated_distance_B}      & The upper bound~\eqref{estimated_distance_N}     \\
\hline
0.495   & 9.26E-06 & 7.071E-03       & 7.071E-03       & 2.828E-02    & 2.828E-02    \\
0.496   & 9.26E-06 & 5.657E-03       & 5.657E-03       & 2.263E-02    & 2.263E-02    \\
0.497   & 9.26E-06 & 4.243E-03       & 4.243E-03       & 1.697E-02    & 1.697E-02    \\
0.498   & 9.26E-06 & 2.828E-03       & 2.828E-03       & 1.131E-02    & 1.131E-02    \\
0.499   & 9.26E-06 & 1.414E-03       & 1.414E-03       & 5.657E-03    & 5.657E-03    \\
0.5   & 9.26E-06 & 0       & 0       & 0    & 0    \\
0.501   & 9.26E-06 & 1.414E-03       & 1.414E-03       & 5.657E-03    & 5.657E-03    \\
0.502   & 9.26E-06 & 2.828E-03       & 2.828E-03       & 1.131E-02    & 1.131E-02    \\
0.503   & 9.26E-06 & 4.243E-03       & 4.243E-03       & 1.697E-02    & 1.697E-02    \\
0.504   & 9.26E-06 & 5.657E-03       & 5.657E-03       & 2.263E-02    & 2.263E-02    \\
0.505   & 9.26E-06 & 7.071E-03       & 7.071E-03       & 2.828E-02    & 2.828E-02    \\
\hline
\end{tabular}}
\end{table}

\section{Concluding remarks and future studies}\label{conclusion}
In this paper, we revisited the parametric analysis and the identification of the optimal partition for SDO problems, when the objective function is perturbed along a fixed direction. We characterized a nonlinearity interval of the optimal partition, where the rank of maximally complementary solutions remain constant, and we provided sufficient conditions for the existence of a nonlinearity interval and a transition point. Additionally, we quantified the sensitivity of the approximation of the optimal partition w.r.t. $\epsilon$ in a nonlinearity interval. Using numerical experiments, we showed how tight the bounds could be for the sensitivity of the approximation of the optimal partition.

\vspace{5px}
\noindent
The continuity and smoothness of optimal solutions on a nonlinearity interval are subjects of future studies. In particular, the limit point of the central path may not be continuous w.r.t. the perturbation of the objective function, see also~\cite{St01}. For instance, a discontinuity is caused by appending a redundant constraint $X_{12} + X_{13} \le 2$ to Example~\ref{motivation_nonlinearity}. While the analytic center of the optimal set at $\epsilon=\frac12$ is given by
\begin{align*}
X^{a}(\frac12)=\begin{pmatrix} \ \ 1& -\frac13 & -\frac13 & 0\\ -\frac13 & \ \ 1 & \ \ 1 & 0\\-\frac13 & \ \ 1 & \ \ 1 & 0 \\ \ \ 0 & \ \ 0 & \ \ 0 & \frac83 \end{pmatrix},
\end{align*}
for any sequence $\{\epsilon_k\} \to \frac12$ we have
\begin{align*}
\lim_{k \to \infty} X^{a}(\epsilon_k)=\begin{pmatrix} 1& 0 & 0 & 0\\ 0 & 1 & 1 & 0\\0 & 1 & 1 & 0 \\ 0 & 0 & 0 & 2 \end{pmatrix}.
\end{align*}

\vspace{5px}
\noindent
Currently, we are investigating theoretical and numerical methods for the computation of a nonlinearity interval.

\section*{Acknowledgements}
We are grateful to Professor Rainer Sinn for insightful discussions that resulted in Example~\ref{motivation_nonlinearity}. This work is supported by the Air Force Office of Scientific Research (AFOSR) Grant \# FA9550-15-1-0222.

\bibliographystyle{siam}   
\bibliography{optimization}
\end{document}